\DeclareMathOperator{\cn}{cn}
\DeclareMathOperator{\sn}{sn}
\DeclareMathOperator{\dn}{dn}
\newtheorem{theorem}{Theorem}
\newtheorem{lemma}[theorem]{Lemma}
\newtheorem{corollary}[theorem]{Corollary}
\newtheorem{proposition}[theorem]{Proposition}
\newtheorem{definition}[theorem]{Definition}
\newtheorem{remark}[theorem]{\bf Remark}
\def\neweq#1{\begin{equation}\label{#1}}
\def\endeq{\end{equation}}
\def\eq#1{(\ref{#1})}
\newcommand{\hs}{H^2_*(\Omega)}
\newcommand{\ho}{H^2_\mathcal{O}(\Omega)}
\newcommand{\he}{H^2_\mathcal{E}(\Omega)}
\newcommand\xit{(\xi,t)}
\newcommand\into{\int_\Omega}
\newcommand\R{\mathbb R}
\newcommand\dxit{(\xi,t)\, d\xi}
\begin{document}

\title[Nonlinear nonlocal plate equation]{Periodic solutions and torsional instability\\
in a nonlinear nonlocal plate equation}

\author[Denis BONHEURE -- Filippo GAZZOLA -- Ederson MOREIRA dos SANTOS]{Denis BONHEURE$^\flat$ -- Filippo GAZZOLA$^\sharp$ -- Ederson MOREIRA dos SANTOS$^\dag$\\
{\tiny $\flat$ D\'epartement de Math\'ematique - Universit\'e Libre de Bruxelles, Belgium}\\
{\tiny $\sharp$ Dipartimento di Matematica - Politecnico di Milano, Italy}\\
{\tiny $\dag$ Instituto de Ci\^encias Matem\'aticas e de Computa\c c\~ao - Universidade de S\~ao Paulo, Brazil}}

\date{\today}

\subjclass[2010]{35G31, 35Q74, 35B35, 35B40, 35B10, 74B20, 37C75}
\keywords{nonlinear nonlocal plate equation; periodic solutions; torsional stability.}

\begin{abstract}
A thin and narrow rectangular plate having the two short edges hinged and the two long edges free is considered. A nonlinear nonlocal evolution
equation describing the deformation of the plate is introduced: well-posedness and existence of periodic solutions are proved. The natural phase space is a particular second order Sobolev space that can be orthogonally split into two subspaces
containing, respectively, the {\em longitudinal} and the {\em torsional} movements of the plate. Sufficient conditions for the stability
of periodic solutions and of solutions having only a longitudinal component are given. A stability analysis of the so-called {\em prevailing mode} is
also performed. Some numerical experiments show that instabilities may occur.
This plate can be seen as a simplified and qualitative model for the
deck of a suspension bridge, which does not take into account the complex interactions between all the components of a real bridge.
\vspace{8pt}
\begin{center}
To Cec\' \i lia
\end{center}
\end{abstract}
\maketitle

\tableofcontents

\section{Introduction}

\listoffixmes

We consider a thin and narrow rectangular plate with the two short edges hinged while the two long edges are free.
In absence of forces, the plate lies horizontally flat and is represented by the planar domain $\Omega=(0,\pi)\times(-\ell,\ell)$ with $0<\ell\ll\pi$.
The plate is subject both to dead and live loads acting orthogonally on $\Omega$ and to compressive forces along the edges, the so-called
buckling loads. We follow the plate model suggested by Berger \cite{berger}; see also the former beam model by Woinowsky-Krieger \cite{woinowsky}
and, independently, by Burgreen \cite{burg}. Then the nonlocal evolution equation modeling the deformation of the plate reads
\begin{empheq}{align}\label{enlm}
\left\{
\begin{array}{rl}
u_{tt}+\delta u_t + \Delta^2 u + \left[P - S \int_\Omega u_x^2\right] u_{xx}=g  &\textrm{in }\Omega\times(0,T)\\
u = u_{xx}= 0 &\textrm{on }\{0,\pi\}\times[-\ell,\ell]\\
u_{yy}+\sigma u_{xx} = u_{yyy}+(2-\sigma)u_{xxy}= 0 &\textrm{on }[0,\pi]\times\{-\ell,\ell\}\\
u(x,y, 0) = u_0(x,y), \quad \quad u_t(x,y, 0) = v_0(x,y) &\textrm{in }\Omega\, .
\end{array}
\right.
\end{empheq}

All the parameters in \eqref{enlm} and their physical meaning will be discussed in detail in Section \ref{quant}.
The plate $\Omega$ can be seen as a simplified model for the deck of a suspension bridge. Even if the model does not take into account the complex
interactions between all the components of a real bridge, we expect to observe the phenomena seen on built bridges. Therefore we will often refer to
the scenario described in the engineering literature and tackle the stability issue only qualitatively.\par
A crucial role in the collapse of several bridges is played by the mode of oscillation. In particular, as shown in the video \cite{tacoma},
the ``two waves'' were torsional oscillations and were considered the main cause of the Tacoma Narrows Bridge (TNB) collapse \cite{ammann,scott}.
The very same oscillations also caused several other bridges collapses: among others, we mention the
Brighton Chain Pier in 1836, the Menai Straits Bridge in 1839, the Wheeling Suspension Bridge in 1854, the Matukituki Suspension Footbridge in
1977; see \cite[Chapter 1]{2015gazzola} for a detailed description of these collapses. The distinguished civil and aeronautical engineer Robert
Scanlan \cite[p.209]{scanlan} attributed the appearance of torsional oscillations at the TNB to {\em some fortuitous condition}: the word ``fortuitous'' denotes a lack of rigorous explanations and, according to \cite{scott}, no fully satisfactory explanation has been reached in subsequent years.
In fact, no purely aerodynamic explanation was able to justify the origin of the torsional oscillation, which is the main culprit for the collapse of the TNB.
More recently \cite{arioligazzola,bfg1}, for slightly different bridge models, the attention was put on the nonlinear structural behavior of suspension bridges:
the bridge was considered isolated from aerodynamic effects and dissipation. The results therein show that the origin of torsional instability is structural
and explain why the TNB withstood larger longitudinal oscillations on low modes, but failed for smaller longitudinal oscillations on higher modes,
see \cite[pp.28-31]{ammann}. It is shown in \cite{arioligazzola,bfg1} that if the longitudinal oscillation is sufficiently large, then a structural
instability appears and this is the onset of torsional oscillations. In these papers, the main focus was on the structural behavior and the
action of the wind was missing.\par
From the Report \cite[pp.118-120]{ammann} we learn that for the recorded oscillations at the TNB {\em one definite mode of oscillation
prevailed over a certain interval of time. However, the modes frequently changed}. The suggested target was to find a possible {\em correlation
between the wind velocity and the prevailing mode} and the conclusion was that {\em a definite correlation exists between frequencies and wind
velocities: higher velocities favor modes with higher frequencies}. On the other hand, just a few days prior to the TNB collapse, the project engineer
L.R.\ Durkee wrote a letter (see \cite[p.28]{ammann}) describing the oscillations which were so far observed at the TNB. He wrote: {\em Altogether,
seven different motions have been definitely identified on the main span of the bridge, and likewise duplicated on the model. These different wave
actions consist of motions from the simplest, that of no nodes, to the most complex, that of seven nodes}. According to Eldridge \cite[V-3]{ammann},
a witness on the day of the TNB collapse, {\em the bridge appeared to be behaving in the customary manner} and the motions {\em were considerably less
than had occurred many times before}. Moreover, also Farquharson \cite[V-10]{ammann} witnessed the collapse and wrote that {\em the motions, which a moment
before had involved a number of waves (nine or ten) had shifted almost instantly to two}.\par
Aiming to explain and possibly reproduce these phenomena, we proceed as follows.
Firstly, we introduce in \eqref{enlm} the aerodynamic and dissipative effects, thereby completing the isolated models in \cite{arioligazzola,bfg1}. Then
we try to give answers to the questions left open by the above discussion:
\begin{enumerate}[(a)]
\item What is the correlation between the wind velocity and the prevailing  mode of oscillation?
\item How stable is the prevailing longitudinal mode with respect to torsional perturbations?
\end{enumerate}

To this end, the first step is to go through fine properties of the vibrating modes, both by estimating their frequencies (eigenvalues of a
suitable problem) and by classifying them into {\em longitudinal} and {\em torsional}. This is done in Section \ref{section:eigenvalue} where
we also decompose the phase space of \eqref{enlm} as direct sum of the orthogonal subspaces of {\em longitudinal functions} and of
{\em torsional functions}. Theorems \ref{exuniq} and \ref{th:peridoc} show that \eqref{enlm} is well-posed and that the
equation admits periodic solutions whenever the source $g$ is itself periodic. These solutions play an important role in our stability analysis
which is characterized in Definition \ref{defstab}: roughly speaking, we say that \eqref{enlm} is {\em torsionally stable} if the torsional part
of any solution tends to vanish at infinity and {\em torsionally unstable} otherwise, see also Proposition \ref{NSC}. In Theorem \ref{SCstability}
we establish that if the forcing term $g$ is sufficiently small, then \eqref{enlm} has a ``squeezing property'' as in
\cite[(2.7)]{ghisigobbinoharaux}, namely all its solutions have the
same behavior as $t\to\infty$. This enables us to prove both the uniqueness of a periodic solution (if $g$ is periodic) and to obtain a sufficient
condition for the torsional stability (if $g$ is even with respect to $y$). By exploiting an argument by Souplet \cite{souplet}, in Theorem
\ref{th:multiple-periodic-solutions} we show that this smallness condition is ``almost necessary'' since multiple periodic solutions may exist
in general. Theorem \ref{SCstability2} states a similar property, but more related to applications: we obtain torsional stability for any given
force $g$, provided that the damping coefficient $\delta$ is sufficiently large. Finally, in Theorem \ref{smalltorsion} we show that the responsible
for possible instabilities is the nonlinear nonlocal term $\|u_x\|_{L^2}^2u_{xx}$ which acts a coupling term and allows transfer of energy between
longitudinal and torsional oscillations.\par
Our results are complemented with some numerics aiming to describe the behavior of the solutions of \eqref{enlm} and to discuss the just
mentioned sufficient conditions for the torsional stability. Overall, the numerical results, combined with our theorems, allow to answer to
question (b): the stability of the prevailing longitudinal mode depends on its amplitude of oscillation, on its frequency of oscillation,
and on the torsional mode that perturbs the motion. In order to answer to question (a), in Section \ref{linearanal} we perform a
linear analysis. Our conclusion is that the prevailing mode is determined by the frequency of the forcing term $g$: there exist ranges of
frequencies for $g$, each one of them exciting a particular longitudinal mode which then plays the role of the prevailing mode.\par
This paper is organized as follows. In order to have physically meaningful results, in Section \ref{quant} we describe in detail the model and the physical
meaning of all the parameters in \eqref{enlm}. In Section \ref{section:eigenvalue}
we recall and improve some results about the spectrum of the linear elliptic operator in \eqref{enlm}. In Section \ref{main} we state our main
results. These results are complemented with the linear analysis of Section \ref{linearanal}, that enables us to answer to question (a),
and with the numerical experiments reported in Section \ref{numres}, that enable us to answer to question (b). Section \ref{enest} contains
some energy bounds, useful for the proofs of our results that are contained in the remaining sections, from \ref{proof1} to \ref{proof3}.

\section{The physical model}\label{quant}

In this section we perform space and time scalings that will reduce the dimensional equation
\begin{equation}\label{original}
M\, u_{tt}(\xi,t)+\varepsilon u_t(\xi,t)+D\, \Delta^2 u(\xi,t)+\left[P-\frac{AE}{2L}\int_\Omega u_x^2(z,t)dz\right]u_{xx}(\xi,t)=g(\xi,t)
\quad\mbox{in }\Omega\times(0,T)
\end{equation}
to the slightly simpler form \eqref{enlm}. Here and in the sequel, for simplicity we put
$$\xi:=(x,y)\in\Omega\, .$$

Let us explain the meaning of the structural constants appearing in \eqref{original}:\par\smallskip
$\Omega=(0,L)\times(-\ell,\ell)$ = the horizontal face of the rectangular plate\par
$L$ = length of the plate\par
$2\ell$ = width of the plate\par
$d$ = thickness of the plate\par
$H$ = frontal dimension (the height of the windward face of the plate)\par
$A=2\ell d$ = cross-sectional area of the plate\par
$\sigma$ = Poisson ratio of the material composing the plate\par
$D$ = flexural rigidity of the plate (the force couple required to bend it in one unit of curvature)\par
$M$ = surface density of mass of the plate\par
$P$ = prestressing constant (see \cite{burg})\par
$\varepsilon$ = damping coefficient\par\smallskip
If the plate is a perfect rectangular parallelepiped, that is, $(0,L)\times(-\ell,\ell)\times(0,d)$ with constant height $d$, then $H=d$. But in some cases,
such as for the collapsed TNB, the cross section of the plate is H-shaped: in these cases one has $H>d$. In many instances of
fluid-structure interaction the deck of a bridge is modeled as a Kirchhoff-Love plate and a 3D object is reduced to a 2D plate. Indeed, since the
thickness $d$ is constant, it may be considered as a rigidity parameter and one can focus the attention on the middle horizontal cross section $\Omega$
(the intersection of the parallelepiped with the plane $z=d/2$):
$$
\Omega=(0,L)\times(-\ell,\ell)\subset\mathbb{R}^2\, .
$$
This is physically justifiable as long as the vertical displacements remain in a certain range that usually covers the displacements of the deck.
The deflections of this plate are described by the function $u=u(x,y,t)$ with $(x,y)\in\Omega$.
The parameter $P$ is the buckling constant: one has $P>0$ if the plate is compressed and $P<0$ if the plate is stretched in the $x$-direction.
Indeed, for a partially hinged plate such as $\Omega$, the buckling load only acts in the $x$-direction and therefore one obtains the term $\int_\Omega u_x^2$
as for a one-dimensional beam; see \cite{knightly}. The Poisson ratio of metals lies around $0.3$ while for concrete it is between $0.1$ and $0.2$;
since the deck of a bridge is a mixture of metal and concrete we take
\begin{equation}\label{conditionsigma}
\sigma=0.2\, .
\end{equation}
The flexural rigidity $D$ is the resistance offered by the structure while bending, see e.g.\ \cite[Section 2.3]{ventsel}.
A reasonable value for the damping coefficient $\varepsilon>0$ has to be fixed. It is clear that large $\varepsilon$ make the solution of an equation converge
more quickly to its limit behavior and that smaller $\varepsilon$ may lead to solutions which have many oscillations around their limit behavior before
stabilizing close to it. Our choice of $\varepsilon$ is motivated by the following argument. Imagine that we focus on a time instant, that we shift to $t=0$,
where a certain mode is excited with a given amplitude and that, in this precise instant, the wind ceases to blow. The mode will tend asymptotically
(as $t\to\infty$) to
rest; although it will never reach the rest position, we aim at quantifying how much time is needed to reach an ``approximated rest position''. This means that
we estimate the time needed for the oscillations to become considerably smaller than the initial ones. A reasonable measure seems to be 100 times less,
that is, 1cm if the bridge was initially oscillating with an amplitude of 1m. De Miranda \cite{mdm} told us that a heavy oscillating structure like a bridge
is able to reduce the oscillation to 1\% of its initial amplitude in about 40 seconds. Since the oscillations tend to become small, we can linearize
and reduce to the prototype equation
$$M\ddot z(t)+\varepsilon\dot z(t)+\alpha z(t)=0\qquad(\mbox{with $\varepsilon<2\sqrt{\alpha M}$})$$
whose solutions are linear combinations of $z_1(t)=e^{-\varepsilon t/2M}\cos(\chi t)$ and $z_2(t)=e^{-\varepsilon t/2M}\sin(\chi t)$, where
$\chi=\sqrt{4\alpha M-\varepsilon^2}/2M$. The upper bound for $\varepsilon$ is justified by the fact that a bridge reaches its equilibrium with oscillations and not
monotonically as would occur if $\varepsilon$ overcomes the bound. The question now reduces to: which $\varepsilon>0$ yields solutions of this problem having amplitudes
of oscillations equal to $1/100$ of the initial amplitude after a time $t=40\, s$? Therefore, we need to solve the equation $e^{-20\varepsilon/M}=1/100$ which gives
\begin{empheq}{align}\label{delta}
\varepsilon=\frac{M\log 100}{20}=0.23 M.
\end{empheq}
We emphasize that this value is {\em independent of} $\alpha>0$, that only plays a role in the upper bound for $\varepsilon$.\par
Next, we turn our attention to the aerodynamic parameters:\par\smallskip
$\rho$ = air density\par
$W$ = scalar velocity of the wind blowing on the plate\par
$C_L$ = aerodynamic coefficient of lift\par
St = Strouhal number\par
$AE/2L$ = coefficient of nonlinear stretching (see formula (1) in \cite{burg}).\par\smallskip
The function $g:\Omega \times [0,T]\rightarrow\mathbb{R}$ represents the vertical load over the plate and may depend on space and time.
In bridges, the vertical loads can be either pedestrians, vehicles, or the vortex shedding due to the wind: we focus our attention on the latter.
In absence of wind and external loads, the deck of a bridge remains still; when the wind hits the deck (a bluff body) the flow is modified and goes around
the deck, this creates alternating low-pressure vortices on the downstream side of the deck which then tends to move towards the low-pressure zone.
Therefore, behind the deck, the flow creates vortices which are, in general, asymmetric. This asymmetry generates a forcing lift which starts the
vertical oscillations of the deck. This forcing lift is described by $g$.
We asked to Mario De Miranda, a worldwide renowned civil engineer from the Consulting Engineering Firm {\em De Miranda Associati} \cite{dma}, to
describe the force due to the vortex shedding. He told us that they are usually modeled following the {\em European Eurocode 1} \cite{eurocode} and he
suggested \cite{mdm} that a simplified but quite accurate forcing term due to the vortex shedding may be found as follows. One can assume that it does not
depend on the position $\xi$ nor on the motion of the structure and that it acts only on the vertical component of the motion. Moreover, it varies periodically
with the same law governing the vortex shedding, that is,
\begin{equation}\label{deMiranda}
g(t)=\frac\rho2  W^2 \frac{H}{2\ell} C_L\sin(\omega t)
\end{equation}
with $\omega={\rm St}W/H$,  see respectively \cite[(8.2)]{eurocode}, \cite{Giosan} and \cite[(2)]{billah}. The Strouhal number St is a dimensionless
number describing oscillating flow mechanisms, see for instance \cite[p.120]{billah} and \cite[Figure E.1]{eurocode}:
it depends on the shape and measures of the cross-section of the deck.\par
By a convenient change of scales, \eqref{original} reduces to \eqref{enlm} with
\begin{equation}\label{coefficients}
\delta=\frac{L^2}{\pi^2}\frac{\varepsilon}{\sqrt{D\cdot M}}\  \text{ and }\ S=\frac{A\, E\, L}{2D\pi^2}.
\end{equation}
Therefore, $S>0$ depends on the elasticity of the material composing the plate and $S\int_\Omega u_x^2$ measures the
geometric nonlinearity of the plate due to its stretching.\par
The scaled edges of the plate now measure
\begin{equation}\label{new-dim}
L'=\pi,\quad H' = \frac\pi L H,\quad \ell' = \frac\pi L \ell,
\end{equation}
whereas, from \eqref{deMiranda} and the new time and space scales, the forcing term (still denoted by $g$ for simplicity) can be taken as
\begin{equation}\label{newf}
g(t) = W^2 \sin(\omega' t),
\end{equation}
where $\omega' = \sqrt{\frac{M}{D}}\frac{L^2}{\pi^2}\omega$.\par
The parameter $P>0$ has not been modified while going from \eqref{original} to \eqref{enlm} because it
represents prestressing and the exact value is not really important. One just needs to know that
it usually belongs to the interval $[0,\lambda_2)$ (we will in fact always assume that $0\le P<\lambda_1$), where $\lambda_{1}$ and $\lambda_{2}$ are the first and the second eigenvalues of the linear stationary operator, see \eqref{eq:eigenvalueH2L2} below.\par
The functions $u_0$ and $v_0$ are, respectively, the initial position and velocity of the plate.
The boundary conditions on the short edges are named after Navier \cite{navier} and model the fact that the plate is hinged;
note that $u_{xx}=\Delta u$ on $\{0,\pi\}\times(-\ell,\ell)$. The boundary conditions on the long edges model the fact that the plate is free;
they may be derived with an integration by parts as in \cite{mansfield,ventsel}. We refer to \cite{2014al-gwaizNATMA,2015ferreroDCDSA} for the derivation
of \eqref{enlm}, to the recent monograph \cite{2015gazzola} for the complete updated story, and to \cite{villaggio} for a classical reference on models
from elasticity. The behavior of rectangular plates subject to a variety of boundary conditions is studied in \cite{braess,grunau,gruswe,gruswe2,sweers}. Finally, we mention that equations of the kind of \eqref{enlm} (but with with a slightly different structure) have been considered in
\cite{ghisigobbinoharaux}, with the purpose of analyzing the stability of stationary solutions.

\section{Longitudinal and torsional eigenfunctions} \label{section:eigenvalue}

Throughout this paper we deal with the functional space
\begin{empheq}{align}
H^{2}_*(\Omega)=\{U\in H^2(\Omega); \ U=0 \ \textrm{on }\{0,\pi\}\times[-\ell,\ell]\}\,,
\end{empheq}
and with its dual space $(H^{2}_*(\Omega))'$.
We use the angle brackets $\langle{\cdot, \cdot}\rangle$ to denote the duality of $(H^{2}_*(\Omega))'\times H^{2}_*(\Omega)$, $({\cdot, \cdot})_{L^2}$
for the inner product in $L^2(\Omega)$ with the corresponding norm $\|{ \cdot }\|_{L^2}$, $({\cdot \, , \cdot})_{H^2_*}$ for the inner
product in $H^{2}_*(\Omega)$ defined by
\begin{empheq}{align}(U,V)_{H^2_*}\!=\!\int_{\Omega}\left( \Delta U \Delta V\!-\!(1\!-\!\sigma)\big(U_{xx}V_{yy}\!+\!U_{yy}V_{xx}\!-\!2U_{xy}V_{xy}
\big) \right), \quad U,V\in H^{2}_*(\Omega)\,.
\end{empheq}
Since $\sigma\in(0,1)$, see \eqref{conditionsigma}, this inner product defines a norm which makes $H^{2}_*(\Omega)$ a Hilbert space; see
\cite[Lemma 4.1]{2015ferreroDCDSA}.\par
Our first purpose is to introduce a suitable basis of $H^{2}_*(\Omega)$ and to define what we mean by vibrating modes of \eqref{enlm}. To this end, we consider
the eigenvalue problem
\begin{equation}\label{eq:eigenvalueH2L2}
\left\{
\begin{array}{rl}
\Delta^2 w = \lambda w& \textrm{in }\Omega \\
w = w_{xx} = 0& \textrm{on }\{0,\pi\}\times[-\ell,\ell]\\
w_{yy}+\sigma w_{xx} = 0& \textrm{on }[0,\pi]\times\{-\ell,\ell\}\\
w_{yyy}+(2-\sigma)w_{xxy} = 0& \textrm{on }[0,\pi]\times\{-\ell,\ell\}
\end{array}
\right.
\end{equation}
which can be rewritten as $(w,z)_{H^2_*}=\lambda (w,z)_{L^2}$ for all $z\in H^{2}_*(\Omega)$. By combining results in
\cite{bebuga,bfg1,2015ferreroDCDSA}, we obtain the following statement.

\begin{proposition}\label{spectrum}
The set of eigenvalues of \eqref{eq:eigenvalueH2L2} may be ordered in an increasing sequence of strictly positive numbers diverging to $+\infty$ and
any eigenfunction belongs to $C^\infty(\overline\Omega)$. The set of eigenfunctions of \eqref{eq:eigenvalueH2L2} is a complete system in $H^2_*(\Omega)$.
Moreover:\par\noindent
$(i)$ for any $m\ge1$, there exists a unique eigenvalue $\lambda=\mu_{m,1}\in((1-\sigma^2)m^4,m^4)$ with corresponding eigenfunction
$$\left[\big[\mu_{m,1}^{1/2}-(1-\sigma)m^2\big]\, \tfrac{\cosh\Big(y\sqrt{m^2+\mu_{m,1}^{1/2}}\Big)}{\cosh\Big(\ell\sqrt{m^2+\mu_{m,1}^{1/2}}\Big)}+
\big[\mu_{m,1}^{1/2}+(1-\sigma)m^2\big]\, \tfrac{\cosh\Big(y\sqrt{m^2-\mu_{m,1}^{1/2}}\Big)}{\cosh\Big(\ell\sqrt{m^2-\mu_{m,1}^{1/2}}\Big)}\right]\sin(mx)\, ;$$
$(ii)$ for any $m\ge1$ and any $k\ge2$ there exists a unique eigenvalue $\lambda=\mu_{m,k}>m^4$ satisfying\par
$\left(m^2+\frac{\pi^2}{\ell^2}\left(k-\frac{3}{2}\right)^2\right)^2<\mu_{m,k}<\left(m^2+\frac{\pi^2}{\ell^2}\left(k-1\right)^2\right)^2$
and with corresponding eigenfunction
$$
\left[\big[\mu_{m,k}^{1/2}-(1-\sigma)m^2\big]\, \tfrac{\cosh\Big(y\sqrt{\mu_{m,k}^{1/2}+m^2}\Big)}{\cosh\Big(\ell\sqrt{\mu_{m,k}^{1/2}+m^2}\Big)}
+\big[\mu_{m,k}^{1/2}+(1-\sigma)m^2\big]\, \tfrac{\cos\Big(y\sqrt{\mu_{m,k}^{1/2}-m^2}\Big)}{\cos\Big(\ell\sqrt{\mu_{m,k}^{1/2}-m^2}\Big)}\right]\sin(mx)\, ;
$$
$(iii)$ for any $m\ge1$ and any $k\ge2$ there exists a unique eigenvalue $\lambda=\nu_{m,k}>m^4$ with corresponding eigenfunctions
$$
\left[\big[\nu_{m,k}^{1/2}-(1-\sigma)m^2\big]\, \tfrac{\sinh\Big(y\sqrt{\nu_{m,k}^{1/2}+m^2}\Big)}{\sinh\Big(\ell\sqrt{\nu_{m,k}^{1/2}+m^2}\Big)}
+\big[\nu_{m,k}^{1/2}+(1-\sigma)m^2\big]\, \tfrac{\sin\Big(y\sqrt{\nu_{m,k}^{1/2}-m^2}\Big)}{\sin\Big(\ell\sqrt{\nu_{m,k}^{1/2}-m^2}\Big)}\right]\sin(mx)\, ;
$$
$(iv)$ for any $m\ge1$ satisfying $\tanh(\sqrt{2}m\ell)<\left(\tfrac{\sigma}{2-\sigma}\right)^2\sqrt{2}m\ell$ there exists a unique
eigenvalue $\lambda=\nu_{m,1}\in(\mu_{m,1},m^4)$ with corresponding eigenfunction
$$\left[\big[\nu_{m,1}^{1/2}-(1-\sigma)m^2\big]\, \tfrac{\sinh\Big(y\sqrt{m^2+\nu_{m,1}^{1/2}}\Big)}{\sinh\Big(\ell\sqrt{m^2+\nu_{m,1}^{1/2}}\Big)}
+\big[\nu_{m,1}^{1/2}+(1-\sigma)m^2\big]\, \tfrac{\sinh\Big(y\sqrt{m^2-\nu_{m,1}^{1/2}}\Big)}{\sinh\Big(\ell\sqrt{m^2-\nu_{m,1}^{1/2}}\Big)}\right]
\sin(mx)\, .$$
\end{proposition}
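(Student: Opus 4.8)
\emph{Strategy.} The plan is to combine abstract spectral theory with an explicit separation of variables, assembling the quantitative facts from \cite{bebuga,bfg1,2015ferreroDCDSA}.

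\emph{Abstract part.} The form $(\cdot,\cdot)_{H^2_*}$ is symmetric, continuous and, by \cite[Lemma 4.1]{2015ferreroDCDSA}, coercive on $H^2_*(\Omega)$, while the embedding $H^2_*(\Omega)\hookrightarrow L^2(\Omega)$ is compact. Hence the operator associated with \eqref{eq:eigenvalueH2L2} is self-adjoint, positive, with compact resolvent, so its spectrum is a nondecreasing sequence of strictly positive eigenvalues diverging to $+\infty$, with eigenfunctions forming a complete orthogonal system both in $L^2(\Omega)$ and in $H^2_*(\Omega)$. The regularity $w\in C^\infty(\overline\Omega)$ follows from elliptic regularity for $\Delta^2$ with the boundary conditions in \eqref{eq:eigenvalueH2L2} (which satisfy the complementing condition), and is in any case evident from the closed forms obtained below.

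\emph{Reduction to ODEs.} Since the Navier conditions on $\{0,\pi\}\times[-\ell,\ell]$ force the expansion of any eigenfunction in the basis $\{\sin(mx)\}_{m\ge1}$, writing $w(x,y)=\varphi_m(y)\sin(mx)$ reduces \eqref{eq:eigenvalueH2L2} to the ODE $\varphi_m''''-2m^2\varphi_m''+(m^4-\lambda)\varphi_m=0$ on $(-\ell,\ell)$ together with $\varphi_m''(\pm\ell)-\sigma m^2\varphi_m(\pm\ell)=0$ and $\varphi_m'''(\pm\ell)-(2-\sigma)m^2\varphi_m'(\pm\ell)=0$. The domain and these conditions are invariant under $y\mapsto-y$, so it suffices to look separately for even solutions $\varphi_m$, which will produce the eigenfunctions of $(i)$–$(ii)$, and odd ones, producing those of $(iii)$–$(iv)$. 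The characteristic roots solve $(s^2-m^2)^2=\lambda$, i.e.\ $s=\pm\sqrt{m^2+\sqrt\lambda}$ (always real) and $s=\pm\sqrt{m^2-\sqrt\lambda}$ — real when $\lambda<m^4$, purely imaginary when $\lambda>m^4$, and a double zero (together with $\pm\sqrt2\,m$) when $\lambda=m^4$. This dichotomy is exactly what yields the $\cosh/\cosh$ and $\sinh/\sinh$ combinations for $\lambda<m^4$ and the $\cosh/\cos$ and $\sinh/\sin$ combinations for $\lambda>m^4$ appearing in the statement.

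\emph{Transcendental equations and localisation of the roots.} Inserting the resulting two–parameter even (resp.\ odd) solution into the pair of conditions at $y=\ell$ gives a homogeneous $2\times 2$ system; a nontrivial $\varphi_m$ exists iff its determinant vanishes, a transcendental equation $F_m(\lambda)=0$. When $\lambda>m^4$, $F_m$ carries a factor $\cos(\ell\sqrt{\sqrt\lambda-m^2})$ (even case) or $\sin(\ell\sqrt{\sqrt\lambda-m^2})$ (odd case) and therefore oscillates; examining the sign of $F_m$ at $\sqrt\lambda-m^2=\tfrac{\pi^2}{\ell^2}(k-\tfrac32)^2$ and at $\sqrt\lambda-m^2=\tfrac{\pi^2}{\ell^2}(k-1)^2$ and applying the intermediate value theorem yields exactly one root in each such interval for $k\ge2$, namely $\mu_{m,k}$ (resp.\ $\nu_{m,k}$); moreover on these intervals $\ell\sqrt{\sqrt\lambda-m^2}$ stays away from the zeros of the corresponding trigonometric factor, so the displayed eigenfunctions are well defined. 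These intervals together with $\{\,0<\lambda<m^4\,\}$ cover all admissible $\lambda$, hence the list is complete. When $\lambda<m^4$ one checks that $F_m$ is monotone enough to have a unique zero $\mu_{m,1}$ in $\big((1-\sigma^2)m^4,m^4\big)$ in the even case — the lower bound being precisely what makes $\mu_{m,1}^{1/2}-(1-\sigma)m^2>0$, hence the corresponding eigenfunction longitudinal — whereas in the odd case the analogous zero $\nu_{m,1}\in(\mu_{m,1},m^4)$ exists and is unique if and only if $\tanh(\sqrt2\,m\ell)<\big(\tfrac{\sigma}{2-\sigma}\big)^2\sqrt2\,m\ell$; the value $\lambda=m^4$ is excluded by substituting the corresponding solution $A\cosh(\sqrt2\,my)+B$ (even) or $A\sinh(\sqrt2\,my)+By$ (odd) into the boundary conditions. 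Finally, solving the rank-one system for the ratio of the two coefficients puts each eigenfunction into the normalised form of the statement.

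\emph{Main obstacle.} The abstract part and the separation of variables are routine; the work lies in the quantitative study of the transcendental equations — bracketing every root inside the narrow intervals stated, proving \emph{completeness} (no eigenvalue escapes the bracketing, so the $\mu_{m,k}$ and $\nu_{m,k}$ are really all of them), and isolating the exact threshold governing the existence of $\nu_{m,1}$ in $(iv)$. This is the step that rests on the detailed computations in \cite{bebuga,bfg1,2015ferreroDCDSA}.
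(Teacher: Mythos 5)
Your outline follows exactly the route of the sources the paper itself relies on (the paper offers no proof of Proposition \ref{spectrum}, only the citation to \cite{bebuga,bfg1,2015ferreroDCDSA}): abstract spectral theory for the discreteness and completeness of the spectrum, separation of variables in $\sin(mx)$, and bracketing of the roots of the resulting transcendental determinant equations, so the approach matches and the strategy is sound. One small inaccuracy: your claim that ``the value $\lambda=m^4$ is excluded by substituting the corresponding solution into the boundary conditions'' (and hence that the list is complete) is not quite right --- the substitution shows $\lambda=m^4$ \emph{is} an eigenvalue precisely when $\tanh(\sqrt2\,m\ell)=\big(\tfrac{\sigma}{2-\sigma}\big)^2\sqrt2\,m\ell$, which is the exceptional case \eqref{never} the paper discusses immediately after the proposition; this does not affect the proposition's actual claims (which concern eigenvalues in intervals avoiding $m^4$), but your completeness assertion should carry that caveat.
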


In fact, if the unique positive solution $s>0$ of the equation
\begin{empheq}{align}\label{never}
\tanh(\sqrt{2}s\ell)=\left(\frac{\sigma}{2-\sigma}\right)^2\, \sqrt{2}s\ell
\end{empheq}
is not an integer, then the only eigenvalues and eigenfunctions are the ones given in Proposition \ref{spectrum}.
Condition \eqref{never} has probability 0 to occur in general plates; if it occurs, there is an additional eigenvalue and eigenfunction,
see \cite{2015ferreroDCDSA}. In particular, if we assume \eqref{lsigma}, then \eqref{never} is not satisfied and no eigenvalues of \eqref{eq:eigenvalueH2L2}
other than $(i)-(ii)-(iii)-(iv)$ exist.

\begin{remark}
The nodal regions of the eigenfunctions found in Proposition \ref{spectrum} all have a rectangular shape.
The indexes $m$ and $k$ quantify the number of nodal regions of the eigenfunction in the $x$ and $y$ directions. More precisely,
$\mu_{m,k}$ is associated to a longitudinal eigenfunction having $m$ nodal regions in the $x$-direction and $2k-1$ nodal regions in the $y$-direction
whereas $\nu_{m,k}$ is associated to a torsional eigenfunction having $m$ nodal regions in the $x$-direction and $2k$ nodal regions in the $y$-direction.
Hence, the only positive eigenfunction (having only one nodal region in both directions) is associated to $\mu_{1,1}$.
\end{remark}

From \cite{FerGazMor} we know that the least eigenvalue $\lambda_1$ of \eqref{eq:eigenvalueH2L2} satisfies
$$
\lambda_1:=\mu_{1,1}\, =\, \min_{v\in H^2_*}\ \frac{\|v\|_{H^2_*}^2}{\|v_x\|_{L^2}^2}\, =\, \min_{v\in H^2_*}\ \frac{\|v\|_{H^2_*}^2}{\|v\|_{L^2}^2}\qquad
\mbox{and}\qquad\min_{v\in H^2_*}\ \frac{\|v_x\|_{L^2}^2}{\|v\|_{L^2}^2}=1\, .
$$
These three identities yield the following embedding inequalities:
\begin{equation}\label{embedding}
\|v\|_{L^2}^2\le\|v_x\|_{L^2}^2\, ,\quad\lambda_1\|v\|_{L^2}^2\le\|v\|_{H^2_*}^2\, ,\quad\lambda_1\|v_x\|_{L^2}^2\le\|v\|_{H^2_*}^2\qquad
\forall v\in H^2_*(\Omega)\, .
\end{equation}

Proposition \ref{spectrum} states that for any $m\ge1$ there exists a divergent sequence of eigenvalues (as $i\to\infty$, including both $\mu_{m,i}$
and $\nu_{m,i}$) with corresponding eigenfunctions
\begin{equation}\label{eigenfunction}
w_{m,i} (x,y)= \varphi_{m,i}(y)\sin(mx)\, , \ \ m,i\in{\mathbb{N}}\, .
\end{equation}
The functions $\varphi_{m,i}$ are linear combinations of hyperbolic and trigonometric sines and cosines, being either even or odd with respect
to $y$.

Observe that if $w_{1,1}$ is $L^2$-normalized, then we also have $\|(w_{1,1})_{xx}\|_{L^2}^2=1$ and  $\|w_{1,1}\|_{H^2_*}^2=\lambda_1$, whereas for every
$v\in H^2_*(\Omega)$, we have
\begin{empheq}{align}
\|v\|_{H^2_*}^2=\int_{\Omega}\left( v_{xx}^2+v_{yy}^2+2(1-\sigma)v_{xy}^2  +2\sigma v_{xx}v_{yy}\right)\ge (1-\sigma^2) \int_{\Omega} v_{xx}^2.
\end{empheq}
This shows that the inequality
\begin{equation}\label{embedding-H^2_*-u_xx}
\gamma \|v_{xx}\|_{L^2}^2\le\|v\|_{H^2_*}^2\qquad\forall v\in H^2_*(\Omega)
\end{equation}
holds for some optimal constant $\gamma\in [1-\sigma^2,\lambda_1]$.

\begin{definition}[Longitudinal/torsional eigenfunctions]\label{df:longtors}
If $\varphi_{m,i}$ is even we say that the eigenfunction \eqref{eigenfunction} is longitudinal while if $\varphi_{m,i}$ is odd we say that the
eigenfunction \eqref{eigenfunction} is torsional.
\end{definition}

Let us now explain how the eigenfunctions of \eqref{eq:eigenvalueH2L2} enter in the stability analysis of \eqref{enlm}.
We approximate the solution of \eqref{enlm} through its decomposition in Fourier components. The numerical results obtained
in \cite{FerGazMor} suggest to restrict the attention to the lower eigenvalues. In order to select a reasonable number of
low eigenvalues, let us exploit what was seen at the TNB.
The already mentioned description by Farquharson \cite[V-10]{ammann} ({\em the motions, which a moment before had involved a number
of waves (nine or ten) had shifted almost instantly to two}) shows that an instability occurred and changed the motion of the deck from the ninth or
tenth longitudinal mode to the second torsional mode. In fact, Smith-Vincent \cite[p.21]{tac2} state that this shape of torsional oscillations is the only
possible one, see also \cite[Section 1.6]{2015gazzola} for further evidence and more historical facts. Therefore, the relevant eigenvalues corresponding
to oscillations visible in actual bridges should include (at least!) ten longitudinal modes and two torsional modes.\par
Following Section \ref{quant} and the measures of the TNB, we take $\Omega = (0, \pi) \times (-\ell, \ell)$ with
\begin{empheq}{align}\label{lsigma}
\ell=\frac{\pi}{150}\ ,\quad\sigma=0.2\, .
\end{empheq}
Let us determine the least 20 eigenvalues of \eqref{eq:eigenvalueH2L2} when \eqref{lsigma} holds.
In this case, from $(ii)$ and $(i)$ we learn that
\begin{empheq}{align}\label{learn1}
\mu_{m,k}\ge\mu_{1,2}>\left(1+\frac{\pi^2}{4\ell^2}\right)^2=(1+75^2)^2>75^4>\mu_{75,1}\qquad\forall m\ge1\, ,\ k\ge2\, ,
\end{empheq}
so that the least $75$ longitudinal eigenvalues are all of the kind $(i)$. Therefore,
\begin{center}
{\em no eigenvalues of the kind $\mu_{m,2}$ in $(ii)$ are among the least 20.}
\end{center}
Concerning the torsional eigenvalues, we need to distinguish two cases. If
$$
\tanh(\sqrt{2}m\ell)<\left(\tfrac{\sigma}{2-\sigma}\right)^2\, \sqrt{2}m\ell\,,
$$
then from \eqref{lsigma} we infer that $m\ge2\,734$ so that, by $(iv)$,
\begin{empheq}{align}\label{learn2}
\nu_{m,1}\ge\nu_{2734,1}>\mu_{2734,1}\qquad\mbox{for all $m\ge2\,734$}\, .
\end{empheq}
Therefore,
\begin{center}
{\em no eigenvalues of the kind $\nu_{m,1}$ in $(iv)$ are among the least 20.}
\end{center}
If
\begin{empheq}{align}\label{iff1}
\tanh(\sqrt{2}m\ell)>\left(\tfrac{\sigma}{2-\sigma}\right)^2\, \sqrt{2}m\ell\, ,
\end{empheq}
then the torsional eigenfunction $w_{m,k}$ with $k\ge2$ is given by $(iii)$ and from \cite{2015ferreroDCDSA} we know that the associated eigenvalue
$\nu_{m,k}$ are the solutions $\lambda>m^4$ of the equation
$$
\sqrt{\lambda^{1/2}\!-\!m^2}\big(\lambda^{1/2}\!+\!(1\!-\!\sigma)m^2\big)^2\tanh(\ell\sqrt{\lambda^{1/2}\!+\!m^2})\!=
\!\sqrt{\lambda^{1/2}\!+\!m^2}\big(\lambda^{1/2}\!-\!(1\!-\!\sigma)m^2\big)^2\tan(\ell\sqrt{\lambda^{1/2}\!-\!m^2})\, .
$$
Put $s=\lambda^{1/2}$ and, related to this equation, consider the function
\begin{eqnarray*}
Z(s) &:=& \sqrt{s^2\!-\!m^4}\big[s\!-\!(1\!-\!\sigma)m^2\big]^2
\left\{\left(\frac{s\!+\!(1\!-\!\sigma)m^2}{s\!-\!(1\!-\!\sigma)m^2}\right)^2\, \frac{\tanh(\ell\sqrt{s\!+\!m^2})}{\sqrt{s\!+\!m^2}}\!-
\frac{\tan(\ell\sqrt{s\!-\!m^2})}{\sqrt{s\!-\!m^2}}\right\}\\
\ &=:& \sqrt{s^2\!-\!m^4}\big[s\!-\!(1\!-\!\sigma)m^2\big]^2\, Z(s)\, .
\end{eqnarray*}
In each of the subintervals of definition for $Z$ (and $s>m^2$), the maps $s\mapsto\frac{s\!+\!(1\!-\!\sigma)m^2}{s\!-\!(1\!-\!\sigma)m^2}$, $s\mapsto\frac{\tanh(\ell\sqrt{s\!+\!m^2})}{\sqrt{s\!+\!m^2}}$, and $s\mapsto-\frac{\tan(\ell\sqrt{s\!-\!m^2})}{\sqrt{s\!-\!m^2}}$
are strictly decreasing, the first two being also positive. Since, by \eqref{iff1},
$\lim_{s\to m^2}Z(s)=\left(\frac{2-\sigma}\sigma \right)^2\, \frac{\tanh(\sqrt2 \, \ell m)}{\sqrt2 \, m}-\ell>0$, the function $Z$ starts positive,
ends up negative and it is strictly decreasing in any subinterval, it admits exactly one zero there, when $\tan(\ell\sqrt{s\!-\!m^2})$ is positive.
Hence, $Z$ has exactly one zero on any interval and we have proved that
\begin{equation}\label{doublebound}
\big(m^2+\tfrac{\pi^2}{\ell^2}(k-2)^2\big)^2<\nu_{m,k}<\big(m^2+\tfrac{\pi^2}{\ell^2}(k-\tfrac32 )^2\big)^2\qquad\forall k\ge2\, ;
\end{equation}
in particular,
\begin{empheq}{align}\label{learn3}
\nu_{m,k}\ge\nu_{1,3}>(1+150^2)^2>150^4>\mu_{150,1}\qquad\forall m\ge1\, ,\ k\ge3\, ,
\end{empheq}
so that
\begin{center}
{\em no eigenvalues of the kind $\nu_{m,k}$ in $(iii)$ with $k\ge3$ are among the least 20.}
\end{center}

Summarizing, from \eqref{learn1}-\eqref{learn2}-\eqref{learn3}, we infer that the candidates to be among the least 20 eigenvalues are the
$\mu_{m,1}$ in $(i)$ and the $\nu_{m,2}$ in $(iii)$. By taking \eqref{lsigma}, we numerically find the least 20 eigenvalues of \eqref{eq:eigenvalueH2L2}
as reported in Table \ref{tableigen}.

\begin{table}[ht!]
\begin{center}
\begin{tabular}{|c|c|c|c|c|c|c|c|c|c|c|}
\hline
eigenvalue & $\lambda_1$ & $\lambda_2$ & $\lambda_3$ & $\lambda_4$ & $\lambda_5$ & $\lambda_6$ & $\lambda_7$ & $\lambda_8$ & $\lambda_9$ & $\lambda_{10}$\\
\hline
kind & $\mu_{1,1}$ & $\mu_{2,1}$ & $\mu_{3,1}$ & $\mu_{4,1}$ & $\mu_{5,1}$ & $\mu_{6,1}$  & $\mu_{7,1}$ & $\mu_{8,1}$ & $\mu_{9,1}$  & $\mu_{10,1}$\\
\hline
$\sqrt{\lambda_j}\approx$ & $0.98$ & $3.92$ & $8.82$ & $15.68$ & $24.5$ & $35.28$ & $48.02$ & $62.73$ & $79.39$ & $98.03$\\
\hline
\end{tabular}
\par\smallskip
\begin{tabular}{|c|c|c|c|c|c|c|c|c|c|c|}
\hline
eigenvalue & $\lambda_{11}$ & $\lambda_{12}$ & $\lambda_{13}$ & $\lambda_{14}$ & $\lambda_{15}$ & $\lambda_{16}$ & $\lambda_{17}$ & $\lambda_{18}$ &
$\lambda_{19}$ & $\lambda_{20}$\\
\hline
kind & $\nu_{1,2}$ & $\mu_{11,1}$ & $\mu_{12,1}$ & $\mu_{13,1}$ & $\mu_{14,1}$ & $\nu_{2,2}$ & $\mu_{15,1}$ & $\mu_{16,1}$ & $\mu_{17,1}$ & $\nu_{3,2}$\\
\hline
$\sqrt{\lambda_j}\approx$ & $104.61$ & $118.62$ & $141.19$ & $165.72$ & $192.21$ & $209.25$ & $220.68$ & $251.12$ & $283.53$ & $313.94$\\
\hline
\end{tabular}\vskip1mm
\caption{Approximate value of the least 20 eigenvalues of (\ref{eq:eigenvalueH2L2}), assuming (\ref{lsigma}).}\label{tableigen}
\end{center}
\end{table}
We reported the squared roots since these are the values to be used while explicitly writing the eigenfunctions, see Proposition \ref{spectrum}.
We also refer to \cite{bfg1} for numerical values of the eigenvalues for other choices of $\sigma$ and $\ell$: although their values are
slightly different, the eigenvalues maintain the same order.\par
By combining Proposition \ref{spectrum} with Table \ref{tableigen} we find that the eigenfunctions corresponding to the least 20 eigenvalues
of \eqref{eq:eigenvalueH2L2}, labeled by a unique index $k$, are given by:\par
$\bullet$ 17 longitudinal eigenfunctions: for $k\in\{1,2,3,4,5,6,7,8,9,10,12,13,14,15,17,18,19\}$ with corresponding
$m_k\in\{1,2,3,4,5,6,7,8,9,10,11,12,13,14,15,16,17\}$
$$w_k(x,y)=\left[\big[\lambda_k^{1/2}-\tfrac45 m_k^2\big]\, \tfrac{\cosh\Big(y\sqrt{m_k^2+\lambda_k^{1/2}}\Big)}{\cosh\Big(\tfrac{\pi}{150}\sqrt{m_k^2+\lambda_k^{1/2}}\Big)}+
\big[\lambda_k^{1/2}+\tfrac45 m_k^2\big]\, \tfrac{\cosh\Big(y\sqrt{m_k^2-\lambda_k^{1/2}}\Big)}{\cosh\Big(\tfrac{\pi}{150}\sqrt{m_k^2-\lambda_k^{1/2}}\Big)}\right]\sin(m_kx)\, ;$$

$\bullet$ 3 torsional eigenfunctions: for $k\in\{11,16,20\}$ with corresponding $m_k\in\{1,2,3\}$
$$
w_k(x,y)=\left[\big[\lambda_k^{1/2}-\tfrac45 m_k^2\big]\, \tfrac{\sinh\Big(y\sqrt{\lambda_k^{1/2}+m_k^2}\Big)}{\sinh\Big(\tfrac{\pi}{150}\sqrt{\lambda_k^{1/2}+m_k^2}\Big)}
+\big[\lambda_k^{1/2}+\tfrac45 m_k^2\big]\, \tfrac{\sin\Big(y\sqrt{\lambda_k^{1/2}-m_k^2}\Big)}{\sin\Big(\tfrac{\pi}{150}\sqrt{\lambda_k^{1/2}-m_k^2}\Big)}\right]\sin(m_kx)\, .
$$

We consider these $20$ eigenfunctions of \eqref{eq:eigenvalueH2L2}, we label them with a unique index $k$, and we shorten their explicit form with
\begin{equation}\label{twenty}
w_k(x,y)=\varphi_k(y)\sin(m_kx)\qquad(k=1,...,20)\, :
\end{equation}
we denote by $\lambda_k$ the corresponding eigenvalue. We assume that the $w_k$ are normalized in $L^2(\Omega)$:
\begin{empheq}{align}\label{normalized}
1=\int_\Omega w_k^2=\int_{|y|<\ell}\varphi_k(y)^2\cdot\int_0^\pi\sin^2(m_kx)\ \Longrightarrow\ \int_{|y|<\ell}\varphi_k(y)^2=\frac{2}{\pi}\, .
\end{empheq}
Then we define the numbers
\begin{equation}\label{gammak}
\gamma_k=\int_\Omega w_k
\end{equation}
and we remark that
\begin{equation}\label{gamma0}
\gamma_k=0\mbox{ if $w_k$ is a torsional eigenfunction or if $m_k$ is even}
\end{equation}
since for odd $\varphi_k$ one has $\int_{|y|<\ell}\varphi_k(y)=0$, whereas for even $m_k$ one has $\int_0^\pi\sin(m_kx)=0$.
For the remaining $\gamma_k$ (corresponding to longitudinal eigenfunctions with odd $m_k$), from \eqref{normalized} and the H\"older inequality we deduce
\neweq{Holder}
\gamma_k=\int_{|y|<\ell}\varphi_k(y)\cdot\int_0^\pi\sin(m_kx)\le\sqrt{2\ell}\left(\int_{|y|<\ell}\varphi_k(y)^2\right)^{1/2}\cdot\frac{2}{m_k}=
\frac{4}{m_k}\sqrt{\frac{\ell}{\pi}}\, .
\endeq
By assuming \eqref{lsigma}, this estimate becomes
$$
\gamma_k\le \frac{4}{m_k\sqrt{150}}=:\overline{\gamma}_k\, .
$$
In Table \ref{tablegamma} we quote the values of $\gamma_k$ for the symmetric (with respect to $x=\frac\pi2$) longitudinal eigenfunctions within
the above family and of their bound $\overline{\gamma}_k$. It turns out that $\gamma_k\approx \overline{\gamma}_k$ for all $k$.

\begin{table}[ht]
\begin{center}
\begin{tabular}{|c|c|c|c|c|c|c|c|c|c|}
\hline
\!eigenvalue\! & $\lambda_1$ & $\lambda_3$ & $\lambda_5$ & $\lambda_7$ & $\lambda_9$ & $\lambda_{12}$ & $\lambda_{14}$ & $\lambda_{17}$ & $\lambda_{19}$\\
\hline
$10\gamma_k\approx$\!&\!$3.26599$\!&\!$1.08866$\!&\!$0.653197$\!&\!$0.466569$\!&\!$0.362887$\!&\!$0.296908$\!&\!$0.251229$\!&\!$0.217732$\!&\!$0.192116$\!\\
\hline
$10\overline{\gamma}_k\approx$\!&\!$3.26599$\!&\!$1.08866$\!&\!$0.653197$\!&\!$0.466569$\!&\!$0.362887$\!&\!$0.296908$\!&\!$0.25123$\!&\!$0.217732$\!&\!$0.192117$\!\\
\hline
\end{tabular}\vskip1mm
\caption{Approximate value of $\gamma_k$ and $\overline{\gamma}_k$, as defined in Proposition \ref{spectrum} and (\ref{Holder}), assuming
(\ref{lsigma}).}\label{tablegamma}
\end{center}
\end{table}

We conclude this section by introducing the subspaces of even and odd functions with respect to $y$:
$$\begin{array}{cc}
\he:=\{u\in\hs:\, u(x,-y)=u(x,y)\ \forall(x,y)\in\Omega\}\, ,\\
\ho:=\{u\in\hs:\, u(x,-y)=-u(x,y)\ \forall(x,y)\in\Omega\}\, .
\end{array}$$
Then we have
\begin{equation}\label{decomposition}
\he\perp\ho\, ,\qquad \hs=\he\oplus\ho
\end{equation}
and, for all $u\in\hs$, we denote by $u^L\in\he$ and $u^T\in\ho$ its components according to this decomposition:
\begin{equation}\label{eq:DEO}
u^L(x,y)=\frac{u(x,y)+u(x,-y)}{2}\, ,\qquad u^T(x,y)=\frac{u(x,y)-u(x,-y)}{2}\, .
\end{equation}
The space $\he$ is spanned by the longitudinal eigenfunctions (classes $(i)$ and $(ii)$ in Proposition \ref{spectrum}) whereas the space
$\ho$ is spanned by the torsional eigenfunctions (classes $(iii)$ and $(iv)$). We will use these spaces to decompose the solutions of \eqref{enlm}
in their longitudinal and torsional components.\par
For all $\alpha>0$, it will be useful to study the time evolution of the ``energies'' defined by
\begin{equation}\label{energyalpha}
E_\alpha(t):=\frac12 \|u_t(t)\|_{L^2}^2 +\frac12 \|u(t)\|_{H^2_*}^2 -\frac{P}{2}\|u_x(t)\|_{L^2}^2+\frac{S}{4}\|u_x(t)\|_{L^2}^4+\alpha
\into u\xit u_t\xit \, d\xi\, .
\end{equation}
This energy can be decomposed according to \eqref{decomposition} as
\begin{equation}\label{energydecomposition}
E_\alpha(t)=E_\alpha^L(t)+E_\alpha^T(t)+E_\alpha^C(t)
\end{equation}
$$=\frac12 \|u_t^L(t)\|_{L^2}^2 +\frac12 \|u^L(t)\|_{H^2_*}^2 -\frac{P}{2}\|u_x^L(t)\|_{L^2}^2+\frac{S}{4}\|u_x^L(t)\|_{L^2}^4+\alpha
\into u^L\xit u^L_t\xit \, d\xi$$
$$+\frac12 \|u_t^T(t)\|_{L^2}^2 +\frac12 \|u^T(t)\|_{H^2_*}^2 -\frac{P}{2}\|u_x^T(t)\|_{L^2}^2+\frac{S}{4}\|u_x^T(t)\|_{L^2}^4+\alpha
\into u^T\xit u^T_t\xit \, d\xi$$
$$+\frac{S}{2}\|u_x^L(t)\|_{L^2}^2\|u_x^T(t)\|_{L^2}^2\, ,$$
where $E_\alpha^L$ represents the longitudinal energy, $E_\alpha^T$ the torsional energy, $E_\alpha^C$ the coupling energy.

\section{Main results}\label{main}

In this section we present our results concerning the problem

\begin{empheq}{align}\label{enlmg}
\left\{
\begin{array}{rl}
u_{tt}+\delta u_t + \Delta^2 u +\left[P-S\int_\Omega u_x^2\right]u_{xx}= g(\xi,t)  &\textrm{in }\Omega\times(0,T)\\
u = u_{xx}= 0 &\textrm{on }\{0,\pi\}\times[-\ell,\ell]\\
u_{yy}+\sigma u_{xx} = u_{yyy}+(2-\sigma)u_{xxy}= 0 &\textrm{on }[0,\pi]\times\{-\ell,\ell\}
\end{array}
\right.
\end{empheq}
complemented with some initial conditions
\begin{equation}\label{initialc}
u(\xi,0) = u_0(\xi), \quad \quad u_t(\xi,0) = v_0(\xi)\qquad\textrm{in }\Omega\, .
\end{equation}

Let us first make clear what we mean by solution of \eqref{enlmg}.

\begin{definition}[Weak solution]\label{df:weaksolution}
Let $g\in C^0([0,T],L^2(\Omega))$ for some $T>0$. A weak solution of \eqref{enlmg} is a function
\begin{empheq}{align}
u\in C^ 0([0,T],H^{2}_*(\Omega))\cap C^1([0,T],L^2(\Omega))\cap C^2([0,T],(H^{2}_*(\Omega))')
\end{empheq}
such that
\begin{empheq}{align}\label{weakform}
\langle u_{tt},v \rangle + \delta (u_t,v)_{L^2} + (u,v)_{H^2_*} +\big[S\|u_x\|_{L^2}^2-P\big](u_x,v_x)_{L^2}= (g,v)_{L^2}\,,
\end{empheq}
for all $t\in[0,T]$ and all $v\in H^{2}_*(\Omega)$.
\end{definition}

The following result holds.

\begin{theorem}\label{exuniq}
Given $\delta>0$, $S>0$, $P\in[0,\lambda_1)$, $T>0$, $g\in C^0([0,T],L^2(\Omega))$, $u_0\in H^{2}_*(\Omega)$ and $ v_0\in L^2(\Omega)$,
there exists a unique weak solution $u$ of \eqref{enlmg}-\eqref{initialc}.  Moreover, if $g\in C^1([0,T],L^2(\Omega))$, $u_0\in H^4\cap H^{2}_*(\Omega)$
and $v_0\in H^2_*(\Omega)$, then
$$u\in C^ 0([0,T],H^4\cap H^{2}_*(\Omega))\cap C^1([0,T],H^2_*(\Omega))\cap C^2([0,T],L^2(\Omega))$$
and $u$ is a strong solution of  \eqref{enlmg}-\eqref{initialc}.
\end{theorem}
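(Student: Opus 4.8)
The plan is to prove existence and uniqueness by a Galerkin approximation scheme, using the orthonormal basis $\{w_k\}_{k\in\mathbb N}$ of eigenfunctions of \eqref{eq:eigenvalueH2L2} furnished by Proposition \ref{spectrum}. For $n\in\mathbb N$ let $V_n=\mathrm{span}\{w_1,\dots,w_n\}$ and seek $u_n(t)=\sum_{k=1}^n c_k^n(t)\,w_k$ solving the finite-dimensional system obtained by testing \eqref{weakform} against each $w_j$, $j=1,\dots,n$, with initial data $u_n(0)$, $u_n'(0)$ the orthogonal projections of $u_0$, $v_0$ onto $V_n$ (in $H^2_*$ and $L^2$ respectively). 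Because $(w_j,w_k)_{L^2}=\delta_{jk}$ and $(w_j,w_k)_{H^2_*}=\lambda_j\delta_{jk}$, the system reads $\ddot c_j^n+\delta\dot c_j^n+\lambda_j c_j^n+\big[S\|(u_n)_x\|_{L^2}^2-P\big](\,(u_n)_x,(w_j)_x)_{L^2}=(g,w_j)_{L^2}$, a system of ODEs with locally Lipschitz (indeed polynomial) nonlinearity in the $c^n_k$, hence locally solvable by Cauchy--Lipschitz.

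Next I would derive the a priori energy estimate that yields global existence of the $u_n$ and the compactness needed to pass to the limit. Testing the $n$-th equation with $\dot u_n$ (i.e. multiplying the $j$-th ODE by $\dot c_j^n$ and summing) produces
\begin{equation*}
\frac{d}{dt}\Big(\tfrac12\|\dot u_n\|_{L^2}^2+\tfrac12\|u_n\|_{H^2_*}^2-\tfrac{P}{2}\|(u_n)_x\|_{L^2}^2+\tfrac{S}{4}\|(u_n)_x\|_{L^2}^4\Big)+\delta\|\dot u_n\|_{L^2}^2=(g,\dot u_n)_{L^2},
\end{equation*}
which is exactly $\frac{d}{dt}E_0(t)+\delta\|\dot u_n\|_{L^2}^2=(g,\dot u_n)_{L^2}$ for the energy \eqref{energyalpha} with $\alpha=0$. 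Since $P<\lambda_1$, the embedding inequalities \eqref{embedding} give $\tfrac12\|u_n\|_{H^2_*}^2-\tfrac{P}{2}\|(u_n)_x\|_{L^2}^2\ge\tfrac12(1-P/\lambda_1)\|u_n\|_{H^2_*}^2\ge0$, so $E_0$ is coercive; estimating the right-hand side with Cauchy--Schwarz and Young's inequality and applying Gr\"onwall yields a bound on $\|\dot u_n\|_{L^\infty(0,T;L^2)}$, $\|u_n\|_{L^\infty(0,T;H^2_*)}$, and (from the ODE, since $\|(u_n)_x\|_{L^2}$ and hence the nonlinear coefficient is bounded) on $\|\ddot u_n\|_{L^\infty(0,T;(H^2_*)')}$, uniformly in $n$. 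This precludes blow-up of the $c^n_k$, so the $u_n$ are global on $[0,T]$.

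Then I would extract a subsequence with $u_n\rightharpoonup u$ weakly-$*$ in $L^\infty(0,T;H^2_*)$, $\dot u_n\rightharpoonup u_t$ weakly-$*$ in $L^\infty(0,T;L^2)$, $\ddot u_n\rightharpoonup u_{tt}$ weakly-$*$ in $L^\infty(0,T;(H^2_*)')$, and — by Aubin--Lions, since $H^2_*\hookrightarrow\hookrightarrow L^2$ and the time derivatives are bounded — $u_n\to u$ strongly in $C^0([0,T];L^2)$ and, interpolating, strongly in $C^0([0,T];H^s)$ for any $s<2$; in particular $(u_n)_x\to u_x$ in $C^0([0,T];L^2)$, so $\|(u_n)_x\|_{L^2}^2\to\|u_x\|_{L^2}^2$ uniformly. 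This last convergence is the one point that needs care, because the term $S\|u_x\|_{L^2}^2(u_x,v_x)_{L^2}$ is nonlinear and nonlocal; but the strong $L^2$ convergence of $(u_n)_x$ handles it cleanly, and one passes to the limit in the Galerkin identity tested against any fixed $w_j$ (then, by density, against any $v\in H^2_*$) to obtain \eqref{weakform} for a.e.\ $t$, hence for all $t$ after noting the continuity. The regularity $u\in C^0([0,T];H^2_*)\cap C^1([0,T];L^2)$ follows from the standard argument that a function in $L^\infty(0,T;X)$ with derivative in $L^\infty(0,T;Y)$, $X\hookrightarrow Y$, is weakly continuous into $X$, upgraded to strong continuity using the energy equality; $u\in C^2([0,T];(H^2_*)')$ then comes from the equation itself. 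I expect this \emph{limit-passage plus continuity bookkeeping} to be the main technical obstacle, more delicate than the energy estimate.

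For uniqueness, let $u,\tilde u$ be two weak solutions with the same data and set $w=u-\tilde u$. Subtracting the two weak formulations, testing with $w_t$ (justified by a density/Galerkin argument since $w_t\in L^2$ but not necessarily in $H^2_*$ — one works at the Galerkin level or uses a Steklov average), and controlling the difference of the nonlinear terms,
\begin{equation*}
\big[S\|u_x\|_{L^2}^2-P\big]u_{xx}-\big[S\|\tilde u_x\|_{L^2}^2-P\big]\tilde u_{xx}
= \big[S\|u_x\|_{L^2}^2-P\big]w_{xx}+S\big(\|u_x\|_{L^2}^2-\|\tilde u_x\|_{L^2}^2\big)\tilde u_{xx},
\end{equation*}
one uses the a priori bounds on $u,\tilde u$ in $H^2_*$ together with \eqref{embedding-H^2_*-u_xx} and $\big|\|u_x\|_{L^2}^2-\|\tilde u_x\|_{L^2}^2\big|\le C\|w_x\|_{L^2}$ to obtain a differential inequality $\frac{d}{dt}\big(\|w_t\|_{L^2}^2+\|w\|_{H^2_*}^2\big)\le C\big(\|w_t\|_{L^2}^2+\|w\|_{H^2_*}^2\big)$; since $w(0)=w_t(0)=0$, Gr\"onwall forces $w\equiv0$.

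Finally, for the strong-solution statement, assume $g\in C^1([0,T];L^2)$, $u_0\in H^4\cap H^2_*$, $v_0\in H^2_*$. I would obtain a second, higher-order a priori estimate on the Galerkin approximations by differentiating the $j$-th ODE in $t$ and testing with $\ddot c_j^n$ (equivalently testing the differentiated weak equation with $u_{tt,n}$), which controls $\|\ddot u_n\|_{L^\infty(0,T;L^2)}$ and $\|\dot u_n\|_{L^\infty(0,T;H^2_*)}$; the compatibility of the initial data (note $v_0\in H^2_*$ and $u_0\in H^4\cap H^2_*$ make the initial value of $u_{tt}$ lie in $L^2$) ensures these are uniform in $n$. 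Reading the equation $\Delta^2 u_n = g - u_{tt,n}-\delta u_{t,n}-[S\|(u_n)_x\|^2-P](u_n)_{xx}$ as an elliptic problem, elliptic regularity for $\Delta^2$ under the boundary conditions of \eqref{eq:eigenvalueH2L2} gives a uniform $H^4$ bound on $u_n$, and passing to the limit as before yields $u\in C^0([0,T];H^4\cap H^2_*)\cap C^1([0,T];H^2_*)\cap C^2([0,T];L^2)$, so $u$ is a strong solution; it coincides with the weak solution already constructed by the uniqueness just proved.
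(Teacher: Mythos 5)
Your proposal is correct and follows essentially the same route as the paper: the paper delegates existence and uniqueness of weak solutions to \cite[Theorem 3]{FerGazMor}, whose argument is precisely the Galerkin scheme, energy estimate, Aubin--Lions compactness and Gronwall uniqueness you spell out (with the testing against $w_t$ justified, as you anticipate, by a regularization result of the type of Lemma \ref{justification}). For the strong-solution part your plan coincides with the paper's Proposition \ref{regularity1-ball}: uniform bounds on $\|(u_k)_t(0)\|_{H^2_*}$ and $\|(u_k)_{tt}(0)\|_{L^2}$ from the compatibility of the data, differentiation of the Galerkin system in $t$ tested with $(u_k)_{tt}$, Gronwall, and recovery of the $H^4$ bound from the equation.
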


We point out that alternative regularity results may be obtained under different assumptions on the source, see e.g.\ \cite[Theorem 2.2.1]{harauxbook}.

From now on we are interested in global in time solutions and their torsional stability (in a suitable sense): for this analysis, a crucial role is played by periodic solutions.

\begin{theorem}\label{th:peridoc}
Let $\delta>0$, $S>0$, $P\in[0,\lambda_1)$, $g\in C^0(\mathbb{R},L^2(\Omega))$. If $g$ is $\tau$-periodic in time for some $\tau>0$ (that is, $g(\xi,t+\tau)=g(\xi,t)$ for all $\xi$ and $t$), then there exists a $\tau$-periodic solution of \eqref{enlmg}.
\end{theorem}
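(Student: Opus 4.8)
The plan is to construct the periodic solution by a Galerkin scheme in the eigenbasis of \eqref{eq:eigenvalueH2L2}, to solve the finite-dimensional projected problem by means of a Poincar\'e (time-$\tau$) map together with Brouwer's fixed point theorem, and then to pass to the limit. The damping $\delta>0$ and the condition $P<\lambda_1$ enter in an essential way, through an energy estimate that forces the dynamics into an absorbing set.

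First I would set up the approximation. Let $\{w_j\}_{j\ge1}$ be the $L^2$-orthonormal basis of eigenfunctions of \eqref{eq:eigenvalueH2L2}, with eigenvalues $\lambda_j$, and $V_n:=\mathrm{span}\{w_1,\dots,w_n\}$. Requiring \eqref{weakform} for $u_n(t)=\sum_{j=1}^n c_j^n(t)w_j$ tested against $w_1,\dots,w_n$ produces a system of $n$ second order ODEs for the $c_j^n$ with cubic nonlinearity and $\tau$-periodic continuous forcing $t\mapsto(g(t),w_j)_{L^2}$. The key preliminary fact is that, for a suitably small $\alpha>0$ depending only on $\delta,S,P,\lambda_1$, the energy $E_\alpha$ from \eqref{energyalpha} evaluated along $u_n$ obeys a dissipative inequality $\tfrac{d}{dt}E_\alpha(t)\le-\rho\,E_\alpha(t)+C\max_{[0,\tau]}\|g\|_{L^2}^2$ with $\rho,C>0$ independent of $n$; this is precisely the type of bound collected in Section \ref{enest}, and it uses $\delta>0$ (dissipation), $P<\lambda_1$ (coercivity of the quadratic part, via \eqref{embedding}) and $S>0$ (favourable sign of the quartic term). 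In particular the ODE solutions are global, and Gr\"onwall's lemma gives $E_\alpha(\tau)\le e^{-\rho\tau}E_\alpha(0)+\tfrac{C}{\rho}\max_{[0,\tau]}\|g\|_{L^2}^2$.

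Hence, setting $M:=\tfrac{C\max_{[0,\tau]}\|g\|_{L^2}^2}{\rho(1-e^{-\rho\tau})}$, the sublevel set $K_M:=\{(\phi,\psi)\in\hs\times L^2(\Omega):E_\alpha[\phi,\psi]\le M\}$ is carried into itself by the time-$\tau$ map $T_n:V_n\times V_n\to V_n\times V_n$ attached to the Galerkin system. For $\alpha$ small and $P<\lambda_1$ the functional $E_\alpha$ is convex and coercive on $\hs\times L^2(\Omega)$: its quadratic part is positive definite, and the term $\tfrac{S}{4}\|\phi_x\|_{L^2}^4$ is convex. Therefore $K_M\cap(V_n\times V_n)$ is a compact convex subset of $\mathbb{R}^{2n}$ invariant under the continuous map $T_n$, and Brouwer's theorem yields a fixed point, i.e.\ a $\tau$-periodic solution $u_n$ of the Galerkin system with $(u_n(0),u_{n,t}(0))\in K_M$.

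Finally I would pass to the limit. Since $M$ is independent of $n$ and $E_\alpha$ dominates the phase-space norm, $u_n$ is bounded in $C^0([0,\tau],\hs)\cap C^1([0,\tau],L^2(\Omega))$, and the equation shows $u_{n,tt}$ bounded in $C^0([0,\tau],(\hs)')$. By the Aubin--Lions--Simon lemma a subsequence converges strongly in $C^0([0,\tau],H^1(\Omega))$ — so $u_{n,x}\to u_x$ in $C^0([0,\tau],L^2(\Omega))$ — and weakly-$*$ in the spaces above; the uniform convergence of $\|u_{n,x}(t)\|_{L^2}^2$ then lets me pass to the limit in the nonlocal term $[S\|u_{n,x}\|_{L^2}^2-P]u_{n,xx}$, so the limit $u$ satisfies \eqref{weakform} for all $v\in\hs$ and inherits $u(t+\tau)=u(t)$. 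Taking $u_0:=u(0)$, $v_0:=u_t(0)$ and invoking the uniqueness statement of Theorem \ref{exuniq} (alternatively the standard regularity lemmas for weakly continuous functions), $u$ coincides on $[0,\tau]$ with the weak solution of Definition \ref{df:weaksolution} with these data; extending it $\tau$-periodically gives the desired $\tau$-periodic weak solution of \eqref{enlmg}. The hard part will be this last step: securing enough compactness to treat the nonlinear nonlocal term and upgrading the $L^\infty$-in-time Galerkin bounds to the continuity in time required by Definition \ref{df:weaksolution}; the rest is the bookkeeping needed to keep every constant in the energy inequality independent of $n$, so that the single absorbing sublevel set $K_M$ serves at all Galerkin levels.
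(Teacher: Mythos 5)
Your proposal is correct, but the finite-dimensional existence step is genuinely different from the paper's. The paper truncates the forcing to $g^n=\sum_{k\le n}g_k(t)w_k$, observes that the resulting PDE admits solutions with only the first $n$ modes active (this is equivalent to your Galerkin projection, since $\partial_{xx}w_k=-m_k^2w_k$ so the nonlocal term preserves the span), and then finds a $\tau$-periodic solution of the finite ODE system \eqref{finsystem-newprooftheo6-2} by a Leray--Schauder continuation: it inverts the damped linear diagonal operator $L_n$ on the Banach space of continuous $\tau$-periodic $n$-tuples, puts a homotopy parameter $\nu\in[0,1]$ in front of the gradient nonlinearity, and uses the energy bounds of Lemma \ref{energybound-PDE} (which are uniform in $\nu$) as the a priori estimate required by the degree argument. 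You instead work in phase space: the same dissipative inequality $\dot E_\alpha\le-\alpha E_\alpha+C\max\|g\|_{L^2}^2$ shows that a sublevel set of $E_\alpha$ is invariant under the time-$\tau$ map, and since $E_\alpha$ is coercive and convex (positive-definite quadratic part for $P<\lambda_1$ and small $\alpha$, plus the convex quartic $\tfrac S4\|\phi_x\|_{L^2}^4$), Brouwer's theorem applies directly. Your route is more elementary (no degree theory, no compactness of $L_n^{-1}$) but leans on the convexity of $E_\alpha$, which you correctly justify; the paper's route avoids any convexity discussion and only needs the a priori bound. The passage to the limit is essentially identical in both arguments --- uniform energy bounds, weak-$*$ convergence, and Lemma \ref{justification} for the time-continuity required by Definition \ref{df:weaksolution} --- though you are more explicit than the paper about the Aubin--Lions compactness needed to pass to the limit in the nonlocal coefficient $\|u_{n,x}(t)\|_{L^2}^2$, which is a point the paper glosses over.
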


Let $\{w_k\}$ denote the sequence of all the eigenfunctions of \eqref{eq:eigenvalueH2L2} labeled with a unique index $k$ and, for a given
$g\in C^0(\mathbb{R}_+,L^2(\Omega))$, let
\begin{equation}\label{coef-g}
g_k(t)=\int_\Omega g(\xi,t)w_k(\xi)\, d\xi\, .
\end{equation}
Also write a solution $u$ of \eqref{enlmg} in the form
\begin{equation}\label{Fourier}
u(\xi,t)=\sum_{k=1}^\infty h_k(t)w_k(\xi)\, ,
\end{equation}
so that $u$ is identified by its Fourier coefficients which satisfy the infinite dimensional system
\begin{equation}\label{infsystem}
\ddot{h}_k(t)+\delta\dot{h}_k(t)+\lambda_kh_k(t)+m_k^2\left[-P+S\sum_{j=1}^\infty m_j^2h_j(t)^2\right]h_k(t)=g_k(t)
\end{equation}
for all integer $k$, where $m_k$ is the frequency in the $x$-direction, see \eqref{twenty}. In fact, more can be said. According to Proposition
\ref{spectrum}, the eigenfunctions $w_k$ of \eqref{eq:eigenvalueH2L2} belong to two categories: longitudinal and torsional. Then we use the decomposition
\eqref{decomposition} in order to write \eqref{Fourier} in the alternative form
\begin{equation}\label{LT}
u(\xi,t)=u^L(\xi,t)+u^T(\xi,t)\, ,
\end{equation}
that is, by emphasizing its longitudinal and torsional parts.\par

\begin{definition}[Torsional stability/instability]\label{defstab}
We say that $g=g(\xi,t)$ makes the system \eqref{enlmg} torsionally stable if every solution of \eqref{enlmg}, written in the form \eqref{LT}, satisfies
$\|u_t^T(t)\|_{L^2}+\|u^T(t)\|_{H^2_*}\to 0$ as $t\to\infty$. We say that $g=g(\xi,t)$ makes the system \eqref{enlmg} torsionally unstable if there exists
a solution of \eqref{enlmg} such that $\displaystyle\limsup_{t\to\infty}(\|u_t^T(t)\|_{L^2}+\|u^T(t)\|_{H^2_*})>0$.
\end{definition}

In particular, the embedding $H^2_*(\Omega)\hookrightarrow L^\infty(\Omega)$ enables us to infer that, if $g$ makes
\eqref{enlmg} torsionally stable, then the torsional component of any solution $u$ of \eq{enlmg} tends uniformly to zero, namely,
\[
\lim_{t\to\infty}\|u^T(t)\|_{L^\infty}=0\, .
\]

As we shall see, the torsional stability strongly depends on the amplitude of the force $g$ at infinity. More precisely, it is necessary to assume that
\begin{equation}\label{smallginfty}
g\in C^0(\mathbb{R}_+,L^2(\Omega))\ ,\qquad g_\infty:=\limsup_{t\to\infty}\|g(t)\|_{L^2}<+\infty.
\end{equation}

The torsional stability, as characterized by Definition \ref{defstab}, has a physical interpretation in terms of energy.

\begin{proposition}\label{NSC}
Let $\nu_{1,2}$ be the least torsional eigenvalue, see Proposition \ref{spectrum}.
Let $E_\alpha$ be the energy defined in \eqref{energyalpha} and assume \eqref{smallginfty} and  that $0<\alpha^2<\nu_{1,2}-P$.
Then $g=g(\xi,t)$ makes the system \eqref{enlmg} torsionally stable if and only if every solution of \eqref{enlmg}, written in the form
\eqref{LT} has vanishing torsional energy (see \eqref{energydecomposition}) at infinity:
$$\lim_{t\to\infty}E_\alpha^T(t)=0\, .$$
\end{proposition}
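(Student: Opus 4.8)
The plan is to show the equivalence by analyzing the torsional component $u^T$ in isolation, exploiting that the decomposition $\hs=\he\oplus\ho$ is orthogonal and compatible with both norms appearing in $E_\alpha$. First I would observe that the quadratic-plus-coupling structure in \eqref{energydecomposition} gives
\[
E_\alpha^T(t)=\tfrac12\|u^T_t(t)\|_{L^2}^2+\tfrac12\|u^T(t)\|_{H^2_*}^2-\tfrac P2\|u^T_x(t)\|_{L^2}^2+\tfrac S4\|u^T_x(t)\|_{L^2}^4+\alpha\into u^T u^T_t\,d\xi\,,
\]
and the key point is to bound this quantity above and below by a constant multiple of $\|u^T_t(t)\|_{L^2}^2+\|u^T(t)\|_{H^2_*}^2$, at least once this sum is small. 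The direction ``torsional stability $\Rightarrow$ $E_\alpha^T(t)\to0$'' is then the easy one: if $\|u^T_t(t)\|_{L^2}+\|u^T(t)\|_{H^2_*}\to0$ then each of the five terms above tends to zero — the quadratic terms trivially, the quartic term because $\|u^T_x\|_{L^2}^2\le\lambda_1^{-1}\|u^T\|_{H^2_*}^2$ by \eqref{embedding}, and the mixed term by Cauchy–Schwarz together with the $L^2$–$H^2_*$ embedding $\|u^T\|_{L^2}\le\lambda_1^{-1/2}\|u^T\|_{H^2_*}$.

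For the converse, I would first establish coercivity of the purely quadratic part of $E_\alpha^T$. Since $u^T\in\ho$ is spanned by torsional eigenfunctions, whose eigenvalues are all $\ge\nu_{1,2}$, one has the improved Poincaré-type inequality $\|u^T_x(t)\|_{L^2}^2\le\nu_{1,2}^{-1}\|u^T(t)\|_{H^2_*}^2$ (this is the analogue of \eqref{embedding} restricted to $\ho$, and is where the hypothesis on $\nu_{1,2}$ enters). Consequently
\[
\tfrac12\|u^T\|_{H^2_*}^2-\tfrac P2\|u^T_x\|_{L^2}^2\ \ge\ \tfrac12\Bigl(1-\tfrac{P}{\nu_{1,2}}\Bigr)\|u^T\|_{H^2_*}^2\,,
\]
which is a positive multiple of $\|u^T\|_{H^2_*}^2$ because $P<\lambda_1<\nu_{1,2}$. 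Combining this with the mixed term, estimated by $|\alpha\into u^Tu^T_t|\le\tfrac{\alpha}{2\sqrt{\nu_{1,2}}}\|u^T\|_{H^2_*}^2+\tfrac{\alpha\sqrt{\nu_{1,2}}}{2}\cdot\tfrac1{\nu_{1,2}}\|u^T_t\|_{L^2}^2$ — or more carefully using $\|u^T\|_{L^2}^2\le\nu_{1,2}^{-1}\|u^T\|_{H^2_*}^2$ and Young's inequality — one checks that the condition $\alpha^2<\nu_{1,2}-P$ is exactly what makes
\[
E_\alpha^T(t)\ \ge\ c\bigl(\|u^T_t(t)\|_{L^2}^2+\|u^T(t)\|_{H^2_*}^2\bigr)+\tfrac S4\|u^T_x(t)\|_{L^2}^4
\]
for some $c>0$; since $S>0$ the quartic term only helps. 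Hence $E_\alpha^T(t)\to0$ forces $\|u^T_t(t)\|_{L^2}^2+\|u^T(t)\|_{H^2_*}^2\to0$, which is torsional stability by Definition \ref{defstab}.

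The main obstacle I anticipate is getting the algebra of the mixed term sharp enough that the stated hypothesis $0<\alpha^2<\nu_{1,2}-P$ — rather than some cruder bound — is precisely the one that yields coercivity; this requires splitting $\alpha\into u^Tu^T_t$ via Young's inequality with a carefully chosen weight (optimizing over the split) and combining it with the $(1-P/\nu_{1,2})$ factor so that both the $\|u^T\|_{H^2_*}^2$ and $\|u^T_t\|_{L^2}^2$ coefficients remain strictly positive. A secondary point worth stating explicitly is that one needs $\alpha>0$ fixed throughout (the condition $\alpha^2<\nu_{1,2}-P$ is nonempty precisely because $\nu_{1,2}>P$), and that the role of \eqref{smallginfty} here is only to guarantee that solutions are global and that $E_\alpha^T$ is well-defined for all $t$; the forcing $g$ itself does not enter the pointwise equivalence, only the fact that the two-sided bound between $E_\alpha^T$ and $\|u^T_t\|_{L^2}^2+\|u^T\|_{H^2_*}^2$ holds uniformly in $t$.
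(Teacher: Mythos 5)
Your proposal is correct and follows essentially the same route as the paper: the paper's argument is precisely the improved torsional embedding \eqref{improvedtorsional} combined with the observation that $0<\alpha^2<\nu_{1,2}-P$ makes the lower bound for $E_\alpha^T$ a positive definite quadratic form. The only (harmless) difference is that you push the coercivity all the way to $\|u^T_t\|_{L^2}^2+\|u^T\|_{H^2_*}^2$ via a Young splitting with weight close to $1$, whereas the paper states positive definiteness in $\|u^T_t\|_{L^2}$ and $\|u^T\|_{L^2}$ and leaves the upgrade to the $H^2_*$ norm implicit.
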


Note that the upper bound $\nu_{1,2}-P$ is very large, see Table \ref{tableigen}, much larger than the values of $\alpha^2$ used to obtain the
energy bounds in Section \ref{enest}. To prove this statement, we observe that \eqref{embedding} may be improved for torsional functions:
\begin{equation}\label{improvedtorsional}
\nu_{1,2}\|v\|_{L^2}^2\le\|v\|_{H^2_*}^2\, ,\quad\nu_{1,2}\|v_x\|_{L^2}^2\le\|v\|_{H^2_*}^2\qquad\forall v\in\ho\, .
\end{equation}
This shows that
$$E_\alpha^T(t)\ge \frac12 \|u_t^T(t)\|_{L^2}^2 +\frac{\nu_{1,2}-P}2 \|u^T(t)\|_{L^2}^2
+\alpha\into u^T\xit u^T_t\xit \, d\xi$$
and, with the assumption on $\alpha$, the right hand side of this inequality is a positive definite quadratic form with respect to
$\|u_t^T(t)\|_{L^2}$ and $\|u^T(t)\|_{L^2}$.\par
We now give a sufficient condition for the torsional stability.

\begin{theorem}\label{SCstability}
Assume that $\delta>0$, $S>0$, $0\le P<\lambda_1$, and \eqref{smallginfty}. There exists $g_0=g_0(\delta,S,P,\lambda_1)>0$ such that if
$g_\infty<g_0$, then:\par
$\bullet$ there exists $\eta>0$ such that, for any couple $(u,v)$ of solutions of \eqref{enlmg}, one has
\begin{equation}\label{squeezing}
\lim_{t\to\infty}{\rm e}^{\eta t}\Big(\|u_t(t)-v_t(t)\|_{L^2}^2+\|u(t)-v(t)\|_{H^2_*}^2\Big)=0\, ;
\end{equation}

$\bullet$ if $g$ is $\tau$-periodic for some $\tau>0$, then \eqref{enlmg} admits a unique periodic solution $U^p$ and
$$\lim_{t\to\infty}{\rm e}^{\eta t}\Big(\|u_t(t)-U^p_t(t)\|_{L^2}^2+\|u(t)-U^p(t)\|_{H^2_*}^2\Big)=0$$

for any other solution $u$ of \eqref{enlmg}; \par
$\bullet$ if $g$ is even with respect to $y$, then $g$ makes the system \eqref{enlmg} torsionally stable.
\end{theorem}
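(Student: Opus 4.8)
The plan is to establish the three assertions in order, the first one being the heart of the matter.

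\textbf{The squeezing estimate.} Let $u,v$ be two weak solutions of \eqref{enlmg} and put $w:=u-v$. Subtracting the weak formulations \eqref{weakform} for $u$ and for $v$ and using $\|u_x\|_{L^2}^2-\|v_x\|_{L^2}^2=(u_x+v_x,w_x)_{L^2}$, one checks that $w$ solves the linear nonautonomous problem
\[
\langle w_{tt},z\rangle+\delta(w_t,z)_{L^2}+(w,z)_{H^2_*}+\big[S\|u_x\|_{L^2}^2-P\big](w_x,z_x)_{L^2}+S(u_x+v_x,w_x)_{L^2}(v_x,z_x)_{L^2}=0
\]
for all $z\in\hs$. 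The idea is to build a Lyapunov functional: for $\alpha\in(0,\delta)$ small (to be fixed in terms of $\delta,P,\lambda_1$ only) I would take $z=w_t+\alpha w$ and set
\[
\Phi(t):=\tfrac12\|w_t\|_{L^2}^2+\tfrac12\|w\|_{H^2_*}^2-\tfrac P2\|w_x\|_{L^2}^2+\alpha(w_t,w)_{L^2}+\tfrac{\alpha\delta}{2}\|w\|_{L^2}^2+\tfrac S2\,\|u_x\|_{L^2}^2\,\|w_x\|_{L^2}^2 .
\]
The last term is the crucial device: in $\tfrac{d}{dt}\Phi$ its contribution $\tfrac S2\|u_x\|_{L^2}^2\,\tfrac{d}{dt}\|w_x\|_{L^2}^2$ cancels exactly the term $S\|u_x\|_{L^2}^2(w_x,w_{tx})_{L^2}$ coming from testing with $w_t$, so that no derivative of $w$ beyond those already controlled by $\Phi$ survives. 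By \eqref{embedding} and $P<\lambda_1$, once $\alpha$ is small and $\|u_x(t)\|_{L^2}$ is bounded (see below) $\Phi$ is equivalent to $\|w_t\|_{L^2}^2+\|w\|_{H^2_*}^2$, and the dissipative part $(\delta-\alpha)\|w_t\|_{L^2}^2+\alpha\big(1-\tfrac P{\lambda_1}\big)\|w\|_{H^2_*}^2$ produced by the computation dominates $\eta\,\Phi(t)$ for some $\eta>0$.

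\textbf{The main obstacle} is to absorb the leftover terms, namely $\tfrac S2\big(\tfrac{d}{dt}\|u_x\|_{L^2}^2\big)\|w_x\|_{L^2}^2$ and $S(u_x+v_x,w_x)_{L^2}(v_x,w_{tx}+\alpha w_x)_{L^2}$ (the term $-\alpha S\|u_x\|_{L^2}^2\|w_x\|_{L^2}^2$ is $\le0$ and only helps). For the first I would use $\tfrac{d}{dt}\|u_x(t)\|_{L^2}^2=-2(u_{xx}(t),u_t(t))_{L^2}$ — integration by parts in $x$, the boundary terms vanishing since $u_t=0$ on $\{0,\pi\}\times[-\ell,\ell]$ — together with \eqref{embedding-H^2_*-u_xx}; the second is handled by \eqref{embedding}. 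Both are then bounded by $C\,\Theta(t)\big(\|w_t\|_{L^2}^2+\|w\|_{H^2_*}^2\big)$, where $\Theta(t)$ is a polynomial in $S$ and in $\|u(t)\|_{H^2_*}$, $\|u_t(t)\|_{L^2}$, $\|u_x(t)\|_{L^2}$, $\|v_x(t)\|_{L^2}$, vanishing for $g\equiv0$. The energy estimates of Section \ref{enest} control the $\limsup_{t\to\infty}$ of each of these norms by quantities that are small when $g_\infty$ is small; hence there is $g_0=g_0(\delta,S,P,\lambda_1)>0$ such that, if $g_\infty<g_0$, then for $t$ large $C\,\Theta(t)$ stays below the coercivity constant and $\tfrac{d}{dt}\Phi(t)\le-\eta\,\Phi(t)$ for $t\ge t_0$. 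Gronwall gives $\Phi(t)\le\Phi(t_0)e^{-\eta(t-t_0)}$, and the equivalence of $\Phi$ with $\|w_t\|_{L^2}^2+\|w\|_{H^2_*}^2$ yields \eqref{squeezing} (after shrinking $\eta$ slightly, to pass from boundedness of $e^{\eta t}\Phi(t)$ to the stated limit). The differentiations above have to be justified for merely weak solutions; this is done exactly as in the energy estimates of Section \ref{enest}, e.g. on Galerkin approximations.

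\textbf{Uniqueness and attractivity of the periodic solution.} Existence of a $\tau$-periodic solution $U^p$ is Theorem \ref{th:peridoc}. If $U^p_1,U^p_2$ were two $\tau$-periodic solutions, the map $t\mapsto\|(U^p_1)_t(t)-(U^p_2)_t(t)\|_{L^2}^2+\|U^p_1(t)-U^p_2(t)\|_{H^2_*}^2$ would be $\tau$-periodic and, by \eqref{squeezing}, infinitesimal as $t\to\infty$, hence identically zero, so $U^p_1=U^p_2$. Applying \eqref{squeezing} to the couple $(u,U^p)$ gives the claimed convergence for every solution $u$.

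\textbf{Torsional stability when $g$ is even in $y$.} The biharmonic operator, the term $\partial_{xx}$, the nonlocal coefficient $\int_\Omega u_x^2$ and the boundary conditions in \eqref{enlmg} are all invariant under $y\mapsto-y$; since $g$ is even in $y$, it follows that $(x,y,t)\mapsto U(x,-y,t)$ is a weak solution of \eqref{enlmg} whenever $U$ is, with initial data reflected in $y$. Given an arbitrary solution $u$ with data $(u_0,v_0)$, let $\bar u$ be the unique solution (Theorem \ref{exuniq}) with data $(u_0^L,v_0^L)$, the even-in-$y$ parts as in \eqref{eq:DEO}. By uniqueness $\bar u(x,-y,t)=\bar u(x,y,t)$, i.e. $\bar u(t)\in\he$ for all $t$, so $\bar u^T\equiv0$. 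Now apply the squeezing estimate to the couple $(u,\bar u)$; since the splitting \eqref{decomposition} is orthogonal both in $L^2(\Omega)$ and in $\hs$ and commutes with $\partial_t$, we obtain
\[
\|u^T(t)\|_{H^2_*}=\|(u-\bar u)^T(t)\|_{H^2_*}\le\|u(t)-\bar u(t)\|_{H^2_*},\qquad \|u^T_t(t)\|_{L^2}=\|(u_t-\bar u_t)^T(t)\|_{L^2}\le\|u_t(t)-\bar u_t(t)\|_{L^2},
\]
and the right-hand sides tend to $0$. Hence $\|u^T_t(t)\|_{L^2}+\|u^T(t)\|_{H^2_*}\to0$, which is torsional stability in the sense of Definition \ref{defstab}.
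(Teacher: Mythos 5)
Your proposal is correct in substance, and the second and third bullets follow the paper's own route (the paper derives the purely longitudinal comparison solution by showing that its torsional Fourier components solve a homogeneous linear ODE with zero data, whereas you invoke the $y\mapsto-y$ symmetry plus uniqueness from Theorem \ref{exuniq}; these are interchangeable). The interesting divergence is in the first bullet. The paper does not build a Lyapunov functional for $w=u-v$: it sets $w=(u-v)e^{\eta t}$, regards the resulting equation as the \emph{linear} problem \eqref{perturb-eq} with source $h\,e^{\eta t}$, where $h=S\big(u_{xx}\int_\Omega u_x^2-v_{xx}\int_\Omega v_x^2\big)$, proves $\limsup_t\|h e^{\eta t}\|_{L^2}^2\le K_g\|w\|_{H^2_*}^2$ with $K_g\to0$ as $g_\infty\to0$ (using the same asymptotic bounds of Section \ref{enest} that you invoke), and then feeds this back into the a priori $\limsup$ estimates \eqref{utbound-lin}--\eqref{uH2bound-lin} for the linear equation to get $\limsup_t\|w\|_{H^2_*}^2\le CK_g\limsup_t\|w\|_{H^2_*}^2$, forcing the $\limsup$ to vanish when $CK_g<1$. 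Your differential-inequality argument with the modified energy $\Phi$ (whose term $\tfrac S2\|u_x\|_{L^2}^2\|w_x\|_{L^2}^2$ cancels the worst coupling term) is a legitimate alternative and is in some ways more self-contained, since it does not need the separate linear lemma; both methods produce the same kind of smallness threshold $g_0$.

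Two points to tighten. First, the leftover term $S(u_x+v_x,w_x)_{L^2}(v_x,w_{tx})_{L^2}$ cannot be "handled by \eqref{embedding}" as written, because $w_{tx}$ is not controlled by $\|w_t\|_{L^2}^2+\|w\|_{H^2_*}^2$ (and does not even exist for weak solutions); you must integrate by parts to rewrite it as $-(v_{xx},w_t)_{L^2}$ (legitimate at the Galerkin level) and then use \eqref{embedding-H^2_*-u_xx}, so that $\Theta(t)$ must also contain $\|v(t)\|_{H^2_*}$ --- harmless, since Lemma \ref{lem:H2bound} controls it, but it should be stated. Second, your uniqueness argument only rules out a second $\tau$-periodic solution; the theorem asserts uniqueness among all periodic solutions, so for a $\tau'$-periodic competitor $V$ one should argue (as the paper does, tersely) that \eqref{squeezing} applied to $(U^p,V)$ forces $V=U^p$, e.g.\ by approximating a common return time of the two periods and using the uniform continuity of $t\mapsto(U^p(t),V(t))$.
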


Several comments are in order. Theorem \ref{SCstability} {\em is not} a perturbation statement, the constant $g_0$ can be explicitly computed,
see Lemma \ref{lemme-naze} below. In particular, it can be seen that $g_0\asymp1/\sqrt{S}$ as $S\to\infty$. This shows that the nonlinearity plays
against uniqueness and stability results: for large $S$ only very small forces $g$ ensure the validity of Theorem \ref{SCstability}.

In Section \ref{numres} we will give numerical evidence that Theorem \ref{SCstability} is somehow sharp, for large $g$ the stability statement
seems to be false. Here we show that if $g$ is large then multiple periodic solutions may exist. We recall that a function $w$ is called
$\tau-$antiperiodic if $w(t + \tau) = - w(t)$ for all $t$. In particular, a $\tau-$antiperiodic function is also $2\tau$-periodic.

\begin{theorem}\label{th:multiple-periodic-solutions}
There exist $\tau>0$ and a $\tau$-antiperiodic function $g=g(\xi,t)$, such that the equation \eqref{enlmg}
admits at least two distinct $\tau$-antiperiodic solutions for a suitable choice of the parameters $\delta$, $P$ and $S$.
\end{theorem}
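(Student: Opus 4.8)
The plan is to collapse \eqref{enlmg} to a single scalar Duffing equation and then invoke, in the spirit of \cite{souplet}, a multiplicity result for that ODE. Using the Fourier representation \eqref{Fourier}--\eqref{infsystem} with respect to the eigenfunction basis, I look for a solution carried by the first eigenfunction $w_1$ only, i.e. $h_k\equiv 0$ for all $k\ge 2$; by \eqref{infsystem} this is consistent exactly when $g_k\equiv 0$ for $k\ge 2$, that is, when $g(\xi,t)=\gamma(t)\,w_1(\xi)$ for a scalar function $\gamma$, and then (recall $m_1=1$) the coefficient $h:=h_1$ must solve
$$\ddot h+\delta\dot h+(\lambda_1-P)\,h+S\,h^3=\gamma(t).$$
Conversely, any solution $h$ of this scalar equation yields, via $u=h\,w_1$, a weak solution of \eqref{enlmg} with $g=\gamma\,w_1$ (if $\gamma\in C^0$ then $h\in C^2$, which gives the regularity required in Definition \ref{df:weaksolution}); moreover $u$ is $\tau$-antiperiodic iff $h$ is, and $g$ is $\tau$-antiperiodic iff $\gamma$ is. Thus two distinct $\tau$-antiperiodic solutions $h$ produce two distinct $\tau$-antiperiodic solutions of \eqref{enlmg}, since $w_1\not\equiv 0$. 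Here $A:=\lambda_1-P$ runs over $(0,\lambda_1]$ as $P$ runs over $[0,\lambda_1)$, while $B:=S>0$ is arbitrary; the rescaling $h(t)=\mu\,H(\sqrt{B}\,\mu\,t)$ normalizes $B$ to $1$, turns $A$ and $\delta$ into two further free positive parameters and $\tau$ into a free period. Hence it suffices to exhibit \emph{one} example
$$H''+pH'+qH+H^3=r(s)$$
with $p,q>0$ and $r$ a suitable $T_0$-antiperiodic function, possessing at least two distinct $T_0$-antiperiodic solutions, and then transport it back to admissible $\delta>0$, $P\in[0,\lambda_1)$, $S>0$.

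For the scalar multiplicity I would first handle the conservative case $p=0$ variationally. Choose $q$ and $T_0$ with $q<(\pi/T_0)^2$, so that on the Hilbert space $E_a$ of $T_0$-antiperiodic $H^1_{loc}$ functions the quadratic form $Q(H)=\tfrac12\int_0^{2T_0}\big((H')^2-qH^2\big)$ is positive definite, and work with the functional $J(H)=Q(H)-\tfrac14\int_0^{2T_0}H^4+\int_0^{2T_0}rH$ on $E_a$, whose critical points are precisely the $T_0$-antiperiodic solutions. For $r\ne 0$ with $\|r\|_{L^2}$ small, $J$ has the mountain-pass geometry: there is a strict local minimum $\bar H\ne 0$ (close to the small solution of the linearized problem) with $J(\bar H)<0$, while $J(t\phi)\to-\infty$ along any ray because of the defocusing-sign quartic $-\tfrac14\int H^4$; since $H^1(0,2T_0)\hookrightarrow C^0$ compactly, the Palais--Smale condition is checked directly. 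The mountain-pass theorem then yields a second critical point $\hat H$ with $J(\hat H)>J(\bar H)$, hence $\hat H\ne\bar H$: two distinct $T_0$-antiperiodic solutions. (The mechanism is that the minimal period of the hardening oscillator $v''+qv+v^3=0$ decreases from $2\pi/\sqrt q$ to $0$ as the amplitude grows, so for $q<(\pi/T_0)^2$ a genuinely large-amplitude $T_0$-antiperiodic orbit coexists with the small near-linear response.)

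To reinstate the damping $\delta>0$ (equivalently $p>0$) I would pass from $p=0$ to small $p>0$ by perturbation. The local minimum $\bar H$ is non-degenerate for $\|r\|$ small (its second variation $\int\big((v')^2-qv^2-3\bar H^2v^2\big)$ remains positive definite), so it persists to a $T_0$-antiperiodic solution of the damped equation by the implicit function theorem. The second solution is the delicate point: one obtains it either by choosing $r$ in a residual set for which every $T_0$-antiperiodic solution of the $p=0$ equation is non-degenerate and applying the implicit function theorem again, or by a Leray--Schauder continuation argument resting on a priori bounds for $T_0$-antiperiodic solutions that are uniform for small $p$. An alternative that addresses $\delta>0$ head-on is a harmonic-balance/Lyapunov--Schmidt scheme for the damped Duffing equation with $r(s)=\beta\sin(\omega s)$: in the resonant window the leading amplitude equation $\tfrac34 Ba^3+(A-\omega^2)a=\beta$ has three simple real roots, each of which lifts to a genuine $T_0$-antiperiodic solution. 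I expect the main obstacle to be exactly this last step — rigorously establishing the multiplicity for the damped scalar Duffing equation, i.e. the persistence of the non-variational branch under the dissipative, non-gradient perturbation (or, equivalently, controlling the Lyapunov--Schmidt reduction together with the needed a priori bounds); the reduction of paragraph one and the conservative case are routine.
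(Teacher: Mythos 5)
Your first paragraph (projection onto a single eigenfunction, reduction to the scalar damped Duffing equation $\ddot h+\delta\dot h+(\lambda_1-P)h+Sh^3=\gamma(t)$, and the rescaling of parameters) is exactly the reduction used in the paper, which takes $u^i(\xi,t)=v^i(t)\phi(\xi)$ for an eigenfunction $\phi$ and chooses $\delta=1$, $\lambda-Pm^2=b$, $S=m^{-4}$, $g=\phi f$. The divergence, and the genuine gap, is in the scalar multiplicity result. Your conservative ($p=0$) mountain-pass argument is fine as far as it goes, but the content of the theorem lies entirely in the damped case, and there your proof stops: you yourself flag that the persistence of the second (mountain-pass) solution under the dissipative perturbation is unproved. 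This is not a technicality one can wave at. The damped antiperiodic problem is non-variational, so the mountain-pass structure does not survive; the implicit-function route requires non-degeneracy of the second critical point, which mountain-pass solutions need not enjoy, and making it generic in $r$ requires a Sard--Smale transversality argument you do not supply; the Leray--Schauder continuation gives existence of \emph{one} antiperiodic solution (which is Theorem \ref{th:peridoc}, already known), not two, unless you compute local degrees, which again needs non-degeneracy; and the harmonic-balance amplitude equation you write down omits the damping term (the correct frequency-response relation is $\bigl(\tfrac34a^3+(A-\omega^2)a\bigr)^2+\delta^2\omega^2a^2=\beta^2$) and the ``each root lifts'' step is precisely the unproved Lyapunov--Schmidt reduction. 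So the proposal, as written, does not prove the theorem.

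The paper closes exactly this gap by a completely different, non-perturbative device taken from Souplet: instead of producing the forcing term first and then hunting for two solutions, it reverse-engineers the data. If $u$ and $v=u+w$ both solve $\ddot z+\dot z+bz+z^3=f$, then $u$ must be a root of the quadratic $3u^2+3wu+\bigl(w^2+\tfrac{\ddot w+\dot w}{w}+b\bigr)=0$, whose discriminant is $-12\bigl(\tfrac{w^2}{4}+\tfrac{\ddot w+\dot w}{w}+b\bigr)$. The paper constructs an explicit $T$-antiperiodic $w$ (built from a degree-$5$ polynomial near its zeros) for which this discriminant is nonnegative and the resulting square root is smooth and antiperiodic; it then \emph{defines} $u=-\tfrac12 w+\tfrac1{\sqrt3}\theta\sqrt{\phi}$, $v=u+w$, and finally sets $f:=\ddot u+\dot u+bu+u^3$. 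Both solutions and the forcing term are thus exhibited in closed form, with no compactness, degree, or non-degeneracy issues, and with $\delta=1$ fixed rather than small. If you want to salvage your approach, the honest route is to prove non-degeneracy of the second undamped solution for a suitably chosen $r$ (or adapt Souplet's construction directly), because without that your argument establishes multiplicity only for the undamped equation, which is not what the theorem asserts.
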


To prove Theorem \ref{th:multiple-periodic-solutions} we follow very closely the arguments in \cite{souplet}. For alternative statements and proofs we refer to \cite{2012GasmiHaraux, souplet2}.

In real life, it is more interesting to consider the converse problem: given a maximal intensity of the wind in the region where the bridge will
be built, can one design a structure that remains torsionally stable under that wind? The next statement shows that it is enough to have
a sufficiently large damping.

\begin{theorem}\label{SCstability2}
Assume that $S>0$, $0\le P<\lambda_1$. Assume \eqref{smallginfty} and that $g$ is even with respect to $y$.
There exists $\delta_0=\delta_0(g_\infty,S,P,\lambda_1)>0$ such that if $\delta>\delta_0$, then there exists $\eta>0$ such that
$$\lim_{t\to\infty}{\rm e}^{\eta t}\Big(\|u_t^T(t)\|_{L^2}^2+\|u^T(t)\|_{H^2_*}^2\Big)=0$$
for any solution $u$ of \eqref{enlmg}.
\end{theorem}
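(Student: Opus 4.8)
\emph{Proof plan.} Since $g$ is even in $y$, its torsional component vanishes, $g^T\equiv0$. The operators $\Delta^2$ and $\partial_x^2$ preserve the parity in $y$, and $\|u_x\|_{L^2}^2=\|u_x^L\|_{L^2}^2+\|u_x^T\|_{L^2}^2$ because $u_x^L$ is even and $u_x^T$ is odd in $y$; hence, projecting \eqref{enlmg} onto $\ho$ by means of \eqref{decomposition}--\eqref{eq:DEO}, the torsional part $u^T$ of any solution satisfies, for every $v\in\ho$ and every $t$,
\[
\langle u_{tt}^T,v\rangle+\delta(u_t^T,v)_{L^2}+(u^T,v)_{H^2_*}+\big[S\|u_x\|_{L^2}^2-P\big](u_x^T,v_x)_{L^2}=0 .
\]
This is a \emph{linear homogeneous} equation for $u^T$, but with the time-dependent, solution-dependent coefficient $S\|u_x(t)\|_{L^2}^2-P$; the only way $u^T$ can fail to decay is therefore through the coupling carried by $\|u_x^L(t)\|_{L^2}^2$, which is precisely the transfer mechanism isolated in Theorem \ref{smalltorsion}.

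The plan is to run a Lyapunov argument on the torsional energy. Fix a small $\alpha>0$ (to be chosen, depending on $\delta$ and on $g_\infty,S,P,\lambda_1$) and set $\Psi:=E_\alpha^T$, see \eqref{energydecomposition}. By the improved Poincar\'e-type inequalities \eqref{improvedtorsional}, by \eqref{embedding-H^2_*-u_xx}, and since $P<\lambda_1\le\nu_{1,2}$, for $t$ large and $\alpha$ small $\Psi$ is equivalent to $\|u_t^T(t)\|_{L^2}^2+\|u^T(t)\|_{H^2_*}^2$; here one uses the a priori energy bounds of Section \ref{enest} (boundedness, for $t$ large, of $\|u^L(t)\|_{H^2_*}$ and of $\|u_x^L(t)\|_{L^2}$, uniformly in $\delta$) to dominate the quartic terms. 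Testing the torsional equation above with $v=u_t^T$ and with $v=u^T$ and combining, all the local terms cancel exactly as in the other energy identities of the paper, leaving
\[
\frac{d}{dt}\Psi=-(\delta-\alpha)\|u_t^T\|_{L^2}^2-\alpha\|u^T\|_{H^2_*}^2-\alpha\big[S\|u_x\|_{L^2}^2-P\big]\|u_x^T\|_{L^2}^2-\alpha\delta(u_t^T,u^T)_{L^2}-\tfrac S2\|u_x^L\|_{L^2}^2\,\tfrac{d}{dt}\|u_x^T\|_{L^2}^2 ,
\]
where only the last term lacks a favourable sign.

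To close the estimate I would: bound $-\alpha[S\|u_x\|^2-P]\|u_x^T\|^2\le\alpha P\|u_x^T\|^2\le\frac{\alpha P}{\nu_{1,2}}\|u^T\|_{H^2_*}^2$; absorb $-\alpha\delta(u_t^T,u^T)_{L^2}$ into the large dissipation $(\delta-\alpha)\|u_t^T\|_{L^2}^2$ by Young's inequality — this produces a term of size $\sim\alpha^2\delta\,\|u^T\|_{L^2}^2$ and thereby forces $\alpha$ to be of order $1/\delta$; and estimate the transfer term by writing $\frac{d}{dt}\|u_x^T\|_{L^2}^2=-2(u_{xx}^T,u_t^T)_{L^2}$, using \eqref{embedding-H^2_*-u_xx} and the bound $\limsup_t\|u_x^L(t)\|_{L^2}^2\le K_L$ from Section \ref{enest}, to get it $\le$ (a small multiple of $\|u^T\|_{H^2_*}^2$) $+$ (a term again absorbed into $(\delta-\alpha)\|u_t^T\|_{L^2}^2$). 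The outcome is an inequality of the form
\[
\frac{d}{dt}\Psi\le-\tfrac{\delta-\alpha}{2}\|u_t^T\|_{L^2}^2-\Big[\alpha\big(1-\tfrac{P}{\nu_{1,2}}\big)-\tfrac{\alpha^2\delta^2}{(\delta-\alpha)\nu_{1,2}}-\tfrac{(SK_L)^2}{\gamma(\delta-\alpha)}\Big]\|u^T\|_{H^2_*}^2 .
\]
Choosing $\alpha$ of order $(\nu_{1,2}-P)/\delta$ and then $\delta$ large (how large depending on $K_L$, hence on $g_\infty,S,P,\lambda_1$), the bracket is bounded below by a positive constant, so $\frac{d}{dt}\Psi\le-2\eta\Psi$ for $t$ large and some $\eta>0$; integrating and using the equivalence of $\Psi$ with $\|u_t^T\|_{L^2}^2+\|u^T\|_{H^2_*}^2$ gives the claim (after possibly shrinking $\eta$ to absorb the word ``eventually''). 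One may equivalently mimic the proof of the torsional-stability part of Theorem \ref{SCstability}, with ``$\delta$ large'' playing the role that ``$g_\infty$ small'' plays there.

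The main obstacle is the transfer term $\tfrac S2\|u_x^L\|_{L^2}^2\tfrac{d}{dt}\|u_x^T\|_{L^2}^2$. The delicate point is that, although the velocity dissipation $(\delta-\alpha)\|u_t^T\|_{L^2}^2$ is huge, the displacement part of the torsional energy is dissipated only at rate $\alpha\sim 1/\delta$ (the torsional mode is overdamped); so the mere fact that large damping ``helps'' is not self-evident. One has to use the \emph{quantitative} bounds of Section \ref{enest} twice: to show that the transfer term is itself of order $1/\delta$ with a constant controlled by $SK_L$, and — implicitly — that $S\,\limsup_t\|u_x(t)\|_{L^2}^2$ stays below the torsional spectral parameters (so that the coefficient $S\|u_x\|^2-P$ remains in the ``stable'' range), which is comfortable given the large gap between $\lambda_1$ and $\nu_{1,2}$. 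Making all the $O(1/\delta)$ quantities produced by the $\alpha$-perturbation balance, so that the net coefficient of $\|u^T\|_{H^2_*}^2$ is strictly negative, is the technical heart of the argument.
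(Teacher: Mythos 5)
Your overall strategy is the same as the paper's: project \eqref{enlmg} onto $\ho$ to obtain a linear homogeneous equation for $u^T$ with the time-dependent coefficient $a(t)=S\|u_x(t)\|_{L^2}^2$, bound $\limsup_t a(t)$ via the estimates of Section \ref{enest} by a constant $K_L$ depending only on $g_\infty,S,P,\lambda_1$ and not on $\delta$ (this part is correct: for $\delta^2\ge4(\lambda_1-P)$ one has $E_\mu(\infty)\le g_\infty^2/2(\lambda_1-P)$ with $\mu\to0$), and then prove exponential decay of the torsional part for $\delta$ large. The gap is in the decay step, and it is fatal as written. Because you absorb the cross term $-\alpha\delta(u_t^T,u^T)_{L^2}$ by Young's inequality into the dissipation $(\delta-\alpha)\|u_t^T\|_{L^2}^2$, you are forced to take $\alpha\lesssim(\nu_{1,2}-P)/\delta$. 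But then \emph{every} term in your final bracket --- the gain $\alpha(1-P/\nu_{1,2})$, the cross-term cost $\alpha^2\delta^2/((\delta-\alpha)\nu_{1,2})$ and the transfer cost $(SK_L)^2/(\gamma(\delta-\alpha))$ --- scales exactly like $1/\delta$. Setting $\alpha=c/\delta$ and letting $\delta\to\infty$, the sign of the bracket is governed by $c(\nu_{1,2}-P)-c^2-\nu_{1,2}(SK_L)^2/\gamma$, which does not depend on $\delta$; optimizing over $c$ leaves the requirement $(SK_L)^2<\gamma(\nu_{1,2}-P)^2/(4\nu_{1,2})$. That is a smallness condition on $S\limsup_t\|u_x(t)\|_{L^2}^2$ --- essentially the hypothesis of Theorem \ref{smalltorsion}, not of Theorem \ref{SCstability2} --- and enlarging $\delta$ does not help. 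Your concluding claim that ``choosing $\alpha$ of order $(\nu_{1,2}-P)/\delta$ and then $\delta$ large the bracket is bounded below by a positive constant'' is therefore false.

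The repair is to \emph{not} apply Young's inequality to the cross term: since $\alpha\delta(u_t^T,u^T)_{L^2}=\tfrac{\alpha\delta}{2}\tfrac{d}{dt}\|u^T\|_{L^2}^2$ is an exact derivative, add $\tfrac{\alpha\delta}{2}\|u^T(t)\|_{L^2}^2$ to your functional and it cancels. Then $\alpha$ may be taken of order one, the coefficient of $-\|u^T\|_{H^2_*}^2$ is $O(1)$ while the transfer cost $(SK_L)^2/(\gamma(\delta-\alpha))$ is $O(1/\delta)$, and the balance closes for $\delta$ large, with decay rate $\eta\sim 1/\delta$ because the augmented functional contains the large term $\tfrac{\delta}{2}\|u^T\|_{L^2}^2$ (the torsional mode is overdamped, as you note). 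This is precisely what the paper's Lemma \ref{H37} does, in integrated form: testing with $u_t^T$ and with $u^T$ (coefficient $1$, not $\alpha$) and integrating over $(t,s)$ turns $\delta\int_t^s(u_t^T,u^T)_{L^2}$ into the boundary term $\tfrac{\delta}{2}\big[\|u^T\|_{L^2}^2\big]_t^s$, which leads to $\int_t^\infty\psi\le\psi(t)/\kappa$ and hence to exponential decay; Lemma \ref{fubini} is needed there to justify the integration by parts against $u^T$ at the weak-solution level, a point which your pointwise computation also glosses over.
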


Theorem \ref{SCstability2} implies that $g$ makes the system \eqref{enlmg} torsionally stable if the damping is large enough. A natural question is then
to find out whether a full counterpart of Theorem \ref{SCstability} holds. More precisely, is it true that
under the assumptions of Theorem \ref{SCstability2} we have the ``squeezing property'' \eqref{squeezing}, provided that $\delta$ is sufficiently
large? In particular, if $g$ is periodic, is it true that there exists a unique periodic solution of \eqref{enlmg} whenever $\delta$
is large enough? We conjecture both these questions to have a positive answer but we leave them as open problems.\par
Finally, we show that the nonlinear term is responsible for the possible torsional instability.

\begin{theorem}\label{smalltorsion}
Assume that $\delta>0$, $S>0$, $0\le P<\lambda_1$, $g\in C^0(\mathbb{R}_+,L^2(\Omega))$ even with respect to $y$.
There exists $\chi=\chi(\delta,S,P,\lambda_1)>0$ such that if a solution $u$ of \eqref{enlmg} (written in the form \eqref{LT}) satisfies
\begin{equation}\label{chidelta}
\limsup_{t\to\infty}\|u_x(t)\|_{L^2}^2<\chi\, ,
\end{equation}
then its torsional component vanishes exponentially as $t\to\infty$; more precisely, there exists $\eta>0$ such that
$$\lim_{t\to\infty} {\rm e}^{\eta t}\left(\|u^T(t)\|_{H^2_*}^2+\|u^T_t(t)\|_{L^2}^2\right)=0.$$
\end{theorem}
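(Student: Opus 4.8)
\emph{Reduction to the torsional component.} The first thing I would do is project the equation onto $\ho$. Since $g$ is even in $y$ its torsional part vanishes, so testing \eqref{weakform} only against odd $v\in\ho$, and using that the decomposition \eqref{eq:DEO} is orthogonal in $L^2(\Omega)$ and in $\hs$ and commutes with $\partial_x$ and $\partial_t$, one gets that $u^T$ is a weak solution of
\[
u^T_{tt}+\delta u^T_t+\Delta^2 u^T+\big[P-S\|u_x\|_{L^2}^2\big]u^T_{xx}=0\,.
\]
The key structural remark is that the coefficient $P-S\|u_x\|_{L^2}^2$ is space-independent, so I would write it as $P-r(t)$ with $r(t):=S\|u_x(t)\|_{L^2}^2\ge0$ and read the equation as a perturbation of the \emph{fixed} linear equation governed by $\Delta^2+P\partial_{xx}$:
\[
u^T_{tt}+\delta u^T_t+\Delta^2 u^T+Pu^T_{xx}=r(t)\,u^T_{xx}\,.
\]
The operator $\Delta^2+P\partial_{xx}$ is coercive on $\ho$ precisely because $0\le P<\lambda_1$: by \eqref{embedding}, $\|v\|_{H^2_*}^2-P\|v_x\|_{L^2}^2\ge(1-P/\lambda_1)\|v\|_{H^2_*}^2$; and the forcing $r(t)u^T_{xx}$ has, by \eqref{chidelta}, a coefficient $r(t)$ that is eventually smaller than a (to-be-chosen, small) threshold $S\chi$.

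\emph{Perturbed torsional energy.} Next I would fix $\alpha\in\big(0,\min\{1,\delta,\lambda_1-P\}\big)$ and work with
\[
\Phi(t):=\tfrac12\|u^T_t\|_{L^2}^2+\tfrac12\|u^T\|_{H^2_*}^2-\tfrac P2\|u^T_x\|_{L^2}^2+\alpha\into u^T u^T_t\,d\xi+\tfrac{\alpha\delta}2\|u^T\|_{L^2}^2\,.
\]
Testing the equation for $u^T$ with $u^T_t+\alpha u^T$ — which for weak solutions must be justified by the same regularization used for the energy identities in Section \ref{enest} — gives
\[
\frac{d}{dt}\Phi(t)+(\delta-\alpha)\|u^T_t\|_{L^2}^2+\alpha\|u^T\|_{H^2_*}^2-\alpha P\|u^T_x\|_{L^2}^2=r(t)\big(u^T_{xx},u^T_t+\alpha u^T\big)_{L^2}\,.
\]
Using \eqref{embedding} and the choice of $\alpha$, I expect constants $0<c_*\le C_*$ and $\eta_0>0$ depending only on $\delta,P,\lambda_1$ with $c_*(\|u^T_t\|_{L^2}^2+\|u^T\|_{H^2_*}^2)\le\Phi(t)\le C_*(\|u^T_t\|_{L^2}^2+\|u^T\|_{H^2_*}^2)$ and $(\delta-\alpha)\|u^T_t\|_{L^2}^2+\alpha\|u^T\|_{H^2_*}^2-\alpha P\|u^T_x\|_{L^2}^2\ge2\eta_0\Phi(t)$. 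For the right-hand side, since $r(t)\ge0$ one has $\alpha r(t)(u^T_{xx},u^T)_{L^2}=-\alpha r(t)\|u^T_x\|_{L^2}^2\le0$, so by \eqref{embedding-H^2_*-u_xx}
\[
r(t)\big(u^T_{xx},u^T_t+\alpha u^T\big)_{L^2}\le r(t)\|u^T_{xx}\|_{L^2}\|u^T_t\|_{L^2}\le\frac{r(t)}{2\sqrt\gamma}\big(\|u^T_t\|_{L^2}^2+\|u^T\|_{H^2_*}^2\big)\le\frac{r(t)}{2\sqrt\gamma\,c_*}\,\Phi(t)\,,
\]
and therefore $\frac{d}{dt}\Phi(t)+\big(2\eta_0-r(t)/(2\sqrt\gamma\,c_*)\big)\Phi(t)\le0$ for all $t\ge0$.

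\emph{Choice of $\chi$ and conclusion.} Finally I would set $\chi:=2\sqrt\gamma\,c_*\,\eta_0/S$, which depends only on $\delta,S,P,\lambda_1$ ($\gamma$ in \eqref{embedding-H^2_*-u_xx} being a fixed datum, $\gamma\ge1-\sigma^2$). If \eqref{chidelta} holds there is $t_1$ with $r(t)=S\|u_x(t)\|_{L^2}^2<S\chi=2\sqrt\gamma\,c_*\,\eta_0$ for $t\ge t_1$, hence $2\eta_0-r(t)/(2\sqrt\gamma\,c_*)>\eta_0$ and $\frac{d}{dt}\Phi(t)+\eta_0\Phi(t)\le0$ for $t\ge t_1$; Gronwall gives $\Phi(t)\le\Phi(t_1)e^{-\eta_0(t-t_1)}$, and the coercivity of $\Phi$ yields $\|u^T(t)\|_{H^2_*}^2+\|u^T_t(t)\|_{L^2}^2\le c_*^{-1}\Phi(t_1)e^{-\eta_0(t-t_1)}$, which is the assertion with, say, $\eta=\eta_0/2$. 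The energy manipulations are routine for a damped plate equation; the step that requires an idea is the splitting $P-S\|u_x\|_{L^2}^2=P-r(t)$ in the first step, which is what lets one treat the nonlocal term as a genuinely small perturbation of an operator that is coercive on $\ho$ thanks to $P<\lambda_1$, so that the built-in exponential decay margin $2\eta_0$ of the torsional energy can absorb it. The remaining technical care goes into justifying the energy identity for merely weak rather than strong solutions.
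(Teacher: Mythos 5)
Your proposal is correct, and the energy identity, the coercivity of $\Phi$ for small $\alpha$, the sign argument discarding $\alpha r(t)(u^T_{xx},u^T)_{L^2}=-\alpha r(t)\|u^T_x\|_{L^2}^2\le 0$, and the final choice of $\chi$ all check out; but your route differs from the paper's. The paper also starts from the projected equation \eqref{weaktorsion} with $a(t)=S\|u_x(t)\|_{L^2}^2$, but instead of building a Lyapunov functional it substitutes $w=u^T{\rm e}^{\eta t}$, observes that $w$ solves the \emph{linear} problem \eqref{perturb-eq} with $\delta$ replaced by $\delta-2\eta$, $b=\eta(\delta-\eta)$ and forcing $h=a(t)w_{xx}$, and then closes the argument by feeding the asymptotic bound \eqref{uH2bound-lin} of Lemma \ref{boundsforlinear} back into itself: under the smallness condition \eqref{assumptionasmall} one gets $\limsup_{t\to\infty}\|w(t)\|_{H^2_*}^2\le\theta\,\limsup_{t\to\infty}\|w(t)\|_{H^2_*}^2$ with $\theta<1$, hence the limsup vanishes, and \eqref{utbound-lin} then kills $\|w_t\|_{L^2}$. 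That route has the advantage of recycling, verbatim, the machinery of Section \ref{enest} and of running in parallel with the proof of Lemma \ref{lemme-naze}; its absorption step, however, tacitly requires knowing that $\limsup\|w\|_{H^2_*}^2$ is finite, a point your argument sidesteps entirely, since the differential inequality $\dot\Phi+\eta_0\Phi\le0$ on $[t_1,\infty)$ plus Gronwall yields the exponential decay directly, with an explicit rate and an explicit threshold $\chi=2\sqrt{\gamma}\,c_*\eta_0/S$ (whose dependence on $\gamma$ matches the paper's \eqref{assumptionasmall}). Your approach is essentially the same perturbed-energy technique the paper itself uses to prove Lemma \ref{energybound-PDE}, just restricted to the torsional component, so it is fully consistent with the paper's toolbox; the only refinements worth noting are that the rigorous justification of testing with $u^T_t+\alpha u^T$ for merely weak solutions is supplied by Lemma \ref{justification} together with Lemma \ref{fubini} (as you anticipate), and that using the improved torsional embedding \eqref{improvedtorsional} with $\nu_{1,2}$ in place of $\lambda_1$ in your coercivity constants would give a larger admissible $\chi$, as the paper's condition does.
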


Theorem \ref{smalltorsion} shows that, with no smallness nor
periodicity assumptions on $g$ and no request of large $\delta$, the possible culprit for the torsional instability of a given solution is a large
nonlinear term: from a physical point of view, this means that if the {\em stretching energy} of the solution is eventually
small, then the torsional component of the solution vanishes exponentially fast as $t\to\infty$. This result does not come unexpected: the nonlinearity
of the system is concentrated in the stretching term which means that ``small stretching implies small nonlinearity'' which, in turn, implies
``little instability''. The nonlinear term, which is a coupling term between the longitudinal and torsional movements, see \eqref{energydecomposition},
acts as a force able to transfer energy from one component to the other. Even if $g$ has no torsional part
(when $g$ is even with respect to $y$), it may happen that the solution $u$ displays a nonvanishing torsional part $u^T$ as $t\to\infty$.\par
Since it only refers to some particular solution $u$ of \eqref{enlmg}, Theorem \ref{smalltorsion} {\em does not} give a sufficient condition for $g$
to make \eqref{enlmg} torsionally stable according to Definition \ref{defstab}. Nevertheless, from Theorem \ref{smalltorsion} we deduce

\begin{corollary}
Assume that $\delta>0$, $S>0$, $0\le P<\lambda_1$, $g\in C^0(\mathbb{R}_+,L^2(\Omega))$ even with respect to $y$.
Let $\chi=\chi(\delta,S,P,\lambda_1)>0$ be as in Theorem \ref{smalltorsion}. If every solution $u$ of \eqref{enlmg} (written in the form \eqref{LT})
satisfies \eqref{chidelta}, then $g$ makes \eqref{enlmg} torsionally stable.
\end{corollary}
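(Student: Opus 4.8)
The plan is to obtain this statement as an immediate pointwise application of Theorem \ref{smalltorsion} over the family of all solutions of \eqref{enlmg}. First I would fix an arbitrary solution $u$ of \eqref{enlmg}, written as in \eqref{LT}. By the hypothesis of the corollary, $u$ satisfies \eqref{chidelta} with exactly the constant $\chi=\chi(\delta,S,P,\lambda_1)$ furnished by Theorem \ref{smalltorsion}; since $g$ is even with respect to $y$ and $\delta>0$, $S>0$, $0\le P<\lambda_1$, all the hypotheses of Theorem \ref{smalltorsion} are met. Therefore the conclusion of that theorem applies to $u$: there exists $\eta>0$ such that
$$
\lim_{t\to\infty}{\rm e}^{\eta t}\left(\|u^T(t)\|_{H^2_*}^2+\|u^T_t(t)\|_{L^2}^2\right)=0\, .
$$
In particular, dropping the exponential weight, $\|u_t^T(t)\|_{L^2}+\|u^T(t)\|_{H^2_*}\to 0$ as $t\to\infty$.

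Since the solution $u$ was arbitrary, this shows that \emph{every} solution of \eqref{enlmg} has vanishing torsional component at infinity in the sense of Definition \ref{defstab}, which is precisely the assertion that $g$ makes \eqref{enlmg} torsionally stable. The only point worth flagging — and it is not an obstacle — is that the decay rate $\eta$ provided by Theorem \ref{smalltorsion} is a priori solution-dependent, as it is built from the value of $\limsup_{t\to\infty}\|u_x(t)\|_{L^2}^2$ for the specific $u$; however, Definition \ref{defstab} only requires the qualitative convergence $\|u_t^T(t)\|_{L^2}+\|u^T(t)\|_{H^2_*}\to 0$ for each solution separately, so no uniform rate across solutions is needed and the argument closes with no further work.
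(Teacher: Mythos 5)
Your proposal is correct and is exactly the argument the paper intends: the corollary follows by applying Theorem \ref{smalltorsion} to each solution separately, and Definition \ref{defstab} only demands the qualitative convergence $\|u_t^T(t)\|_{L^2}+\|u^T(t)\|_{H^2_*}\to 0$ solution by solution, so the solution-dependence of $\eta$ is harmless, just as you note.
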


Clearly, a sufficient condition for \eqref{chidelta} to hold for any solution, is that $g$ is small; in this case we are back to Theorem \ref{SCstability}.

\begin{remark}\label{onlyt}
If the force $g$ does not depend on the space variable $\xi$, that is $g=g(t)$ as in most cases of a wind acting on the deck of a bridge, the same proofs
of Theorems \ref{SCstability2} and \ref{smalltorsion} show that the skew-symmetric (with $m$ even) longitudinal components also decay exponentially to zero.
\end{remark}

Overall, the results stated in the present section give some answers to question (b). We have seen that the stability of a longitudinal prevailing mode
is ensured provided that $g$ is sufficiently small and/or $\delta$ is sufficiently large. Moreover, the responsibility for the torsional instability is only
the stretching energy and not the bending energy.

\section{How to determine the prevailing mode: linear analysis}\label{linearanal}

In order to give an answer to question (a) we seek a criterion to predict which will be the prevailing mode of oscillation.
As already mentioned, the wind flow generates vortices that appear periodic in time and have the shape of \eqref{deMiranda}, where the frequency
and amplitude depend increasingly on the scalar velocity $W>0$.
Initially, the deck is still and the wind starts its transversal action on the deck. For some time, the oscillation of the deck will be small.
This suggests to neglect the nonlinear term in \eqref{enlmg} and to consider the linear problem
\begin{equation}\label{linearpb}
\left\{
\begin{array}{rl}
u_{tt}+\delta u_t + \Delta^2 u +Pu_{xx}= W^2\sin(\omega t)  &\textrm{in }\Omega\times(0,T)\\
u = u_{xx}= 0 &\textrm{on }\{0,\pi\}\times[-\ell,\ell]\\
u_{yy}+\sigma u_{xx} = u_{yyy}+(2-\sigma)u_{xxy}= 0 &\textrm{on }[0,\pi]\times\{-\ell,\ell\}\\
u(\xi,0) = u_t(\xi,0) = 0 &\textrm{in }\Omega\, .
\end{array}
\right.
\end{equation}

Arguing as in the proof of \cite[Theorem 7]{FerGazMor}, we deduce that both the torsional and the longitudinal
skew-symmetric components of the solution are zero. Therefore, we may write the solution of \eqref{linearpb} as
\[
u(\xi,t)=\sum_{k=1}^{\infty}S_k(t)w_k(\xi),
\]
where $w_k$ are the symmetric longitudinal eigenfunctions, see cases ($i$) and ($ii$) in Proposition \ref{spectrum} with $m$ odd.
Denote by $\lambda_k$ the eigenvalue of \eqref{eq:eigenvalueH2L2} associated to $w_k$. Let $\gamma_k$ be as in \eqref{gammak},
then the coefficients $S_k(t)$ satisfy the ODE
\begin{equation}\label{ODE-linear}
\left\{
\begin{array}{rcl}
\ddot{S}_k + \delta \dot{S}_k + (\lambda_k - P m^2) S_k &\!\!\!\! = \!\!\!\!& \gamma_k W^2\sin(\omega t) \quad\textrm{in } (0,\infty)\\
S_k(0) = \dot{S}_k(0) &\!\!\!\! = \!\!\!\!& 0
\end{array}\right.
\end{equation}

A standard computation shows that the explicit solutions of \eqref{ODE-linear} are given by
\[
\begin{array}{l}
S_k(t)=W^2\, \frac{\gamma_k}{(\lambda_k-Pm^2-\omega^2)^2+\delta^2\omega^2} \left\{  \omega e^{\frac{-\delta t}{2}}\left[ \delta \cos\left( \frac{\sqrt{4 (\lambda_k - P m^2) - \delta^2}}{2}  \, t\right) + \right. \right. \vspace{5pt}\\
\left.\left. \frac{\delta^2 - 2 (\lambda_k - P m^2 - \omega^2)}{ \sqrt{4 (\lambda_k - P m^2) - \delta^2} } \sin\left( \frac{\sqrt{4 (\lambda_k - P m^2) - \delta^2}}{2}  \, t\right)\right]  + (\lambda_k -  P m^2- \omega^2) \sin(\omega t) - \delta \omega \cos(\omega t)\right\}
\end{array}
\]

These functions $S_k$ are composed by a damped part (multiplying the negative exponential) and a linear combination of trigonometric functions.
In fact,
$$
\max_t\big|(\lambda_k -  P m^2- \omega^2) \sin(\omega t) - \delta \omega \cos(\omega t)\big|=\sqrt{(\lambda_k-P m^2-\omega^2)^2+\delta^2\omega^2}\, .
$$
Hence, the parameter measuring the amplitude of each of the $S_k$'s is
$$
\frac{\gamma_k}{\sqrt{(\lambda_k-Pm^2-\omega^2)^2+\delta^2\omega^2}}\, .
$$
But we also need to take into account the size of the $w_k$'s (recall that they are normalized in $L^2$, see \eqref{normalized}):
therefore, the amplitude of oscillation of each mode is
\begin{equation}\label{relevant}
A_k(\omega):=\frac{\gamma_k\, \|w_k\|_{L^\infty}}{\sqrt{(\lambda_k-Pm^2-\omega^2)^2+\delta^2\omega^2}}\, .
\end{equation}
It is readily seen that
$$\omega\mapsto A_k(\omega)\quad\mbox{attains its maximum at}\qquad\left\{\begin{array}{ll}
\omega=0 & \mbox{if }\delta^2\ge2(\lambda_k-Pm^2)\\
\omega^2=\lambda_k-Pm^2-\delta^2/2 & \mbox{if }\delta^2<2(\lambda_k-Pm^2).
\end{array}\right.$$

We notice that the eigenfunctions in the family $(i)$ of Proposition \ref{spectrum} with $m$ odd attain their maximum at $(\pi/2,\ell)$.
Then we numerically obtain the results of Table \ref{Linftynorm}.

\begin{table}[ht]
\begin{center}
\begin{tabular}{|c|c|c|c|c|c|c|c|c|c|}
\hline
eigenvalue & $\mu_{1,1}$ & $\mu_{3,1}$ & $\mu_{5,1}$ & $\mu_{7,1}$ & $\mu_{9,1}$ & $\mu_{11,1}$  & $\mu_{13,1}$ & $\mu_{15,1}$ & $\mu_{17,1}$\\
\hline
$\|w_k\|_{L^\infty}\approx$ & 2.764 & 14.37 & 30.92 & 51.23 & 74.73 & 101 & 129.9 & 161.1 & 194.6\\
\hline
\end{tabular}
\caption{Approximate value of the $L^\infty$-norm of some $L^2$-normalized eigenfunctions of \eqref{eq:eigenvalueH2L2}.}\label{Linftynorm}
\end{center}
\end{table}

From now on, we take the values of $\lambda_k$ from Table \ref{tableigen}, the values of $\gamma_k$ from Table \ref{tablegamma}, the values
of $\|w_k\|_{L^\infty}$ from Table \ref{Linftynorm}. Moreover, we fix $\delta=0.58$.\par
In Figure \ref{plotsAk} we represent the functions $A_1$, $A_3$, $A_5$, $A_7$, as defined in \eqref{relevant}, for $\omega\in(0,60)$ and for $P=0$.
\begin{figure}[!h]
\begin{center}
\includegraphics[height=26mm,width=39mm]{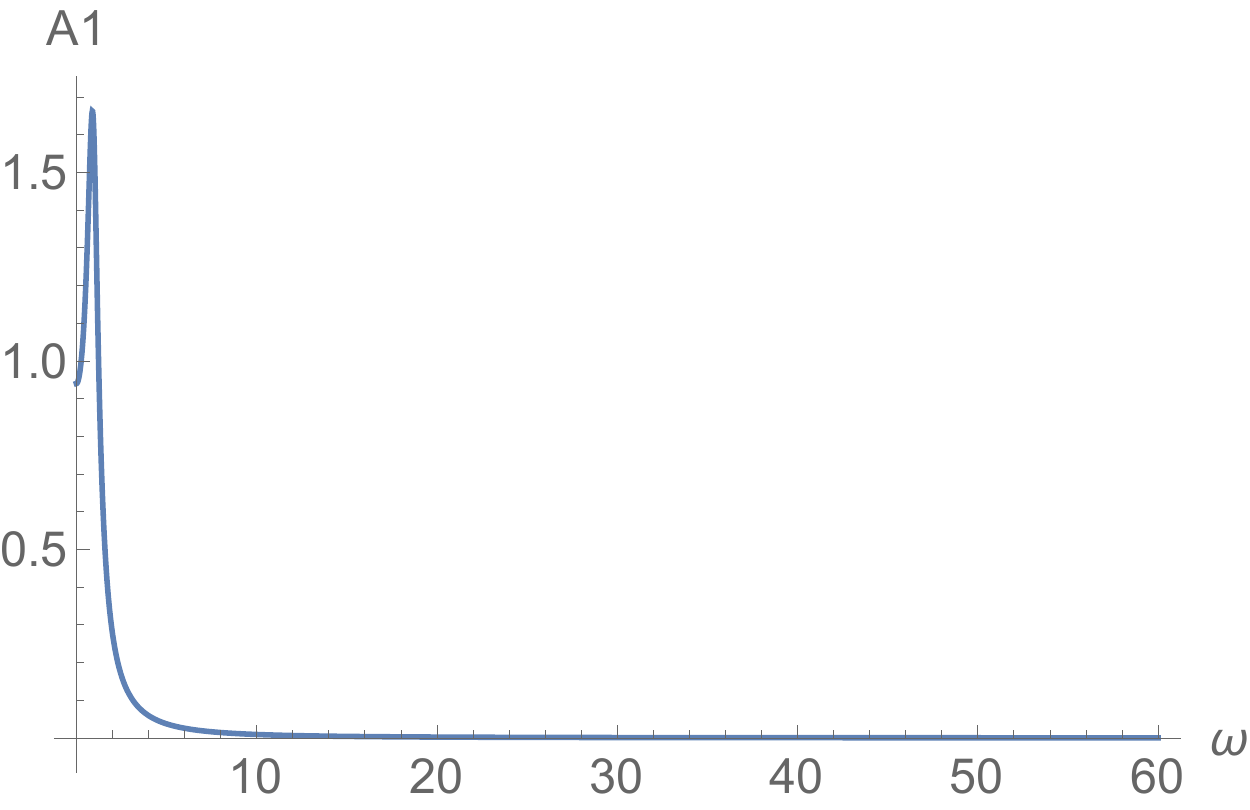}\ \includegraphics[height=26mm,width=39mm]{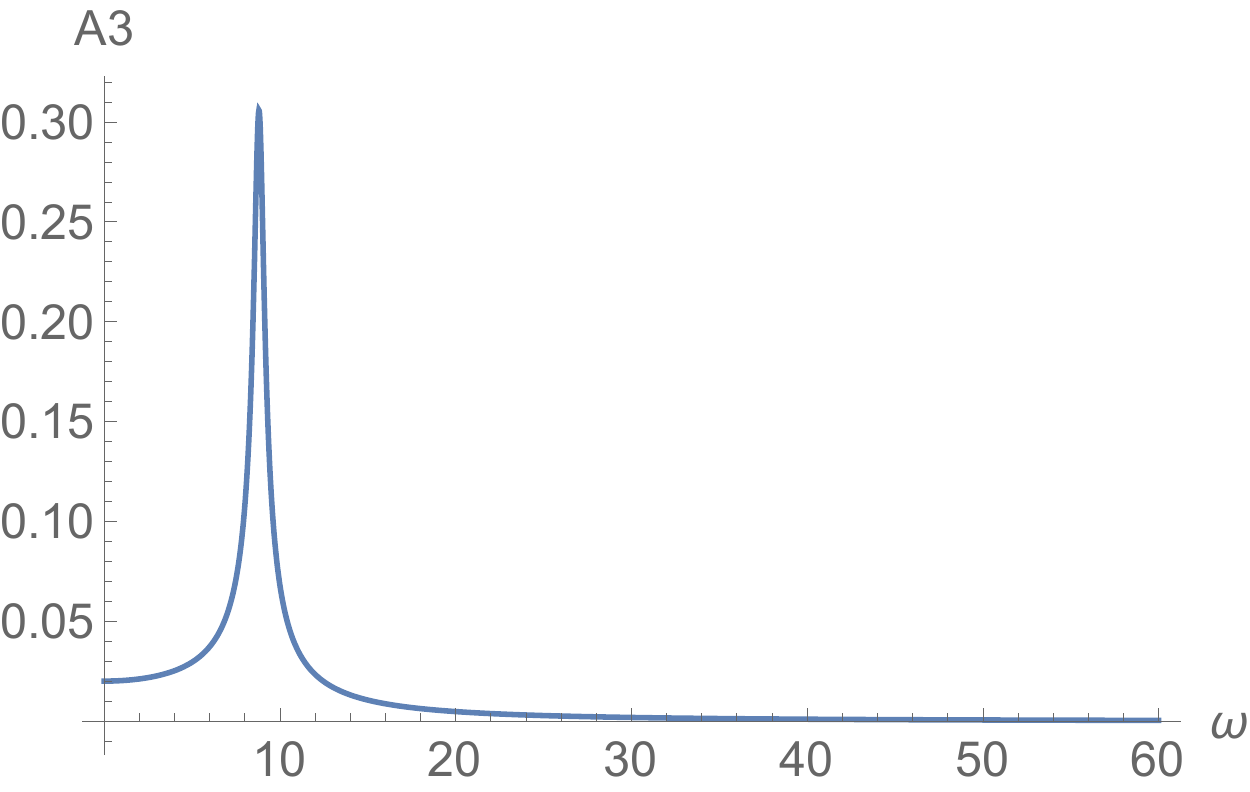}\
\includegraphics[height=26mm,width=39mm]{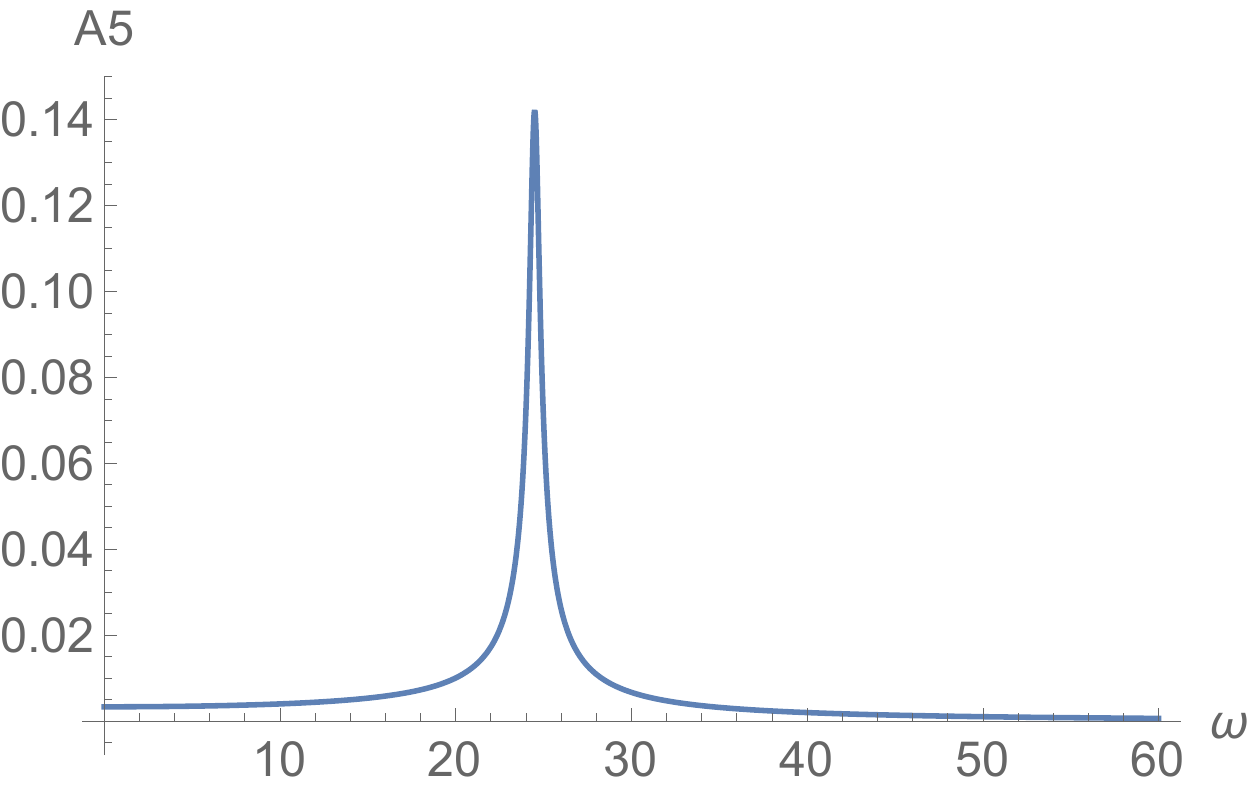}\ \includegraphics[height=26mm,width=39mm]{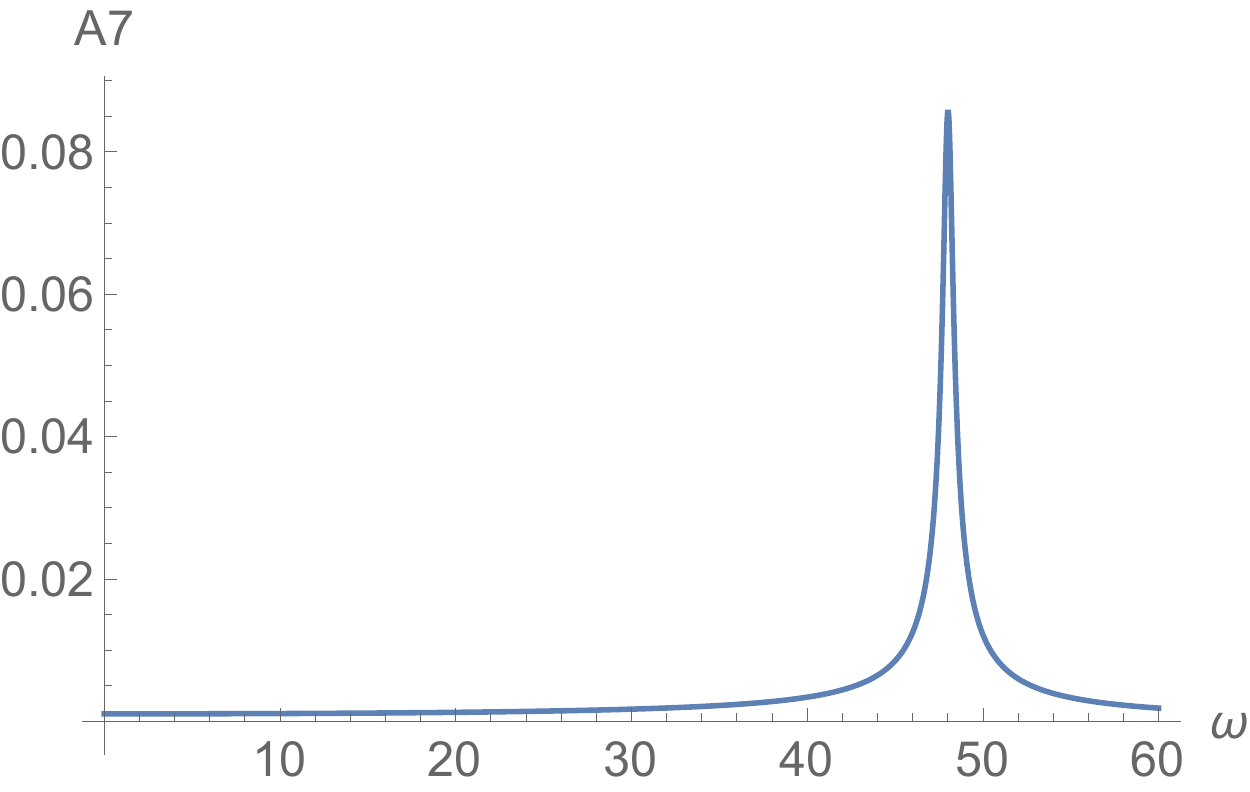}
\caption{Plots of the functions $\omega\mapsto A_k(\omega)$ in \eqref{relevant} for $k=1,3,5,7$.}\label{plotsAk}
\end{center}
\end{figure}
It turns out that these functions all have a steep spike close to their maximum but elsewhere they are fairly small, of several orders of magnitude
less. The height of the spikes is decreasing with respect to $k$ and the maximum is moving to the right (larger $\omega$).\par
We are now in position to give an answer to question (a). For a given $\omega>0$, the prevailing mode $w_k$ is the one maximizing $A_k(\omega)$.
For each $\omega>0$ we numerically determine which $k$ maximizes $A_k(\omega)$. We consider the values $P=0$ and $P=1/2$ and we obtain the results
summarized in Tables \ref{P0} and \ref{P12}, where $k_p$ is the prevailing mode so that
$$k_p=k_p(\omega)\mbox{ is such that }A_{k_p}(\omega)=\max_kA_k(\omega)\, .$$

\begin{table}[ht]
\begin{center}
{\small
\begin{tabular}{|c|c|c|c|c|c|c|c|c|}
\hline
$\omega\!\in$\!\!&\!\!$(0,5.39)$\!\!&\!$(5.39,17.48)$\!\!&\!\!$(17.48,37.17)$\!\!&\!\!$(37.17,64.64)$\!\!&\!\!$(64.64,100)$\!\!&\!\!$(100,143.1)$\!\!&\!\!$(143.1,194.2)$\!\! &\!\!$(194.2,253.1)$\!\!\\
\hline
$k_p$ & 1 & 3 & 5 & 7 & 9 & 11 & 13 & 15\\
\hline
\end{tabular}
}
\caption{Prevailing mode $k_p$ in terms of the frequency $\omega$ (for $P=0$).}\label{P0}
\end{center}
\end{table}

\begin{table}[ht]
\begin{center}
\begin{tabular}{|c|c|c|c|c|c|c|c|c|}
\hline
$\omega\!\in$\!\!&\!\!$(0,5.2)$\!\!&\!$(5.2,17.26)$\!\!&\!\!$(17.26,36.92)$\!\!&\!\!$(36.92,64.4)$\!\!&\!\!$(64.4,99.7)$\!\!&\!\!$(99.7,142.9)$\!\!&\!\!$(142.9,193.9)$\!\! &\!\!$(193.9,252.8)$\!\!\\
\hline
$k_p$ & 1 & 3 & 5 & 7 & 9 & 11 & 13 & 15\\
\hline
\end{tabular}
\caption{Prevailing mode $k_p$ in terms of the frequency $\omega$ (for $P=1/2$).}\label{P12}
\end{center}
\end{table}

It appears evident that the prestressing constant $P$ does not influence too much the prevailing mode, only a slight shift of the intervals.
In order to find the related wind velocity $W$, from Section \ref{quant} we recall that $\omega$ is proportional to $W$:
$$\omega=\frac{{\rm St}}{H}\, W\, .$$

\section{Numerical results}\label{numres}

For our numerical experiments, we first consider external forces $g=g(\xi,t)$ able to identify the
``prevailing mode'' (which should be longitudinal), then we investigate whether this longitudinal mode is stable with respect to the torsional modes.
According to Definition \ref{defstab}, in order to emphasize torsional instability we need to find a particular solution
of \eqref{enlmg} having a torsional component which does not vanish at infinity. So, we select the longitudinal mode candidate to become the prevailing mode, that is, one of
the ($L^2$-normalized) eigenfunctions in Proposition \ref{spectrum} $(i)$. Indeed, according to Table \ref{tableigen}, the least 17 longitudinal eigenvalues are all of the kind $\mu_{m,1}$ ($m=1,...,17$) that are associated to
this kind of eigenfunction. Let us denote by $L_m$ the associated ($L^2$-normalized) longitudinal eigenfunction:
$$L_m(x,y)=C_m\, \left[\big[\mu_{m,1}^{1/2}-(1-\sigma)m^2\big]\, \tfrac{\cosh\Big(y\sqrt{m^2+\mu_{m,1}^{1/2}}\Big)}{\cosh\Big(\ell\sqrt{m^2+\mu_{m,1}^{1/2}}\Big)}+
\big[\mu_{m,1}^{1/2}+(1-\sigma)m^2\big]\, \tfrac{\cosh\Big(y\sqrt{m^2-\mu_{m,1}^{1/2}}\Big)}{\cosh\Big(\ell\sqrt{m^2-\mu_{m,1}^{1/2}}\Big)}\right]\sin(mx)\, ,$$
where $C_m$ is a normalization constant, see \eqref{normalized}. Then we consider the external force in the particular form
\begin{equation}\label{fpart}
g_m(\xi,t)=A\, b\, L_m(\xi)\sn(bt,k)\dn(bt,k)\, ,\quad
b=\sqrt{\mu_{m,1}+\frac{Sm^4A^2}{\delta^2}}\, ,\quad k=\sqrt{\frac{Sm^4A^2}{2(\mu_{m,1}\delta^2+Sm^4A^2)}}\, ,
\end{equation}
where $A>0$ has to be fixed while $\sn$ and $\dn$ are the Jacobi elliptic functions: the function $\sn(bt,k)\dn(bt,k)$ is a modification of the trigonometric
sine which becomes particularly useful when dealing with Duffing equations, see \cite{abramst}. This choice of $g_m$ only slightly modifies the form given in \eqref{newf}.
Then we prove

\begin{proposition}\label{explicit}
Assume that $P=0$ and that $g(\xi,t)=g_m(\xi,t)$ for some integer $m$, as defined in \eqref{fpart}. Then the function
$$U^p(\xi,t)=-\frac{A}{\delta}\, \cn(bt,k)\, L_m(\xi)$$
is a periodic solution of \eqref{enlmg}. Moreover, if $A>0$ is sufficiently small, then $U^p$ is the unique periodic solution
of \eqref{enlmg}: in such case, the prevailing mode $U^p$ is torsionally stable.
\end{proposition}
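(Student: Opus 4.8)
The plan is to reduce \eqref{enlmg} (with $P=0$ and $g=g_m$) to a scalar Duffing-type ODE for the single Fourier mode $L_m$, to verify by direct substitution that $U^p$ solves it precisely because of the choice of $b$ and $k$ in \eqref{fpart}, and finally to invoke Theorem \ref{SCstability} to obtain uniqueness and torsional stability when $A$ is small. For the first step, write $U^p(\xi,t)=h(t)L_m(\xi)$ with $h(t)=-\tfrac{A}{\delta}\cn(bt,k)$. Since $L_m$ is the $L^2$-normalized eigenfunction associated to $\mu_{m,1}$ and has $x$-frequency $m$, one has $(L_m)_{xx}=-m^2L_m$, hence $\|(L_m)_x\|_{L^2}^2=m^2$, $(L_m,v)_{H^2_*}=\mu_{m,1}(L_m,v)_{L^2}$ and $((L_m)_x,v_x)_{L^2}=m^2(L_m,v)_{L^2}$ for every $v\in H^2_*(\Omega)$ (integrating by parts in $x$, the boundary terms vanishing because $v=0$ on $\{0,\pi\}\times[-\ell,\ell]$). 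Moreover $\int_\Omega g_m(\xi,t)w_j(\xi)\,d\xi=0$ for every eigenfunction $w_j\neq L_m$ and $\int_\Omega g_m(\xi,t)L_m(\xi)\,d\xi=Ab\,\sn(bt,k)\dn(bt,k)$. Plugging $u=hL_m$ into the weak formulation \eqref{weakform} — equivalently, using \eqref{infsystem} restricted to the index of $L_m$, all other Fourier coefficients staying $\equiv 0$ — we see that $U^p$ is a (strong) solution of \eqref{enlmg} if and only if
$$\ddot h+\delta\dot h+\mu_{m,1}h+Sm^4h^3=Ab\,\sn(bt,k)\dn(bt,k).$$

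For the second step, recall the Jacobi identities $\tfrac{d}{du}\cn(u,k)=-\sn(u,k)\dn(u,k)$ and $\tfrac{d^2}{du^2}\cn(u,k)=(2k^2-1)\cn(u,k)-2k^2\cn(u,k)^3$ (the latter obtained from $\sn^2=1-\cn^2$ and $\dn^2=1-k^2\sn^2$). Then $\dot h(t)=\tfrac{Ab}{\delta}\sn(bt,k)\dn(bt,k)$, so $\delta\dot h$ already equals the right-hand side above, and it remains to check that $\ddot h+\mu_{m,1}h+Sm^4h^3=0$; collecting the coefficients of $\cn(bt,k)$ and of $\cn(bt,k)^3$, this reduces to the two scalar identities $b^2(1-2k^2)=\mu_{m,1}$ and $2b^2k^2=Sm^4A^2/\delta^2$, which are exactly equivalent to the definitions of $b$ and $k$ in \eqref{fpart} (and in particular force $0<k<1/\sqrt{2}$). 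Since $\cn(\cdot,k)$ is periodic with period $4K(k)$, $K$ being the complete elliptic integral of the first kind, and $\sn(\cdot,k)\dn(\cdot,k)$ has the same period, both $U^p$ and $g_m$ are periodic in $t$; this proves the first claim.

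For the third step, note that $|\sn|\le 1$ and $|\dn|\le 1$ give $\|g_m(t)\|_{L^2}=Ab\,|\sn(bt,k)\dn(bt,k)|\le Ab$, and $b=\sqrt{\mu_{m,1}+Sm^4A^2/\delta^2}\to\sqrt{\mu_{m,1}}$ as $A\to 0$, so the quantity $g_\infty$ in \eqref{smallginfty} satisfies $g_\infty\le Ab\to 0$ as $A\to 0$. Hence, for $A$ sufficiently small, $g_\infty<g_0$ with $g_0$ the constant of Theorem \ref{SCstability}, and that theorem applies: it yields at once that \eqref{enlmg} has a unique periodic solution — which by the second step must be $U^p$ — and, since $L_m$ is longitudinal so that $g_m$ is even with respect to $y$, that $g_m$ makes \eqref{enlmg} torsionally stable. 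As $U^p$ has no torsional component, this is precisely the statement that the prevailing mode $U^p$ is torsionally stable.

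The only delicate point is the bookkeeping in the second step: one must keep track of the standard Jacobi identities carefully — in particular the correct expansion of $\tfrac{d^2}{du^2}\cn$ in powers of $\cn$ — so that the coefficients of $\cn(bt,k)$ and $\cn(bt,k)^3$ vanish separately and reproduce exactly the constants in \eqref{fpart}; everything else follows from results already established, above all Theorem \ref{SCstability}.
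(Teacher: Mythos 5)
Your proposal is correct and follows essentially the same route as the paper: reduce to the Duffing ODE for the single mode $L_m$, observe that the forcing is exactly $\delta\dot h$ so that $h(t)=-\tfrac{A}{\delta}\cn(bt,k)$ solves the damped forced equation, and then invoke the smallness criterion of Theorem \ref{SCstability} (together with the evenness of $g_m$ in $y$) for uniqueness and torsional stability. The only difference is cosmetic: the paper cites Burgreen for the fact that $a\cn(bt,k)$ solves the undamped cubic oscillator with the stated $b$ and $k$, whereas you verify it directly from the Jacobi identities — your two scalar identities $b^2(1-2k^2)=\mu_{m,1}$ and $2b^2k^2=Sm^4A^2/\delta^2$ do match \eqref{fpart}.
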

\begin{proof} Take $b$ and $k$ as in \eqref{fpart} and let $a=-A/\delta$. From \cite{burg} we know that the function $z(t)=a\, \cn(bt,k)$ solves the problem
$$
\ddot{z}(t)+\mu_{m,1}\, z(t)+Sm^4\, z(t)^3=0\, ,\qquad z(0)=a\, ,\qquad \dot{z}(0)=0\, .
$$
Since $\frac{d}{dt}\cn(bt,k)=-b\sn(bt,k)\dn(bt,k)$, the function $z$ also solves
\begin{equation}\label{duff}
\ddot{z}(t)+\delta\, \dot{z}(t)+\mu_{m,1}\, z(t)+Sm^4\, z(t)^3=\delta\, \dot{z}(t)=A\, b\, \sn(bt,k)\dn(bt,k)\, ,\qquad z(0)=a\, ,\qquad \dot{z}(0)=0\, .
\end{equation}
Therefore, $z$ is a periodic solution of \eqref{duff} and, in turn, the function $U^p(\xi,t)=z(t)\, L_m(\xi)$ is a periodic solution
of \eqref{enlmg}.\par
Since $k^2<1/2$ in view of \eqref{fpart} and since $\dn(bt,k)^2+k^2\, \sn(bt,k)^2\equiv1$, by the properties of the Jacobi functions (see \cite{abramst})
we know that
$$
\max_{t>0}\big|\sn(bt,k)\dn(bt,k)\big|=\sqrt{1- k^2}\, .
$$
Hence, recalling that $L_m$ is $L^2$-normalized, we have
$$\sup_{t>0}\int_\Omega g_m(\xi,t)^2\, d\xi=A^2\, b^2\, \max_{t>0}\, \big|\sn(bt,k)\dn(bt,k)\big|^2=\frac{2\mu_{m,1}\delta^2+Sm^4A^2}{2\delta^2}\,
A^2 \, .
$$
Therefore, if $A$ is sufficiently small, then the assumptions of Theorem \ref{exuniq} are
fulfilled and the periodic solution of \eqref{enlmg} is unique and torsionally stable.\end{proof}

From \cite{burg} we also know that the period $\tau$ of the forcing term (and of the solution) is given by the elliptic integral
\begin{equation}\label{periodtau}
\tau=\frac{4}{b}\int_0^{\pi/2}\frac{d\varphi}{\sqrt{1-k^2\sin^2\varphi}}\, .
\end{equation}
Note that for $0<k^2<1/2$ we have $\frac\pi2\approx1.57<\int_0^{\pi/2}\frac{d\varphi}{\sqrt{1-k^2\sin^2\varphi}}<1.86$ so that $\tau$ has small variations.\par
For all our numerical experiments we take $P=0$ and we wish two emphasize two kinds of behaviors of the solution: existence of multiple periodic solutions and
torsional instability.\par\smallskip
{\bf Existence of multiple periodic solutions.} We select one longitudinal eigenfunction $L_m$ associated to
some eigenvalue $\mu_{m,1}$ (for $m=1,...,17$), see Table \ref{tableigen}, and one torsional eigenfunction $T_n$ associated to some eigenvalue
$\nu_{n,2}$ (for $n=1,2,3$), see again Table \ref{tableigen}. We take the external force $g=g_m$ to be as in \eqref{fpart} and initial conditions
\eq{initialc} such as
\begin{equation}\label{icnum}
u(\xi,0) =\alpha L_m(\xi)+\beta T_n(\xi), \qquad u_t(\xi,0) =0\qquad\textrm{in }\Omega
\end{equation}
for some $\alpha,\beta\in\mathbb{R}$. The uniqueness statement of Theorem \ref{exuniq} then shows that the solution $u=u(\xi,t)$ of
\eqref{enlmg} satisfying the initial conditions \eq{icnum} necessarily has the form
$$u(\xi,t)=\phi(t)L_m(\xi)+\psi(t)T_n(\xi)$$
for some $C^2$-functions $\phi$ and $\psi$ satisfying the following nonlinear system of ODE's:
\begin{equation}\label{system}
\left\{\begin{array}{l}
\ddot{\phi}(t)+\delta\dot{\phi}(t)+\mu_{m,1}\phi(t)+Sm^2[m^2\phi(t)^2+n^2\psi(t)^2]\phi(t)=A\, b\, \sn(bt,k)\dn(bt,k)\\
\ddot{\psi}(t)+\delta\dot{\psi}(t)+\nu_{n,2}\psi(t)+Sn^2[m^2\phi(t)^2+n^2\psi(t)^2]\psi(t)=0
\end{array}\right.
\end{equation}
while the initial conditions \eqref{icnum} become
\begin{equation}\label{icsyst}
\phi(0)=\alpha\, ,\quad\psi(0)=\beta\, ,\quad\dot{\phi}(0)=\dot{\psi}(0)=0\, .
\end{equation}
We notice that if $\beta=0$ then the solution of \eqref{system}-\eqref{icsyst} satisfies $\psi\equiv0$, which
means that there is no torsional component at all. When $A$ is small, Proposition \ref{explicit} states that $U^p$ is the unique periodic
solution of \eqref{enlmg} and that $g_m$ in \eqref{fpart} makes \eqref{enlmg} torsionally stable. Our strategy then consists in taking
$\beta>0$ and studying the behavior of the solution of \eqref{system}-\eqref{icsyst} when $A$ becomes large, aiming to emphasize
multiplicity of periodic solutions.\par
If we take $\alpha=-A/\delta$ and $\beta=0$, then the solution of \eqref{system}-\eqref{icsyst} is given by
$$\phi(t)=-\frac{A}{\delta}\, \cn(bt,k)\, ,\qquad\psi(t)\equiv0$$
while
\begin{equation}\label{UP}
U^p(\xi,t)=-\frac{A}{\delta}\, \cn(bt,k)\, L_m(\xi)
\end{equation}
is a periodic solution of \eqref{enlmg}, see Proposition \ref{explicit}. If it was the only periodic solution, then Theorem \ref{SCstability}
would ensure that $(\phi-U^p,\psi)(t)\to0$ uniformly as $t\to\infty$ for any solution of \eqref{system}. Hence, in order to display multiple periodic solutions
of \eqref{enlmg} it suffices to exhibit a solution of \eqref{system} that does not satisfy this condition.\par
In Figure \ref{threeplots} we display the graphs of $U^p$ and of the solution $(\phi,\psi)$ of \eqref{system}-\eqref{icsyst} with
\begin{equation}\label{param1}
m=2\, ,\ n=1\, ,\ \delta=0.58\, ,\ S=279\, ,\ A=0.2645\, ,\ b\mbox{ as in \eqref{fpart}}\, ,\ \alpha=0\, ,\ \beta=0.01\, .
\end{equation}
Note that the ``frequency'' of $\psi$ is considerably larger than the frequency of $\phi$. This is due to the fact that $\nu_{1,2}\gg\mu_{2,1}$,
see Table \ref{tableigen}.
\begin{figure}[!h]
\begin{center}
\includegraphics[height=34mm,width=78mm]{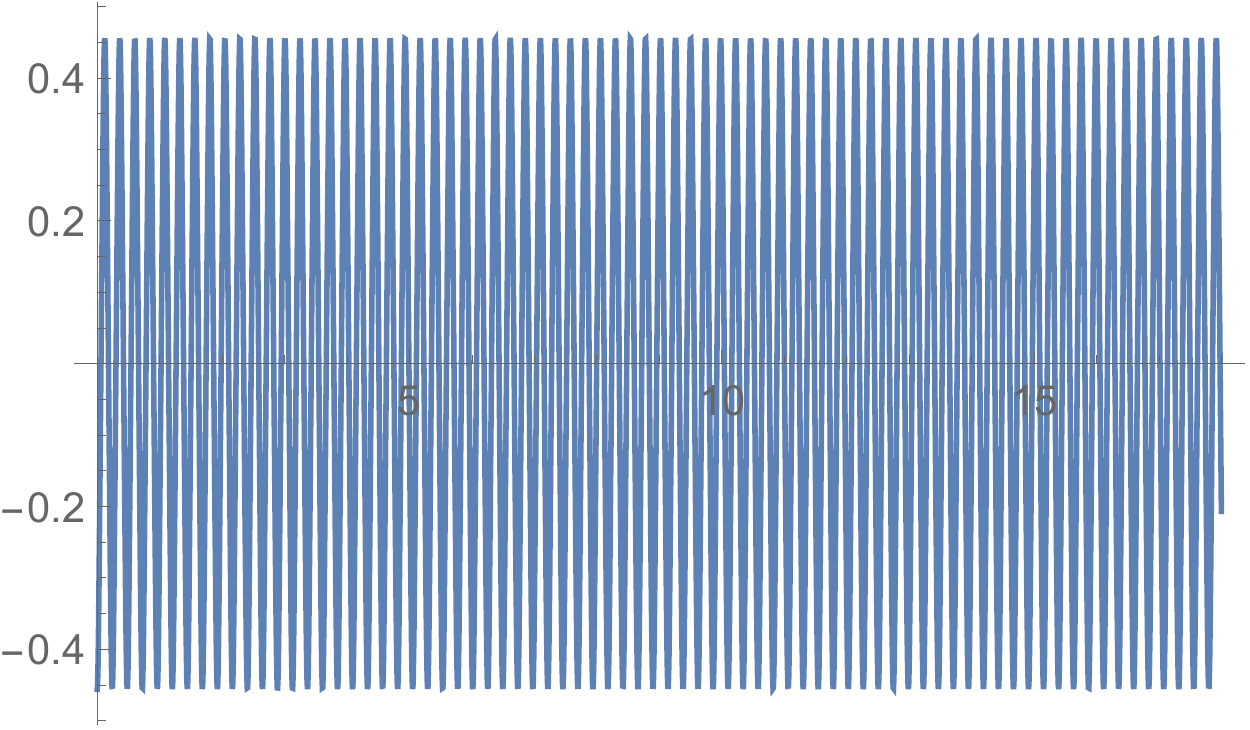}\ \includegraphics[height=34mm,width=78mm]{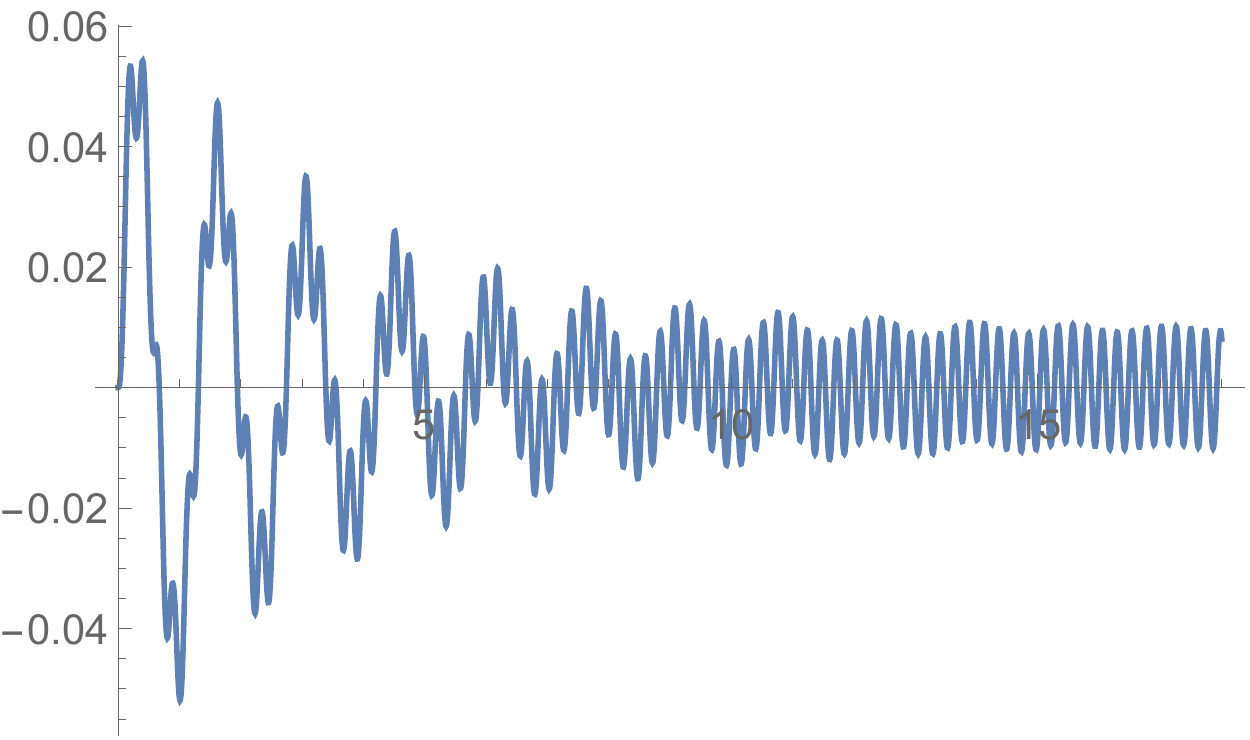}\\
\includegraphics[height=34mm,width=78mm]{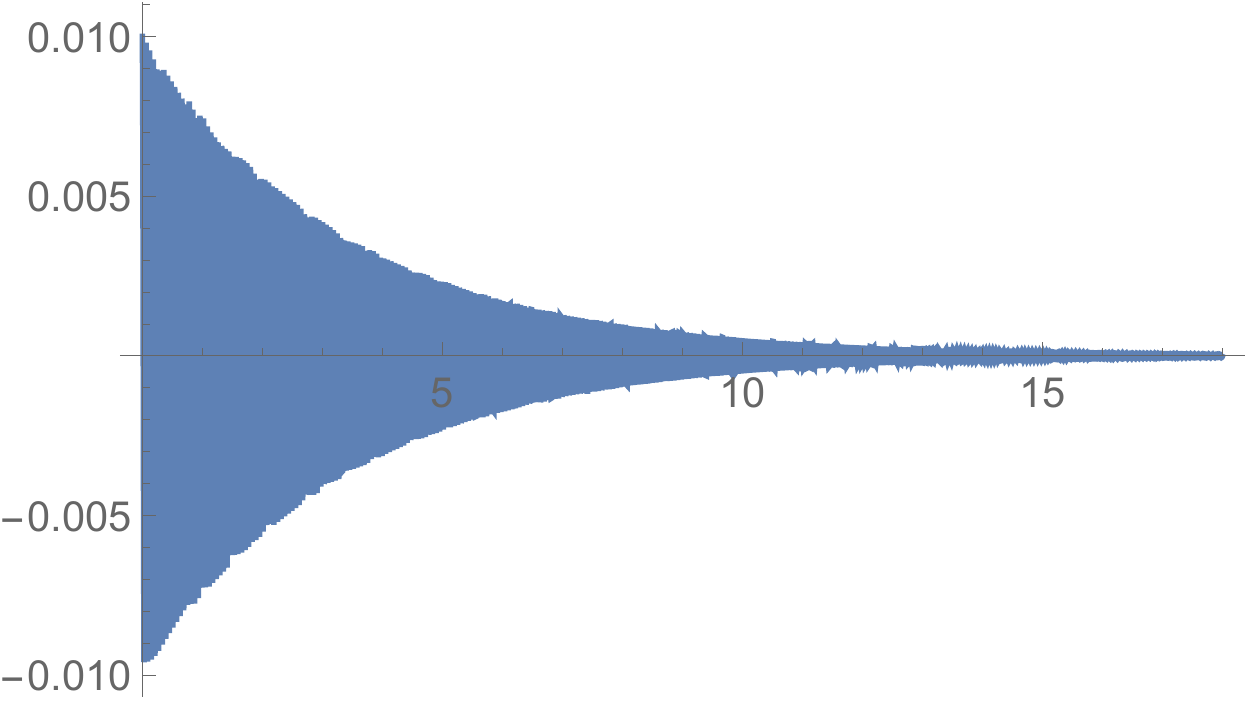}\ \includegraphics[height=34mm,width=78mm]{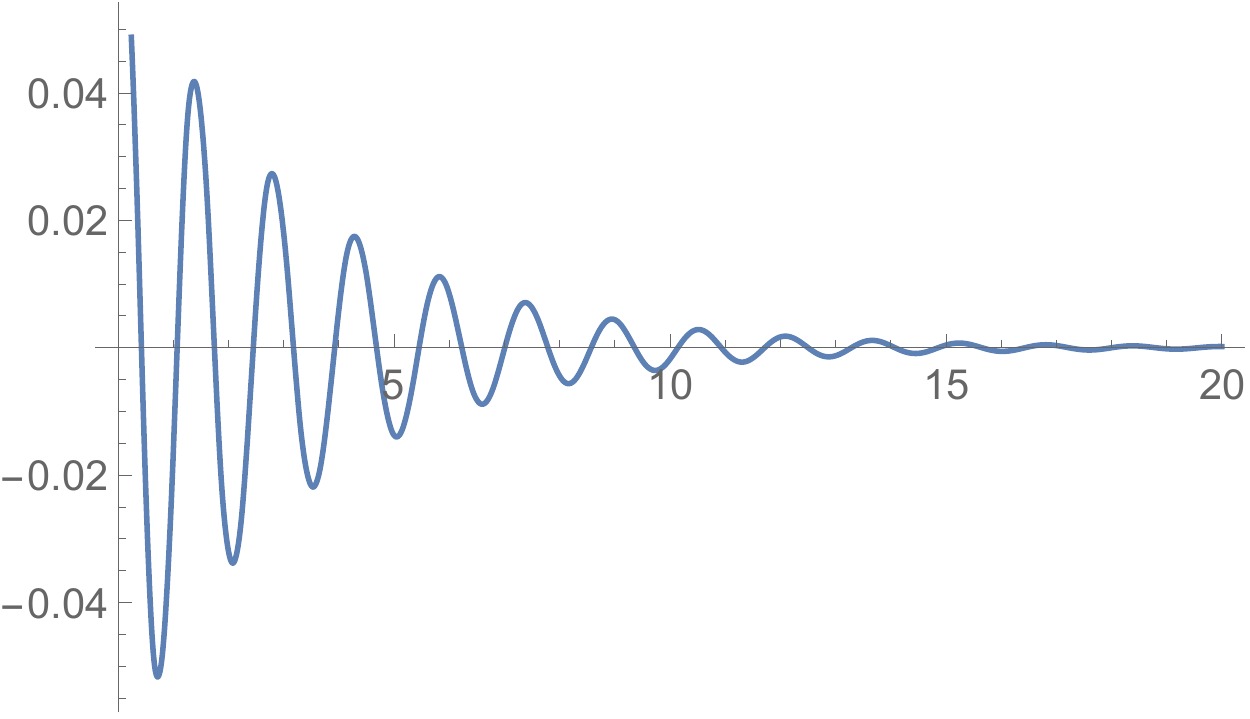}
\caption{Plot of the functions $U^p$ in \eqref{UP} (top left), $\phi$ (top right) and $\psi$ (bottom left) solving \eqref{system}-\eqref{icsyst},
and $t\mapsto\phi(t)-\phi(t-\tau)$ (bottom right), when \eqref{param1} holds.}\label{threeplots}
\end{center}
\end{figure}

It appears also quite visible that $\phi-U^p$ {\em does not} converge uniformly to $0$, see in particular the amplitudes on the vertical axis.
In the bottom right picture of Figure \ref{threeplots} we plot the graph of $\phi(t)-\phi(t-\tau)$, where $\tau$ is as in \eqref{periodtau}, that is, the period
of the force $g_m$ in \eqref{fpart} and in \eqref{system}. This plot seems to say that $\phi$ is converging to a periodic regime and this would
prove that the attractor of \eqref{enlmg} consists of at least two periodic solutions. Moreover, since $\psi\to0$ uniformly (bottom left plot in Figure
\ref{threeplots}), this would prove that \emph{``multiplicity of periodic solutions and torsional instability are not equivalent facts''}.

Finally, it is worth emphasizing that, by perturbing slightly the initial data $\alpha=-A/\delta$ and $\beta=0$ in \eqref{icsyst}, it was quite
evident that the periodic solution $U^p$ was unstable, the $\phi$-component always behaved as in the top right picture of Figure \ref{threeplots}
for large $t$.

\begin{figure}[!h]
\begin{center}
\includegraphics[height=34mm,width=78mm]{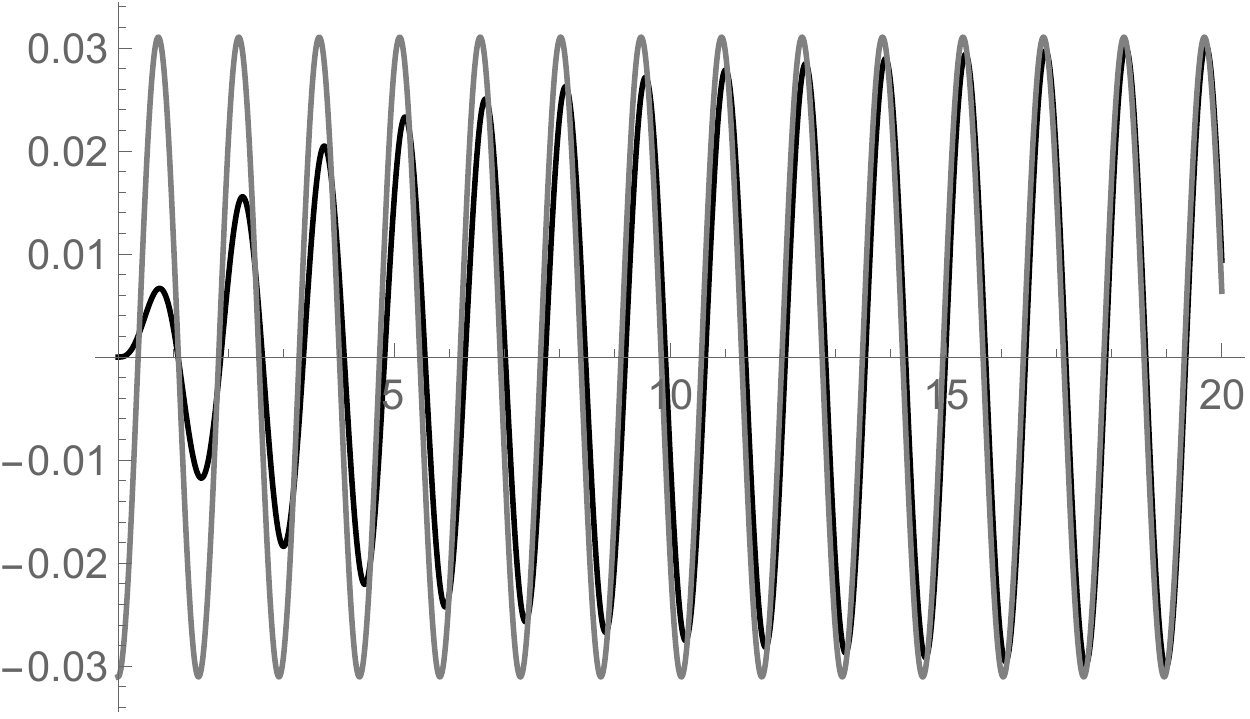}
\caption{The functions $U^p$ in \eqref{UP} (gray) and $\phi$ (black) solving \eqref{system}-\eqref{icsyst} when \eqref{param2} holds.}\label{doublegraph}
\end{center}
\end{figure}

We then diminished the amplitude $A$, modified $b$ according to \eqref{fpart}, and maintained all the other parameters as in \eqref{param1}; we took
\begin{equation}\label{param2}
m=2\, ,\ n=1\, ,\ \delta=0.58\, ,\ S=279\, ,\ A=0.018\, ,\ b\mbox{ as in \eqref{fpart}}\, ,\ \alpha=0\, ,\ \beta=0.01\, .
\end{equation}
In Figure \ref{doublegraph} we plot both the graphs of $U^p$ in \eqref{UP} (gray) and $\phi$ solving \eqref{system}-\eqref{icsyst}
(black). It appears that now $\phi$ approaches very quickly $U^p$. This probably means that the amplitude $A$ is sufficiently small so that
Theorem \ref{SCstability} applies and $U^p$ is the unique (and hence, stable) periodic solution. Note that the frequency is considerably
smaller than in the plots of Figure \ref{threeplots}.\par
We performed several other experiments for different couples of integers $(m,n)$, thereby changing the modes involved in the stability analysis, and we
always found qualitatively similar results: two periodic solutions for large $A$ and a (probably) unique periodic solution for small $A$.\par\smallskip
{\bf Torsional instability.} We were not able to detect any torsional instability for \eqref{system}, even by taking very large $A$. The reason
seems to be that taking $b$ as in \eqref{fpart} leaves too little freedom: the frequency is directly related to the amplitude. And large
frequencies are difficult to handle numerically due to large values of the derivatives of the solutions. Therefore, we considered the more
standard problem
\begin{equation}\label{system2}
\left\{\begin{array}{l}
\ddot{\phi}(t)+\delta\dot{\phi}(t)+\mu_{m,1}\phi(t)+Sm^2[m^2\phi(t)^2+n^2\psi(t)^2]\phi(t)=A\, \sin(\omega t)\\
\ddot{\psi}(t)+\delta\dot{\psi}(t)+\nu_{n,2}\psi(t)+Sn^2[m^2\phi(t)^2+n^2\psi(t)^2]\psi(t)=0
\end{array}\right.
\end{equation}
for some mutually independent values of the amplitude $A$ and of the frequency $\omega$. In the left plot of Figure \ref{instab} we depict the graph of
the $\psi$-component of the solution $(\phi,\psi)$ of \eqref{system2} and \eqref{icsyst} with the following choice of the parameters
\begin{equation}\label{param3}
m=2\, ,\ n=2\, ,\ \delta=0.4\, ,\ S=250\, ,\ A=62500\, ,\ \omega=275\, ,\ \alpha=0\, ,\ \beta=0.01\, .
\end{equation}
One clearly sees that $\psi(t)\not\to0$ as $t\to\infty$, which means torsionally instability. We performed several other experiments by considering
different couples $(m,n)$ and obtained qualitatively the same graph with $\psi$ growing up in some disordered way. We plotted
the graphs of $s\mapsto\psi(t)-\psi(t-2k\pi/\omega)$ (for some $k$) that also displayed a fairly disordered behavior, showing that no periodicity seems
to appear. If confirmed, this would show that the $\omega$-limit set of \eqref{enlmg} does not only contain periodic solutions for large $g$.

\begin{figure}[!h]
\begin{center}
\includegraphics[height=34mm,width=78mm]{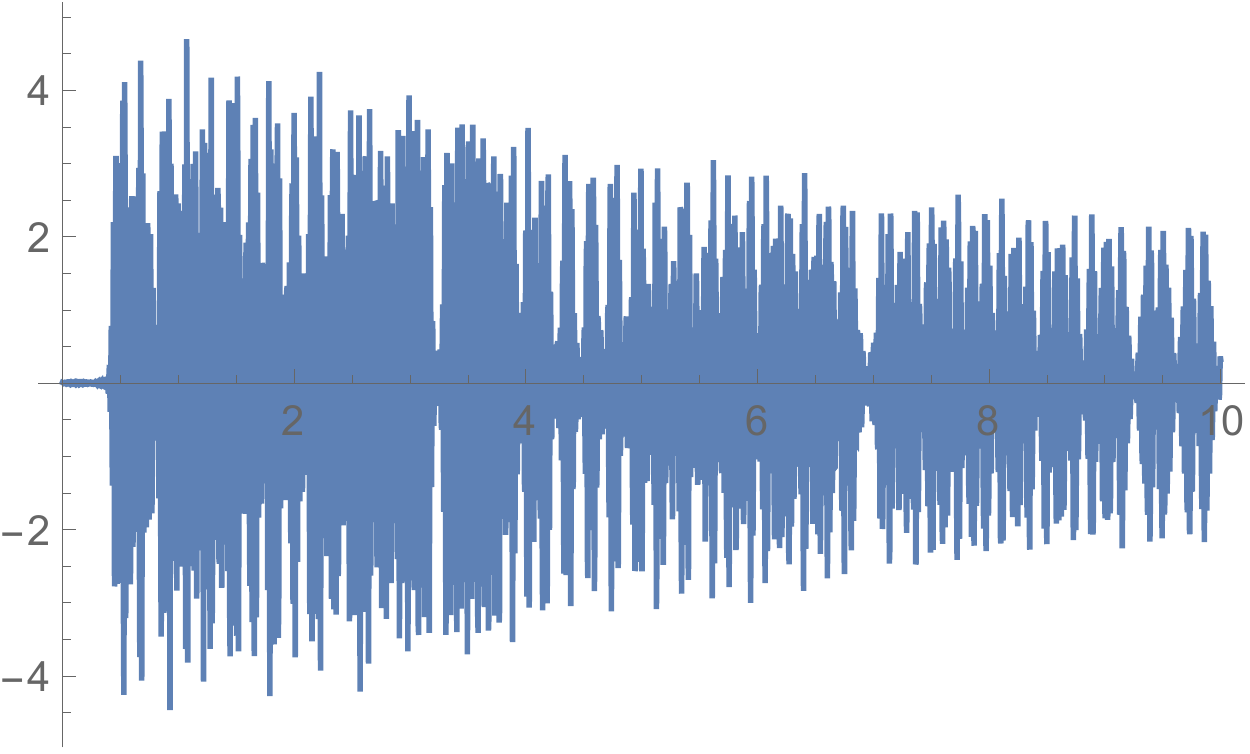}\ \includegraphics[height=34mm,width=78mm]{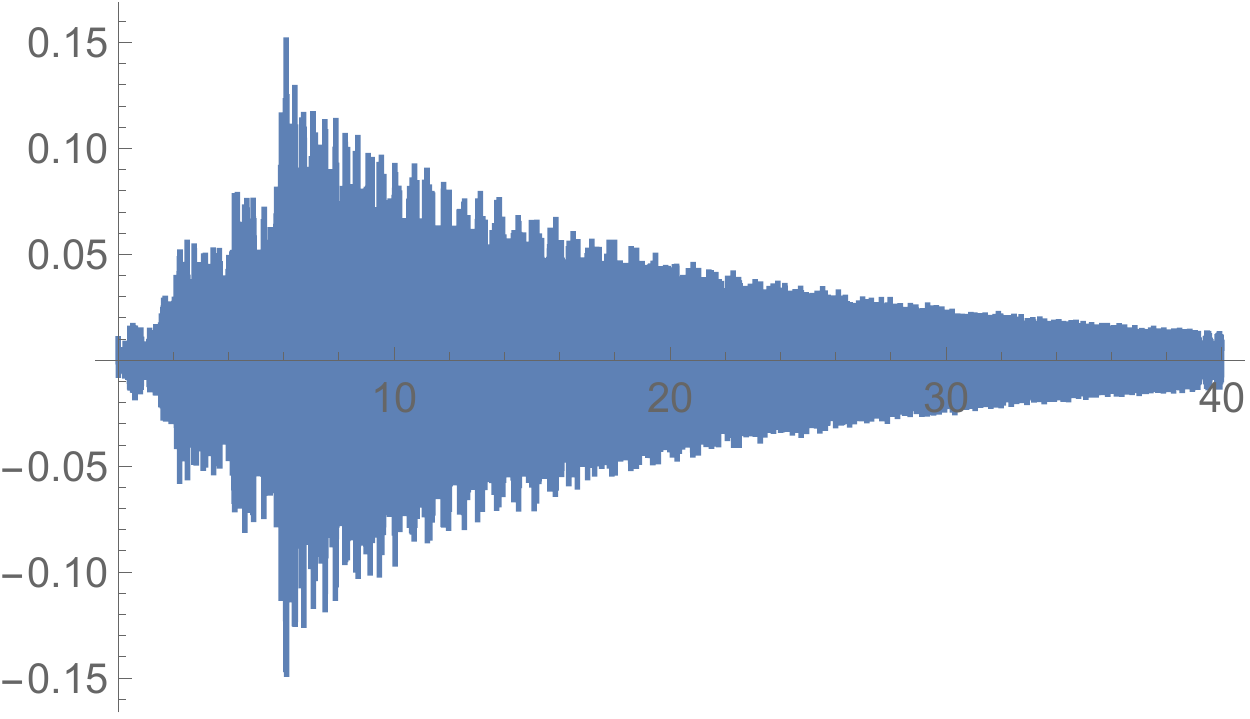}
\caption{Left: plot of the $\psi$-component of the solution of \eqref{system}-\eqref{icsyst} when \eqref{param3} holds.
Right: local torsional instability and eventual stability obtained when \eqref{param4} holds.}\label{instab}
\end{center}
\end{figure}

{\bf Local torsional instability and eventual stability.} The classification of Definition \ref{defstab} is mathematically exhaustive since a force $g$
makes the system either torsionally stable or torsionally unstable. However, some forces making the system stable may still be dangerous from a physical
(engineering) point of view and we need to focus our attention to a particular class of solutions.

\begin{definition}[Local torsional instability and eventual stability]\label{defstab2}
We say a solution $u$ of \eqref{enlmg} is locally torsionally unstable and eventually stable if
$$\lim_{t\to\infty}(\|u_t^T(t)\|_{L^2}+\|u^T(t)\|_{H^2_*})=0$$
and if one of its torsional Fourier coefficients, say $h_n=h_n(t)$, satisfies
$$
\dot{h}_n(0)=0\, ,\quad|h_n(t)|>10\, |h_n(0)|\ge0.1\mbox{ for some }t>0\, .$$
\end{definition}

This means that, although all the torsional components of the solution tend asymptotically to $0$, the amplitude of one torsional component has grown of
at least one order of magnitude at some time $t$ compared to the initial datum which was bounded away from zero: moreover, this is not due to the initial
kinetic datum $\dot{h}_n(0)$ since it is set to zero.\par
In the right picture of Figure \ref{instab} one sees an example of this situation: we depict there the graph of the $\psi$-component of the solution of
\eqref{system2} and \eqref{icsyst} with the following values of the parameters
\begin{equation}\label{param4}
m=4\, ,\ n=2\, ,\ \delta=0.12\, ,\ S=258\, ,\ A=6400\, ,\ \omega=160.8\, ,\ \alpha=0\, ,\ \beta=0.01\, .
\end{equation}
It appears that $\psi(t)$ grows up until about $0.15$, that is, 15 times as much as its initial value. Then it tends to vanish as $t\to\infty$.

\section{Energy estimates}\label{enest}

In this section we use the family of energies
$$E_\alpha(t):=\frac12 \|u_t(t)\|_{L^2}^2 +\frac12 \|u(t)\|_{H^2_*}^2 -\frac{P}{2}\|u_x(t)\|_{L^2}^2+\frac{S}{4}\|u_x(t)\|_{L^2}^4+\alpha
\into u\xit u_t\xit \, d\xi,$$
where $\alpha>0$, and we derive bounds for $E_\alpha$. The aim is to obtain bounds for the solutions of \eqref{enlmg} from the energy bounds.
Before starting, let us rigorously justify once forever the computations that follow. The regularity of weak solutions {\em does not allow}
to take $v=u_t$ in \eqref{weakform}. Therefore, we need to justify the differentiation of the energies $E_\alpha$, a computation that we use throughout the paper.
In this respect, let us recall a general result, see \cite[Lemma 4.1]{temam}.

\begin{lemma}\label{justification}
Let $(V,H,V')$ be a Hilbert triple. Let $a$ be a coercive bilinear continuous form on $V$, associated with the continuous isomorphism $A$ from $V$ to $V'$ such that
$a(u,v)=\langle Au,v\rangle$ for all $u,v\in V$. If $w$ is such that
$$w\in L^2(0,T;V)\, ,\quad w_t\in L^2(0,T;H)\, ,\quad w_{tt}+Aw\in L^2(0,T;H)\, ,$$
then, after modification on a set of measure zero, $w\in C^0([0,T],V)$, $w_t\in C^0([0,T],H)$ and, in the sense of distributions on $(0,T)$,
$$\langle w_{tt}+Aw,w_t\rangle=\frac12 \frac{d}{dt}\big(\|w_t\|_{L^2}^2+a(w,w)\big)\, .$$
\end{lemma}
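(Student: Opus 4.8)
The plan is to reduce everything to the abstract wave equation $w_{tt}+Aw=f$ with $f:=w_{tt}+Aw\in L^2(0,T;H)$, establish the energy identity first in integrated form, and only afterwards bootstrap it to the claimed strong continuity. First I would record the two ``cheap'' regularity facts that follow without using the equation in an essential way: since $w_t\in L^2(0,T;H)\hookrightarrow L^1(0,T;H)$, after modification on a null set $w\in C^0([0,T],H)$; and since $w_{tt}=f-Aw\in L^2(0,T;V')$ (here $A:V\to V'$ is bounded and $f\in L^2(0,T;H)\hookrightarrow L^2(0,T;V')$), likewise $w_t\in C^0([0,T],V')$. Neither of these controls $w$ in $V$ or $w_t$ in $H$; that is precisely what the identity must provide.

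The core step is a time regularisation. Fix $0<\varepsilon<T/2$ and set $w_\rho:=\rho*w$, convolution in $t$ with a standard mollifier of radius $<\varepsilon$, so that $w_\rho$ is smooth in $t$ with values in $V$ on $(\varepsilon,T-\varepsilon)$ and $w_\rho\to w$ in $L^2_{\mathrm{loc}}(V)$, $(w_\rho)_t\to w_t$ in $L^2_{\mathrm{loc}}(H)$, $(w_\rho)_{tt}+Aw_\rho=f_\rho\to f$ in $L^2_{\mathrm{loc}}(H)$. For the smooth $w_\rho$ the chain rule applies classically and, using the symmetry of $a$,
\[
\frac{d}{dt}\Big(\tfrac12\|(w_\rho)_t\|_{L^2}^2+\tfrac12\,a(w_\rho,w_\rho)\Big)=\big\langle(w_\rho)_{tt}+Aw_\rho,(w_\rho)_t\big\rangle=(f_\rho,(w_\rho)_t)_{L^2}.
\]
Integrating between two interior times and letting $\rho\to0$ gives, for a.e.\ $s$ and a.e.\ $t$,
\[
\tfrac12\|w_t(t)\|_{L^2}^2+\tfrac12\,a(w(t),w(t))=\tfrac12\|w_t(s)\|_{L^2}^2+\tfrac12\,a(w(s),w(s))+\int_s^t(f,w_t)_{L^2}\,d\tau .
\]
Thus $t\mapsto\tfrac12\|w_t(t)\|_{L^2}^2+\tfrac12\,a(w(t),w(t))$ coincides a.e.\ with an absolutely continuous function $G$; coercivity of $a$ then forces $w\in L^\infty(0,T;V)$ and $w_t\in L^\infty(0,T;H)$, and, after replacing this map by $G$, the integrated identity holds for all $s,t\in[0,T]$.

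It remains to upgrade weak continuity to strong. By the standard Strauss-type lemma, $w\in C^0([0,T],H)\cap L^\infty(0,T;V)$ together with reflexivity of $V$ gives that $w:[0,T]\to V$ is weakly continuous, and similarly $w_t:[0,T]\to H$ is weakly continuous; hence $t\mapsto\|w_t(t)\|_{L^2}^2$ and $t\mapsto a(w(t),w(t))$ are lower semicontinuous on $[0,T]$, while their sum $2G$ is continuous. A lower semicontinuous function whose sum with another lower semicontinuous function is continuous is itself continuous (it is also upper semicontinuous, being the difference of a continuous function and a lower semicontinuous one), so $t\mapsto\|w_t(t)\|_{L^2}$ and $t\mapsto a(w(t),w(t))^{1/2}$ are continuous. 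Since in a Hilbert space weak convergence plus convergence of norms implies strong convergence, and $a(\cdot,\cdot)$ is an inner product equivalent to that of $V$, this yields $w_t\in C^0([0,T],H)$ and $w\in C^0([0,T],V)$. Differentiating the integrated identity in $\mathcal{D}'(0,T)$ finally gives $\tfrac12\frac{d}{dt}(\|w_t\|_{L^2}^2+a(w,w))=(f,w_t)_{L^2}=\langle w_{tt}+Aw,w_t\rangle$, as claimed.

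The passage to the limit in the mollified identity is routine. The two delicate points are the endpoints $t=0,T$, which I would dispose of by first working on $[\varepsilon,T-\varepsilon]$ and then invoking the globally defined continuous $G$, and — the real substance of the lemma — the weak-to-strong continuity step: this is where the energy identity is genuinely used, and the semicontinuity-plus-continuous-sum argument above is the crux of the proof.
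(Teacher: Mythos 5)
You should first be aware that the paper does not prove this lemma at all: it is recalled verbatim from \cite[Lemma 4.1]{temam}, so there is no in-paper argument to compare against. Your strategy --- mollify in time, pass to the limit in the energy identity, obtain weak continuity of $w$ into $V$ and of $w_t$ into $H$, then upgrade to strong continuity via convergence of norms --- is exactly the classical proof of this result, and your explicit remark that the symmetry of $a$ is needed is correct (the statement omits it, but it is implicit in the application, where $a=(\cdot,\cdot)_{H^2_*}$).

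There is, however, one genuine gap, located at the sentence ``while their sum $2G$ is continuous''. Write $E(t):=\tfrac12\|w_t(t)\|_{L^2}^2+\tfrac12 a(w(t),w(t))$ for the pointwise energy of the weakly continuous representatives you have fixed. The mollification argument only shows $E=G$ \emph{almost everywhere}; the sum of your two lower semicontinuous functions is $2E$, not $2G$, and a lower semicontinuous function may lie strictly below its continuous a.e.\ representative on a null set (e.g.\ $\phi_1\equiv 1$ off $\{0\}$, $\phi_1(0)=0$ is l.s.c.\ and a.e.\ equal to the constant $1$). From lower semicontinuity, $E=G$ a.e.\ and the continuity of $G$ you get $E\le G$ \emph{everywhere}, but the reverse inequality $E(t_0)\ge G(t_0)$ at \emph{every} $t_0$ --- which is precisely what your ``two l.s.c.\ functions with continuous sum'' step presupposes --- does not follow from anything you have established; it is the real crux of the lemma. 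A standard way to close it is a polarization/time-translation argument: for a.e.\ $h>0$ pass to the limit in the mollified identity for the bilinear quantity $(w_t(t),w_t(t+h))_{L^2}+a(w(t),w(t+h))$ between a good time $s_*$ and the \emph{arbitrary} time $t_0$ (this is legitimate because the translated factors converge \emph{strongly} at $t_0$ for a.e.\ $h$, so each term is a product of a weakly and a strongly convergent sequence), and then let $h\to0$ using the weak continuity already proved; this yields $E(t_0)=G(t_0)$ for every $t_0$, after which your semicontinuity argument and the ``weak convergence plus convergence of norms'' conclusion go through verbatim. Alternatively, one can identify $w$ with the mild solution of the first-order system generated by $A$ on $V\times H$, for which strong continuity is automatic.
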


We may now derive some energy bounds in terms of $g_\infty$, see \eqref{smallginfty}.

\begin{lemma}\label{energybound-PDE}
Assuming that $0\le P<\lambda_1$ and that $u$ is a solution of \eqref{enlmg}, we have
\begin{enumerate}[(a)]
\item for $\delta^2\le 4(\lambda_1-P)$,
\begin{equation}
\label{limtimeestimateforEdelta}
E_{\delta/2}(\infty):=\limsup_{t\to\infty}E_{\delta/2}(t)\le \frac{2\, g_{\infty}^2}{\delta^2}\, ,\end{equation}
\item for $\delta^2\ge4(\lambda_1-P)$,
$$
E_\mu(\infty):=\limsup_{t\to\infty}E_\mu(t)\le\frac{g_{\infty}^2}{2(\lambda_1-P)}\, ,
$$
where $\mu:=\frac\delta2-\frac12\sqrt{\delta^2-4(\lambda_1-P)}$.
\end{enumerate}
\end{lemma}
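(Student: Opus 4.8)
The plan is to differentiate $t\mapsto E_\alpha(t)$ along a weak solution of \eqref{enlmg}, for the indicated value of $\alpha$, and to reduce everything to a scalar differential inequality which can be integrated using \eqref{smallginfty}.

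\emph{Step 1 (energy identity).} First I would legitimize the formal computation by Lemma \ref{justification}, applied to the Hilbert triple $(H^2_*(\Omega),L^2(\Omega),(H^2_*(\Omega))')$ with the bilinear form $a_0(U,V):=(U,V)_{H^2_*}-P(U_x,V_x)_{L^2}$, which is coercive on $H^2_*(\Omega)$ by \eqref{embedding} since $P<\lambda_1$; denote by $A_0$ the associated isomorphism. A weak solution satisfies $u\in C^0([0,T];H^2_*)\cap C^1([0,T];L^2)$ and, by the equation, $u_{tt}+A_0u=g-\delta u_t-S\|u_x\|_{L^2}^2\,u_{xx}\in C^0([0,T];L^2)\subset L^2(0,T;L^2)$ (the Berger term lies in $C^0([0,T];L^2)$ since $t\mapsto\|u_x(t)\|_{L^2}^2$ is continuous and $u_{xx}\in C^0([0,T];L^2)$). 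Hence Lemma \ref{justification} applies; testing \eqref{weakform} with $v=u_t$ (now legitimate) and with $v=u$, together with $(u_x,u_{tx})_{L^2}=\tfrac12\tfrac{d}{dt}\|u_x\|_{L^2}^2$, gives in $\mathcal D'(0,T)$
\[
E_\alpha'=(\alpha-\delta)\|u_t\|_{L^2}^2-\alpha\delta\,(u,u_t)_{L^2}-\alpha\big(\|u\|_{H^2_*}^2-P\|u_x\|_{L^2}^2\big)-\alpha S\|u_x\|_{L^2}^4+(g,u_t+\alpha u)_{L^2}.
\]

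\emph{Step 2 (case (a): $\delta^2\le4(\lambda_1-P)$, $\alpha=\delta/2$).} With this choice the $\|u_t\|_{L^2}^2$ and $(u,u_t)_{L^2}$ contributions to $E_{\delta/2}'+\delta E_{\delta/2}$ cancel, so that $E_{\delta/2}'+\delta E_{\delta/2}=-\tfrac{\delta S}{4}\|u_x\|_{L^2}^4+(g,u_t+\tfrac\delta2 u)_{L^2}\le(g,u_t+\tfrac\delta2 u)_{L^2}$. On the other hand, completing the square and using $\|u\|_{H^2_*}^2-P\|u_x\|_{L^2}^2\ge(\lambda_1-P)\|u\|_{L^2}^2\ge\tfrac{\delta^2}{4}\|u\|_{L^2}^2$ (by \eqref{embedding} and the hypothesis on $\delta$), one gets $E_{\delta/2}(t)\ge\tfrac12\|u_t+\tfrac\delta2 u\|_{L^2}^2\ge0$; hence $(g,u_t+\tfrac\delta2 u)_{L^2}\le\|g\|_{L^2}\sqrt{2E_{\delta/2}}$ and $E_{\delta/2}'+\delta E_{\delta/2}\le\sqrt2\,\|g\|_{L^2}\sqrt{E_{\delta/2}}$. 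Writing $z:=\sqrt{E_{\delta/2}}$ this reads $z'+\tfrac\delta2 z\le\tfrac1{\sqrt2}\|g\|_{L^2}$; multiplying by $e^{\delta t/2}$, integrating, and using \eqref{smallginfty} yields $\limsup_{t\to\infty} z(t)\le\tfrac{\sqrt2}{\delta}\,g_\infty$, i.e.\ \eqref{limtimeestimateforEdelta}.

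\emph{Step 3 (case (b): $\delta^2\ge4(\lambda_1-P)$, $\alpha=\mu$).} Here $\mu$ and $\delta-\mu$ are the real roots of $t^2-\delta t+(\lambda_1-P)=0$, so $\mu(\delta-\mu)=\lambda_1-P$ and $0<\mu\le\tfrac\delta2\le\delta-\mu$ (in particular $\mu^2\le\lambda_1-P$). Setting $v:=u_t+\mu u$, the equation becomes the first-order (in time) relation $v_t+(\delta-\mu)v=g-Ru$, where $\langle Ru,\varphi\rangle:=(u,\varphi)_{H^2_*}-(\lambda_1-P)(u,\varphi)_{L^2}+\big(S\|u_x\|_{L^2}^2-P\big)(u_x,\varphi_x)_{L^2}$ satisfies $\langle Ru,u\rangle\ge0$ (by \eqref{embedding}) and $\langle Ru,u_t\rangle=\Psi'$ with $\Psi\ge0$. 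Testing this relation with $v$, using $\langle Ru,v\rangle=\Psi'+\mu\langle Ru,u\rangle$ and $\delta-\mu\ge\mu$, one obtains for $\Phi:=\tfrac12\|v\|_{L^2}^2+\Psi$ a differential inequality of the type $\Phi'+2\mu\Phi+(\textrm{nonnegative})\le(g,v)_{L^2}$. One then concludes as in Step 2, but a finer bookkeeping is needed to reach the precise constant: one also controls $\limsup_{t\to\infty}\|u(t)\|_{L^2}$ from $u_t+\mu u=v$, and uses the identity $\mu(\delta-\mu)=\lambda_1-P$ to get $E_\mu(\infty)\le\tfrac{g_\infty^2}{2(\lambda_1-P)}$. (Consistently, in the borderline case $\delta^2=4(\lambda_1-P)$ one has $\mu=\delta/2$ and $\tfrac1{2(\lambda_1-P)}=\tfrac2{\delta^2}$, matching case (a).)

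\emph{Main difficulty.} Steps 1--2 are routine once Lemma \ref{justification} has been invoked. The real work is Step 3: the first-order reformulation in $v=u_t+\mu u$ neatly exhibits the two relaxation rates $\mu$ (for $u$) and $\delta-\mu$ (for $v$), whose product $\mu(\delta-\mu)=\lambda_1-P$ is exactly the constant appearing in the statement; but extracting the \emph{sharp} constant $\tfrac1{2(\lambda_1-P)}$, rather than a larger multiple of $\tfrac1{\mu^2}$, requires a careful choice of functional and of the Young-type splittings (equivalently, a genuinely two-dimensional quadratic-form argument coupling the $v$- and $u$-estimates), and one must keep all the estimates uniform as $\delta-2\mu\downarrow0$, i.e.\ as case (b) degenerates into case (a).
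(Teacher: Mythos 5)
Your Step 1 and Step 2 are correct: the energy identity is the same as the paper's, and your treatment of case (a) — the exact cancellation in $E_{\delta/2}'+\delta E_{\delta/2}$, the lower bound $E_{\delta/2}\ge\tfrac12\|u_t+\tfrac\delta2 u\|_{L^2}^2$ (valid precisely because $\delta^2\le4(\lambda_1-P)$), and the integration of $z'+\tfrac\delta2 z\le\tfrac1{\sqrt2}\|g\|_{L^2}$ for $z=\sqrt{E_{\delta/2}}$ — is a legitimate variant of the paper's argument that reaches the same constant $2g_\infty^2/\delta^2$ (modulo the routine care needed to differentiate $\sqrt{E}$ through its zeros).

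Case (b), however, is a genuine gap, and your own ``Main difficulty'' paragraph concedes it. As written, your Step 3 yields at best $\limsup_t\Phi(t)\le g_\infty^2/(2\mu^2)$ for $\Phi=\tfrac12\|u_t+\mu u\|_{L^2}^2+\Psi$; and since $\Phi=E_\mu-\tfrac12\big((\lambda_1-P)-\mu^2\big)\|u\|_{L^2}^2\le E_\mu$, you must then add back a $\|u\|_{L^2}^2$ term controlled only via $\limsup\|u\|_{L^2}\le\mu^{-1}\limsup\|v\|_{L^2}$, giving something like $E_\mu(\infty)\le\tfrac{g_\infty^2}{2\mu^2}+\tfrac{(\lambda_1-P)-\mu^2}{2}\tfrac{g_\infty^2}{\mu^4}$. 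Since $\mu\sim(\lambda_1-P)/\delta\to0$ as $\delta\to\infty$, this blows up like $\delta^4$, whereas the claimed bound $g_\infty^2/(2(\lambda_1-P))$ is independent of $\delta$; so the ``finer bookkeeping'' you defer is not a refinement but the entire content of part (b). The paper avoids the first-order reformulation altogether: it keeps the identity for $\dot E_\alpha+\alpha E_\alpha$, applies Young's inequality in the form $\int_\Omega g\,(u_t+\alpha u)\le\gamma\|u_t+\alpha u\|_{L^2}^2+\tfrac1{4\gamma}\|g\|_{L^2}^2$ with a free parameter $\gamma$, and imposes the three sign conditions $\tfrac32\alpha-\delta+\gamma\le0$, $2\alpha\gamma\le\lambda_1-P$, $\alpha-\delta+2\gamma=0$; in case (b) these are solved by $\alpha=\mu$, $\gamma=\tfrac{\delta-\mu}{2}$, and the constant drops out from $4\alpha\gamma=2\mu(\delta-\mu)=2(\lambda_1-P)$, i.e.\ exactly from the Vieta identity you observed but did not exploit inside a single quadratic-form estimate. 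You should either carry out this two-parameter choice explicitly or replace Step 3 entirely by it.
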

\begin{proof} Take any $\alpha\in(0,\frac23 \delta)$. From the definition of $E_\alpha$ and, by using Lemma \ref{justification} and \eqref{enlmg}, we infer that
$$\begin{array}{rl}
\dot E_\alpha(t)+ \alpha  E_\alpha(t) = &\!\!\!\!\! \displaystyle \left(\frac{3\alpha}2 -\delta\right)\|u_t(t)\|_{L^2}^2 -\frac\alpha{2}\|u(t)\|_{H^2_*}^2
+\frac{\alpha P}{2}\|u_x(t)\|_{L^2}^2-\frac{3S\alpha}{4}\|u_x(t)\|_{L^2}^4 \\
\ &\!\!\!\!\! \displaystyle+ \alpha(\alpha-\delta)\into u\xit u_t\xit \, d\xi + \into g\xit \big(u_t\xit +\alpha u\xit \big)\, d\xi\, .
\end{array}$$

Hence, by using \eqref{embedding} and the Young inequality, we obtain

\begin{empheq}{align}\label{dif-ineq-energy}
\begin{array}{rl}
\dot E_\alpha(t)+ \alpha  E_\alpha(t) \le & \displaystyle\left(\frac{3\alpha}2-\delta+\gamma\right)\|u_t(t)\|_{L^2}^2 -\frac\alpha{2\lambda_1}(\lambda_1-P-2\alpha\gamma)\|u(t)\|_{H^2_*}^2 \\
\ & \displaystyle + \alpha(\alpha-\delta+2\gamma)\into u\xit u_t\xit \, d\xi\, + \frac{1}{4\gamma}\|g(t)\|_{L^2}^2,
\end{array}
\end{empheq}
for every $\gamma>0$. To get a global estimate, we seek $\gamma>0$ such that
\begin{center}
(i) $\frac32\alpha-\delta+\gamma\le 0$,\qquad(ii) $\lambda_1-P\ge 2\alpha\gamma$,\qquad(iii) $\alpha-\delta+2\gamma = 0$.
\end{center}
These three conditions are satisfied if we choose
$$
\begin{array}{lll}
\displaystyle\alpha=\frac\delta2\, , \quad & \displaystyle \gamma=\frac\delta4\, , \qquad\qquad & \mbox{if }\delta^2\le4(\lambda_1-P)\\
\displaystyle\alpha=\mu\, , \quad & \displaystyle\gamma=\frac{\delta+\sqrt{\delta^2-4(\lambda_1-P)}}4\, ,\qquad\qquad & \mbox{if }\delta^2\ge4(\lambda_1-P)\, .
\end{array}
$$

Then, by using (i)-(ii)-(iii) we see that \eqref{dif-ineq-energy} entails
$$
\dot E_\alpha(t)+ \alpha  E_\alpha(t)\le \frac{1}{4\gamma}\|g(t)\|_{L^2}^2,
$$
and this implies, for all $t_0>0$, that
\begin{equation*}
E_\alpha(t)\le\,  {\rm e}^{-\alpha(t-t_0)} E_\alpha(t_0) +  \frac{\left(1-{\rm e}^{-\alpha(t-t_0)}\right)}{4\alpha\gamma} \sup_{t\ge t_0}\|g(t)\|_{L^2}^2.
\end{equation*}
By letting $t\to\infty$, we deduce that
\begin{equation}\label{timeestimateforEalpha-limsup}
E_\alpha(\infty):=\limsup_{t\to\infty}E_\alpha(t)\le\frac{g_{\infty}^2}{4\alpha\gamma}\, .
\end{equation}
The conclusions follow from \eqref{timeestimateforEalpha-limsup} and the respective choices of $\alpha$ and $\gamma$ according to the size of $\delta$. \end{proof}

Next we show that a bound on $E_\alpha(t)$ gives asymptotic bounds on all the norms of the solution. We start with $L^2$-bounds on $u$ and $u_x$ which are uniform in time. As it will become clear from the proofs, we can assume that $E_\alpha(\infty)\ge 0$.

\begin{lemma}[$L^2$-bound on $u$]\label{lem:L2bound}
Assume that $0\le P<\lambda_1$, that $\limsup_{t\to\infty}\|g(t)\|_{L^2}<\infty$ and that $u$ is a solution of \eqref{enlmg}.
Let $\alpha$ and $E_\alpha(\infty)$ be as in Lemma $\ref{energybound-PDE}$, then
\begin{equation}\label{eq:L2bound}
\limsup_{t\to \infty}\|u(t)\|^2_{L^2}\le\frac{4E_\alpha(\infty)}{\sqrt{(\lambda_1-P)^2+4SE_\alpha(\infty)}+(\lambda_1-P)}=:\Psi\, .
\end{equation}
\end{lemma}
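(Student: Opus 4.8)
The plan is to bound $E_\alpha(t)$ from below by a convex function of $\phi(t):=\|u(t)\|_{L^2}^2$ plus a pure time derivative, and then to close the argument by an ODE comparison. Since a weak solution satisfies $u\in C^1([0,\infty),L^2(\Omega))$, the map $\phi$ is of class $C^1$ with $\phi'(t)=2(u(t),u_t(t))_{L^2}$ --- this is just the product rule for $C^1$ curves in a Hilbert space, so here Lemma \ref{justification} is not needed. Using the embedding inequalities \eqref{embedding} together with $0\le P<\lambda_1$ one has $\|u\|_{H^2_*}^2\ge\lambda_1\|u_x\|_{L^2}^2=P\|u_x\|_{L^2}^2+(\lambda_1-P)\|u_x\|_{L^2}^2\ge P\|u_x\|_{L^2}^2+(\lambda_1-P)\|u\|_{L^2}^2$, and likewise $\|u_x\|_{L^2}^4\ge\|u\|_{L^2}^4$. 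Discarding the nonnegative term $\tfrac12\|u_t\|_{L^2}^2$ in \eqref{energyalpha} and writing $\alpha(u,u_t)_{L^2}=\tfrac\alpha2\phi'$, this yields
\begin{equation}\label{eq:L2-planineq}
\frac S4\,\phi(t)^2+\frac{\lambda_1-P}{2}\,\phi(t)+\frac\alpha2\,\phi'(t)\ \le\ E_\alpha(t)\qquad\text{for all }t\ge0 .
\end{equation}

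Next I would record two elementary facts. First, $\Psi$ is exactly the unique nonnegative root $r$ of $G(r):=\tfrac S4r^2+\tfrac{\lambda_1-P}{2}r=E_\alpha(\infty)$: solving this quadratic and rationalizing gives $r=\big(-(\lambda_1-P)+\sqrt{(\lambda_1-P)^2+4SE_\alpha(\infty)}\big)/S=\Psi$; moreover $G$ is strictly increasing on $[0,\infty)$, so \eqref{eq:L2bound} is equivalent to $\limsup_{t\to\infty}\phi(t)\le\Psi$. Second, $\phi$ is bounded: estimating instead $\tfrac12\|u_t\|_{L^2}^2+\alpha(u,u_t)_{L^2}\ge-\tfrac{\alpha^2}{2}\|u\|_{L^2}^2$ and dropping the nonnegative bending terms gives $E_\alpha(t)\ge\tfrac S4\phi(t)^2-\tfrac{\alpha^2}{2}\phi(t)$, and since $\limsup_{t\to\infty}E_\alpha(t)=E_\alpha(\infty)<\infty$ by Lemma \ref{energybound-PDE}, the quadratic growth in $\phi$ forces $\phi$ to be bounded. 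Set $L:=\limsup_{t\to\infty}\phi(t)\in[0,\infty)$.

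The heart of the argument is a barrier estimate. Assume, for contradiction, that $L>\Psi$, hence $G(L)>E_\alpha(\infty)$. Pick $\varepsilon\in(0,L-\Psi)$ small enough that $G(L-\varepsilon)>E_\alpha(\infty)+\varepsilon$, and then $T>0$ with $E_\alpha(t)<E_\alpha(\infty)+\varepsilon$ for all $t\ge T$; set $c:=\tfrac2\alpha\big(G(L-\varepsilon)-E_\alpha(\infty)-\varepsilon\big)>0$. Then \eqref{eq:L2-planineq} gives, whenever $t\ge T$ and $\phi(t)\ge L-\varepsilon$, that $\tfrac\alpha2\phi'(t)\le E_\alpha(t)-G(\phi(t))\le(E_\alpha(\infty)+\varepsilon)-G(L-\varepsilon)$, i.e. $\phi'(t)\le-c<0$. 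Consequently $\phi$ cannot cross the level $L-\varepsilon$ from below after time $T$ (at such a crossing one would have $\phi=L-\varepsilon$ and $\phi'\ge0$, contradicting $\phi'\le-c$); and if $\phi(t_0)\ge L-\varepsilon$ for some $t_0\ge T$, then as long as $\phi\ge L-\varepsilon$ it decreases at rate at least $c$, so --- being bounded --- it reaches a value $<L-\varepsilon$ at some finite $t_1>t_0$ and then stays $<L-\varepsilon$ forever. In either case $\phi(t)<L-\varepsilon$ for all large $t$, so $L\le L-\varepsilon<L$, a contradiction. Hence $L\le\Psi$, which is \eqref{eq:L2bound}.

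The hard part will be the barrier argument in the last paragraph --- in particular the claim that $\phi$, once it has dropped below the level $L-\varepsilon$, can never return above it --- whereas the lower bound \eqref{eq:L2-planineq} and the identification of $\Psi$ as a root of $G$ are routine. I would stress one design choice: the cross term $\alpha(u,u_t)_{L^2}$ in $E_\alpha$ is kept as a total derivative instead of being absorbed by Young's inequality; absorbing it would only produce the weaker constant $\lambda_1-P-\alpha^2$ in place of $\lambda_1-P$, so it is precisely the ODE comparison that yields the sharp constant in $\Psi$.
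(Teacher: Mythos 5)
Your proof is correct and follows essentially the same route as the paper: both drop the kinetic term $\tfrac12\|u_t\|_{L^2}^2$, use \eqref{embedding} to arrive at the Riccati-type differential inequality $\alpha\dot\Upsilon+(\lambda_1-P)\Upsilon+S\Upsilon^2\le E_\alpha(t)$ for $\Upsilon=\tfrac12\|u\|_{L^2}^2$, and conclude by an ODE comparison in which $\Psi$ is the root of the associated quadratic. Your barrier/contradiction argument is just a more detailed write-up of the paper's two-case analysis (fix $C\ge E_\alpha(t)$ for $t\ge t_0$, show $\Upsilon$ eventually stays below the corresponding root, then let $t_0\to\infty$ so that $C\to E_\alpha(\infty)$).
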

\begin{proof} Let $\alpha$ be as in Lemma $\ref{energybound-PDE}$ and observe that
$$E_\alpha(t)=\frac\alpha2 \frac d{dt}\|u(t)\|^2_{L^2}+ \frac12 \|u_t(t)\|^2_{L^2}+\frac12 \|u(t)\|_{H^2_*}^2-\frac{P}{2}\|u_x(t)\|_{L^2}^2
+\frac{S}{4}\|u_x(t)\|_{L^2}^4\, .$$
From Lemma \ref{energybound-PDE} we know that there exist $C,t_0>0$ such that $E_\alpha(t)\le C$ for all $t\ge t_0$. Then, setting $\Upsilon(t):=\frac12\|u(t)\|^2_{L^2}$, the previous inequality and \eqref{embedding} imply that
\begin{equation}\label{riccati}
\alpha\dot\Upsilon (t)+ (\lambda_1-P) \Upsilon(t) + S \Upsilon(t)^2\le C\qquad\forall t\ge t_0\, .
\end{equation}

Two cases may occur. If there exists $\bar t\ge t_0$ such that
\begin{equation}\label{bart}
\Upsilon (\bar t) \le \frac{\sqrt{(\lambda_1-P)^2+4SC}-(\lambda_1-P)}{2S}=:\overline\Upsilon\, ,
\end{equation}
then from \eqref{riccati} we see that, necessarily, $\Upsilon (t)\le\overline\Upsilon$ for all $t\ge \bar t$ since $\dot\Upsilon (t)<0$ whenever
$\Upsilon(t)>\overline\Upsilon$. If there exist no $\bar t\ge t_0$ such that \eqref{bart} holds, then $\dot\Upsilon(t)<0$ for all $t\ge t_0$ and
$\Upsilon (t)$ has a limit at infinity, necessarily $\overline\Upsilon$. Therefore, in any case we have that
$$\limsup_{t\to\infty}\Upsilon(t)\le\overline\Upsilon\, .$$

By applying this argument for all $t_0$ (so that the bound $C$ approaches $E_\alpha(\infty)$ when $t_0\to\infty$) and by recalling the definition
of $\Upsilon(t)$, we obtain \eqref{eq:L2bound}.\end{proof}

\begin{lemma}[$L^2$-bound on $u_x$]\label{lem:L2bound-u_x}
Assume that $0\le P<\lambda_1$, that $\limsup_{t\to\infty}\|g(t)\|_{L^2}<\infty$ and that $u$ is a solution of \eqref{enlmg}.
Let $\alpha$ and $E_\alpha(\infty)$ be as in Lemma $\ref{energybound-PDE}$ and $\Psi$ be as in \eqref{eq:L2bound}. Then
\begin{equation}\label{uxbound}
\limsup_{t\to\infty}\|u_x(t)\|_{L^2}^2\le \frac{4E_\alpha(\infty)+2\alpha^2\Psi}{\sqrt{(\lambda_1-P)^2+2S(2E_\alpha(\infty)+\alpha^2\Psi)}+(\lambda_1-P)}\, .
\end{equation}
\end{lemma}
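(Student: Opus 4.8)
The plan is to follow closely the proof of Lemma~\ref{lem:L2bound}, replacing $\|u(t)\|_{L^2}^2$ by $\|u_x(t)\|_{L^2}^2$ and feeding in the $L^2$-bound on $u$ just obtained. First I would bound $E_\alpha(t)$ from below so that only $\|u_x(t)\|_{L^2}^2$ and $\|u(t)\|_{L^2}^2$ appear. By the embedding $\lambda_1\|v_x\|_{L^2}^2\le\|v\|_{H^2_*}^2$ from \eqref{embedding} one has $\tfrac12\|u\|_{H^2_*}^2-\tfrac{P}{2}\|u_x\|_{L^2}^2\ge\tfrac{\lambda_1-P}{2}\|u_x\|_{L^2}^2$, while Young's inequality gives $\alpha\into u\xit u_t\xit\,d\xi\ge-\tfrac12\|u_t\|_{L^2}^2-\tfrac{\alpha^2}{2}\|u\|_{L^2}^2$. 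Inserting both into the definition of $E_\alpha$, the $\|u_t\|_{L^2}^2$ contributions cancel and one is left with
\[
\frac{\lambda_1-P}{2}\|u_x(t)\|_{L^2}^2+\frac{S}{4}\|u_x(t)\|_{L^2}^4\le E_\alpha(t)+\frac{\alpha^2}{2}\|u(t)\|_{L^2}^2\,.
\]

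Next I would take $\limsup$ as $t\to\infty$. The left-hand side is a continuous increasing function of $\|u_x(t)\|_{L^2}^2\ge0$, so if $Y:=\limsup_{t\to\infty}\|u_x(t)\|_{L^2}^2$ then its $\limsup$ equals $\tfrac{\lambda_1-P}{2}Y+\tfrac{S}{4}Y^2$; on the right-hand side, Lemma~\ref{energybound-PDE} and Lemma~\ref{lem:L2bound} give $\limsup_{t\to\infty}\big(E_\alpha(t)+\tfrac{\alpha^2}{2}\|u(t)\|_{L^2}^2\big)\le E_\alpha(\infty)+\tfrac{\alpha^2}{2}\Psi$. Hence
\[
\frac{S}{4}\,Y^2+\frac{\lambda_1-P}{2}\,Y-\Big(E_\alpha(\infty)+\frac{\alpha^2}{2}\Psi\Big)\le0\,.
\]
As noted before Lemma~\ref{lem:L2bound}, we may assume $E_\alpha(\infty)\ge0$, so the quantity $E_\alpha(\infty)+\tfrac{\alpha^2}{2}\Psi$ is nonnegative and the constraint is $Y\le Y_+$ with $Y_+$ the positive root; solving and then rationalizing the numerator (multiplying by the conjugate $(\lambda_1-P)+\sqrt{(\lambda_1-P)^2+2S(2E_\alpha(\infty)+\alpha^2\Psi)}$) produces exactly the bound \eqref{uxbound}.

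The argument is essentially algebraic, so I do not expect a genuine obstacle. The only points deserving care are the elementary facts that $\limsup$ commutes with a continuous increasing function and that the $\limsup$ of a sum does not exceed the sum of the $\limsup$'s, which is all that is needed for an upper bound. If one prefers the more quantitative route of Lemma~\ref{lem:L2bound}, one can instead fix $t_0$, use $E_\alpha(t)\le C$ for $t\ge t_0$ together with the eventual bound $\|u(t)\|_{L^2}^2\le\Psi+\varepsilon$, derive the quadratic inequality with $C$ and $\Psi+\varepsilon$ in place of $E_\alpha(\infty)$ and $\Psi$, and then let $t_0\to\infty$ and $\varepsilon\to0$; the conclusion is unchanged.
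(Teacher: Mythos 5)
Your argument is correct and is essentially the paper's own proof: your Young-inequality estimate $\tfrac12\|u_t\|_{L^2}^2+\alpha\int_\Omega uu_t\ge-\tfrac{\alpha^2}{2}\|u\|_{L^2}^2$ is the same as the paper's completion of the square $\tfrac12\int_\Omega(\alpha u+u_t)^2\ge0$, leading to the identical biquadratic inequality in $\|u_x(t)\|_{L^2}^2$, which you then solve and rationalize exactly as the paper does.
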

\begin{proof}
Let us rewrite $E_\alpha$ as
$$E_\alpha(t)=\frac12 \int_\Omega \left(\alpha u\xit +u_t\xit\right)^2d\xi-\frac{\alpha^2}{2}\|u(t)\|^2_{L^2}+\frac12 \|u(t)\|_{H^2_*}^2
-\frac{P}{2}\|u_x(t)\|_{L^2}^2+\frac{S}{4}\|u_x(t)\|_{L^2}^4\, .$$
Therefore, by dropping the squared integral, we obtain
\begin{equation}\label{rudeH2}
\frac12 \|u(t)\|_{H^2_*}^2-\frac{P}{2}\|u_x(t)\|_{L^2}^2+\frac{S}{4}\|u_x(t)\|_{L^2}^4\le E_\alpha(t)+\frac{\alpha^2}{2}\|u(t)\|^2_{L^2}\, .
\end{equation}
Using \eqref{embedding} into \eqref{rudeH2}, we obtain
$$
\frac{S}{4}\|u_x(t)\|_{L^2}^4+\frac{\lambda_1-P}{2}\|u_x(t)\|_{L^2}^2\le E_\alpha(t)+\frac{\alpha^2}{2}\|u(t)\|^2_{L^2}\, .
$$
By solving this biquadratic inequality and by taking the limsup, we obtain \eqref{uxbound}.
\end{proof}

\begin{lemma}[$L^2$-bound on $u_t$]\label{lem:L2bound-u_t}
Assume that $0\le P<\lambda_1$, that $\limsup_{t\to\infty}\|g(t)\|_{L^2}<\infty$ and that $u$ is a solution of \eqref{enlmg}.
Let $\alpha$ and $E_\alpha(\infty)$ be as in Lemma $\ref{energybound-PDE}$ and $\Psi$ be as in \eqref{eq:L2bound}. Then, for every $\lambda>0$,
\begin{equation}\label{utbound}
\limsup_{t\to\infty}\|u_t(t)\|_{L^2}^2\le
\frac{1+\lambda}{\lambda}\left(2E_\alpha(\infty)+\max_{s\in [0,\Psi]}\left(((\lambda+1)\alpha^2-(\lambda_1-P))s-\frac{S}{2}s^2\right)\right).
\end{equation}
\end{lemma}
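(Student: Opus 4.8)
The plan is to follow the same two-step pattern as in Lemmas \ref{lem:L2bound} and \ref{lem:L2bound-u_x}: first produce a pointwise inequality bounding $\|u_t(t)\|_{L^2}^2$ by $E_\alpha(t)$ plus lower order quantities, and then pass to the $\limsup$ using Lemma \ref{energybound-PDE} and Lemma \ref{lem:L2bound}.

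For the pointwise step, I would rewrite $E_\alpha(t)$ keeping the mixed term intact,
\[
E_\alpha(t)=\tfrac12\|u_t(t)\|_{L^2}^2+\alpha\into u\,u_t\,d\xi+\tfrac12\|u(t)\|_{H^2_*}^2-\tfrac P2\|u_x(t)\|_{L^2}^2+\tfrac S4\|u_x(t)\|_{L^2}^4\,,
\]
and estimate the mixed term by Cauchy--Schwarz and Young's inequality with a weight adjusted to the free parameter $\lambda>0$, namely $\alpha\,\big|\into u\,u_t\,d\xi\big|\le\frac{1}{2(1+\lambda)}\|u_t(t)\|_{L^2}^2+\frac{(1+\lambda)\alpha^2}{2}\|u(t)\|_{L^2}^2$. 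Subtracting, the coefficient in front of $\|u_t(t)\|_{L^2}^2$ becomes exactly $\frac{\lambda}{2(1+\lambda)}$ --- this is what produces the prefactor $\frac{1+\lambda}{\lambda}$ in \eqref{utbound} --- and a rearrangement gives
\[
\|u_t(t)\|_{L^2}^2\le\frac{1+\lambda}{\lambda}\Big(2E_\alpha(t)-\|u(t)\|_{H^2_*}^2+P\|u_x(t)\|_{L^2}^2-\tfrac S2\|u_x(t)\|_{L^2}^4+(1+\lambda)\alpha^2\|u(t)\|_{L^2}^2\Big).
\]

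To collapse the right-hand side to a function of $\|u(t)\|_{L^2}^2$ alone, I would invoke the three embedding inequalities \eqref{embedding}: from $\lambda_1\|u_x\|_{L^2}^2\le\|u\|_{H^2_*}^2$ one gets $-\|u\|_{H^2_*}^2+P\|u_x\|_{L^2}^2\le-(\lambda_1-P)\|u_x\|_{L^2}^2$, and since $\lambda_1-P>0$ and $\|u\|_{L^2}^2\le\|u_x\|_{L^2}^2$ this is $\le-(\lambda_1-P)\|u\|_{L^2}^2$; likewise $-\tfrac S2\|u_x\|_{L^2}^4\le-\tfrac S2\|u\|_{L^2}^4$. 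This yields the pointwise bound
\[
\|u_t(t)\|_{L^2}^2\le\frac{1+\lambda}{\lambda}\Big(2E_\alpha(t)+\big((1+\lambda)\alpha^2-(\lambda_1-P)\big)\|u(t)\|_{L^2}^2-\tfrac S2\|u(t)\|_{L^2}^4\Big).
\]

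Finally, for the asymptotic step, fix $\varepsilon>0$; by Lemma \ref{energybound-PDE} and Lemma \ref{lem:L2bound} there is $t_\varepsilon$ such that $E_\alpha(t)\le E_\alpha(\infty)+\varepsilon$ and $\|u(t)\|_{L^2}^2\le\Psi+\varepsilon$ for $t\ge t_\varepsilon$, so the bracket is at most $2(E_\alpha(\infty)+\varepsilon)+\max_{s\in[0,\Psi+\varepsilon]}\big(((1+\lambda)\alpha^2-(\lambda_1-P))s-\tfrac S2 s^2\big)$. Taking $\limsup$ and then letting $\varepsilon\to0$, using continuity of $s\mapsto((1+\lambda)\alpha^2-(\lambda_1-P))s-\tfrac S2 s^2$ and the fact that the maximum over $[0,\Psi+\varepsilon]$ tends to the maximum over $[0,\Psi]$, gives \eqref{utbound}. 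The only slightly delicate point is exactly this last passage: because the maximized parabola may take its maximum at an interior point of $[0,\Psi]$ rather than at the endpoint, one cannot simply plug in $s=\|u(t)\|_{L^2}^2$ and send $t\to\infty$; enlarging the interval by $\varepsilon$ and using uniform continuity of the parabola near $s=\Psi$ takes care of it. All remaining computations are routine.
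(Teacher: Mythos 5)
Your proof is correct and follows essentially the same route as the paper: it arrives at the identical pointwise inequality $\|u_t(t)\|_{L^2}^2\le\frac{1+\lambda}{\lambda}\big(2E_\alpha(t)+((\lambda+1)\alpha^2-(\lambda_1-P))\|u(t)\|_{L^2}^2-\frac{S}{2}\|u(t)\|_{L^2}^4\big)$, the only cosmetic difference being that the paper absorbs the mixed term via Minkowski plus Young on $\|\alpha u+u_t\|_{L^2}+\alpha\|u\|_{L^2}$, whereas you apply the weighted Young inequality directly to $\alpha\into u\,u_t$. Your final $\varepsilon$-argument for passing to the $\limsup$ through the maximum over $[0,\Psi]$ is a point the paper leaves implicit, and you handle it correctly.
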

\begin{proof} The Minkowski inequality yields
$$\left(\int_\Omega u_t^2\dxit\right)^{1/2}\le\left(\int_\Omega \left(\alpha u\xit +u_t\xit\right)^2d\xi\right)^{1/2}
+\alpha \left(\int_\Omega u^2\dxit\right)^{1/2}\, .$$
Moreover, by using the expression of the energy and \eqref{embedding}, we see that
$$
\|u_t(t)\|_{L^2}\le\left(2E_\alpha(t)-(\lambda_1-P-{\alpha^2})\|u(t)\|^2_{L^2}-\frac{S}{2}\|u(t)\|_{L^2}^4\right)^{1/2}+\alpha \|u(t)\|_{L^2}
$$
for all $t\ge t_0$. Applying Young's inequality, this yields for every $\lambda>0$
$$\|u_t(t)\|_{L^2}^2\le
\frac{1+\lambda}{\lambda}\left(2E_\alpha(t)+(\lambda\alpha^2-(\lambda_1-P-{\alpha^2}))\|u(t)\|^2_{L^2}-\frac{S}{2}\|u(t)\|_{L^2}^4\right).
$$
\end{proof}

\begin{lemma}[$H^2$-bound on $u$]\label{lem:H2bound}
Assume that $0\le P<\lambda_1$, that $\limsup_{t\to\infty}\|g(t)\|_{L^2}<\infty$ and that $u$ is a solution of \eqref{enlmg}.
Let $\alpha$ and $E_\alpha(\infty)$ be as in Lemma $\ref{energybound-PDE}$, let $\Psi$ be as in \eqref{eq:L2bound}. Then we have
\begin{equation}\label{uH2bound}
\limsup_{t\to\infty}\|u(t)\|_{H^2_*}^2\le \frac{2\lambda_1}{\lambda_1-P}\left(E_\alpha(\infty)+ \frac{\alpha^2\, \Psi}{2}\right)\, .
\end{equation}
\end{lemma}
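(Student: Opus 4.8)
The plan is to reuse the pointwise inequality \eqref{rudeH2} already obtained in the proof of Lemma~\ref{lem:L2bound-u_x} and to combine it with the embedding inequalities \eqref{embedding} and the $L^2$-bound of Lemma~\ref{lem:L2bound}. Recall that, after rewriting $E_\alpha$ by completing the square in $\alpha u+u_t$ and discarding the nonnegative squared integral, one has
\begin{equation*}
\tfrac12 \|u(t)\|_{H^2_*}^2-\tfrac{P}{2}\|u_x(t)\|_{L^2}^2+\tfrac{S}{4}\|u_x(t)\|_{L^2}^4\le E_\alpha(t)+\tfrac{\alpha^2}{2}\|u(t)\|^2_{L^2}\, .
\end{equation*}

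First I would drop the quartic stretching term $\tfrac{S}{4}\|u_x(t)\|_{L^2}^4\ge0$ on the left-hand side, and then invoke the third inequality in \eqref{embedding}, namely $\lambda_1\|u_x(t)\|_{L^2}^2\le\|u(t)\|_{H^2_*}^2$, to bound $-\tfrac{P}{2}\|u_x(t)\|_{L^2}^2\ge-\tfrac{P}{2\lambda_1}\|u(t)\|_{H^2_*}^2$. This yields
\begin{equation*}
\frac{\lambda_1-P}{2\lambda_1}\,\|u(t)\|_{H^2_*}^2\le E_\alpha(t)+\frac{\alpha^2}{2}\|u(t)\|^2_{L^2}\, ,
\end{equation*}
and the coefficient on the left is strictly positive because $P<\lambda_1$.

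Finally I would take the $\limsup$ as $t\to\infty$ of both sides. By Lemma~\ref{energybound-PDE} the right-most term involving the energy satisfies $\limsup_{t\to\infty}E_\alpha(t)=E_\alpha(\infty)$, and by Lemma~\ref{lem:L2bound} we have $\limsup_{t\to\infty}\|u(t)\|_{L^2}^2\le\Psi$; using the subadditivity of $\limsup$, we conclude
\begin{equation*}
\frac{\lambda_1-P}{2\lambda_1}\limsup_{t\to\infty}\|u(t)\|_{H^2_*}^2\le E_\alpha(\infty)+\frac{\alpha^2}{2}\,\Psi\, ,
\end{equation*}
which is exactly \eqref{uH2bound} after multiplying by $\tfrac{2\lambda_1}{\lambda_1-P}$. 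There is no real obstacle here: the only points requiring a bit of care are to drop the stretching term in the favourable direction and to use the sharp embedding constant $\lambda_1$ rather than the weaker $\gamma$ of \eqref{embedding-H^2_*-u_xx}; as noted just before the statement, one may also assume $E_\alpha(\infty)\ge0$ so that the bound is meaningful.
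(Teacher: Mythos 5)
Your proof is correct and follows essentially the same route as the paper: both start from the inequality \eqref{rudeH2}, drop the nonnegative stretching term, absorb $-\tfrac{P}{2}\|u_x\|_{L^2}^2$ via the embedding $\lambda_1\|u_x\|_{L^2}^2\le\|u\|_{H^2_*}^2$, and then take the $\limsup$ using Lemma \ref{lem:L2bound}. No gaps.
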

\begin{proof} By using \eqref{embedding}, we see that \eqref{rudeH2} yields
$$
\frac{\lambda_1-P}{2\lambda_1}\|u(t)\|_{H^2_*}^2 \le E_\alpha(t) + \frac{\alpha^2}{2}\|u(t)\|^2_{L^2}
-\frac{S}{4}\|u(t)\|_{L^2}^4\le E_\alpha(t) + \frac{\alpha^2}{2}\|u(t)\|^2_{L^2}
$$
so that, by taking the limsup and using Lemma \ref{lem:L2bound}, we obtain \eqref{uH2bound}.\end{proof}

We conclude this section by noticing that all the bounds obtained so far can be used for the weak solutions of the {\em linear problem}
\begin{empheq}{align}
\label{perturb-eq}
\left\{
\begin{array}{rl}
w_{tt}+\delta w_t + \Delta^2 w + Pw_{xx} - b w = h(\xi,t)  &\textrm{in }\Omega\times(0,T)\\
w = w_{xx}= 0 &\textrm{on }\{0,\pi\}\times[-\ell,\ell]\\
w_{yy}+\sigma w_{xx} = w_{yyy}+(2-\sigma)w_{xxy}= 0 &\textrm{on }[0,\pi]\times\{-\ell,\ell\},
\end{array}
\right.
\end{empheq}
obtained by taking $S=0$ in \eqref{enlmg} and inserting the additional zero order term $b w$ (that will appear naturally while deriving
the exponential decay of the solutions of the nonlinear equation). More precisely, we have

\begin{lemma}\label{boundsforlinear}
Let $h\in C^0(\R_+,L^2(\Omega))$. For any weak solution $w$ of \eqref{perturb-eq}, we have the estimates
\begin{itemize}
\item ($L^2$ bound on $w_t$)
\begin{equation}\label{utbound-lin}
\limsup_{t\to\infty}\|w_t(t)\|_{L^2}^2\le \frac{2}{\lambda_1-P-b}\limsup_{t\to\infty}\|h(t)\|_{L^2}^2,
\end{equation}
\item ($H^2$ bound on $w$)
\begin{equation}\label{uH2bound-lin}
\limsup_{t\to\infty}\|w(t)\|_{H^2_*}^2\le
\frac{\lambda_1}{\lambda_1-P-b}\left(\max\Big(\frac4{\delta^2}, \frac{1}{\lambda_1-P-b}\Big)+\frac{1}{\lambda_1-P-b}\right)\limsup_{t\to\infty}\|h(t)\|_{L^2}^2.\\
\end{equation}
\end{itemize}
\end{lemma}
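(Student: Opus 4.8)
The plan is to read \eqref{perturb-eq} as the degenerate case $S=0$ of \eqref{enlmg} in which the elliptic operator $\Delta^2+P\partial_{xx}$ is replaced by $\Delta^2+P\partial_{xx}-b\,\mathrm{Id}$. Its bilinear form $a_b(w,v):=(w,v)_{H^2_*}-P(w_x,v_x)_{L^2}-b(w,v)_{L^2}$ is coercive on $H^2_*(\Omega)$: indeed, by \eqref{embedding}, $\|w\|_{H^2_*}^2-P\|w_x\|_{L^2}^2-b\|w\|_{L^2}^2\ge\tfrac{\lambda_1-P-b}{\lambda_1}\|w\|_{H^2_*}^2$, which is positive since necessarily $b<\lambda_1-P$ (this is what makes $\lambda_1-P-b$ appear in the statement). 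Hence Lemma \ref{justification} applies to $w$ exactly as it did to $u$, and every estimate of Section \ref{enest} transfers to \eqref{perturb-eq} once, in its proof, one sets $S=0$ (so the quartic term $\tfrac S4\|u_x\|_{L^2}^4$ and all terms it generates vanish), replaces $g$ by $h$ and $E_\alpha$ by $\widetilde E_\alpha(t):=\tfrac12\|w_t\|_{L^2}^2+\tfrac12\|w\|_{H^2_*}^2-\tfrac P2\|w_x\|_{L^2}^2-\tfrac b2\|w\|_{L^2}^2+\alpha(w,w_t)_{L^2}$, and replaces $\lambda_1-P$ by $\lambda_1-P-b$ throughout.

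Running the proof of Lemma \ref{energybound-PDE} this way, the inequality \eqref{dif-ineq-energy} loses its quartic term and the same bookkeeping (now with $\lambda_1-P-b$) gives the unified bound $\widetilde E_\alpha(\infty)\le\tfrac12\max\!\big(\tfrac4{\delta^2},\tfrac1{\lambda_1-P-b}\big)\limsup_{t\to\infty}\|h(t)\|_{L^2}^2$, with $\alpha=\delta/2$ when $\delta^2\le4(\lambda_1-P-b)$ and $\alpha=\tfrac\delta2-\tfrac12\sqrt{\delta^2-4(\lambda_1-P-b)}$ otherwise; in both regimes $\alpha^2\le\lambda_1-P-b$. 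Likewise the $S=0$ form of \eqref{riccati} is linear, $\alpha\dot\Upsilon+(\lambda_1-P-b)\Upsilon\le C$, so the argument of Lemma \ref{lem:L2bound} yields $\limsup_{t\to\infty}\|w(t)\|_{L^2}^2\le\widetilde\Psi:=\tfrac{2\widetilde E_\alpha(\infty)}{\lambda_1-P-b}$. These two facts feed the proof of Lemma \ref{lem:H2bound}: dropping the square in $\widetilde E_\alpha$ and using coercivity gives $\tfrac{\lambda_1-P-b}{2\lambda_1}\|w\|_{H^2_*}^2\le\widetilde E_\alpha(t)+\tfrac{\alpha^2}2\|w\|_{L^2}^2$, hence $\limsup\|w\|_{H^2_*}^2\le\tfrac{2\lambda_1}{\lambda_1-P-b}\,\widetilde E_\alpha(\infty)\big(1+\tfrac{\alpha^2}{\lambda_1-P-b}\big)$; substituting the two admissible pairs $(\alpha,\widetilde E_\alpha(\infty))$ — in the first regime $\alpha^2=\delta^2/4$ and $\widetilde E_\alpha(\infty)\le\tfrac2{\delta^2}\limsup\|h\|_{L^2}^2$, in the second $1+\tfrac{\alpha^2}{\lambda_1-P-b}\le2$ and $\widetilde E_\alpha(\infty)\le\tfrac1{2(\lambda_1-P-b)}\limsup\|h\|_{L^2}^2$ — collapses the right-hand side to exactly \eqref{uH2bound-lin}. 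For \eqref{utbound-lin} I would use the $S=0$ form of Lemma \ref{lem:L2bound-u_t}: its Minkowski/Young argument gives, for every $\lambda>0$, $\limsup\|w_t\|_{L^2}^2\le\tfrac{1+\lambda}\lambda\big(2\widetilde E_\alpha(\infty)+\max_{s\in[0,\widetilde\Psi]}((\lambda+1)\alpha^2-(\lambda_1-P-b))s\big)$; choosing $\lambda$ so that $(\lambda+1)\alpha^2\le\lambda_1-P-b$ annihilates the maximum, and optimizing $\lambda$ against the energy bound of the first step reduces this to $\tfrac2{\lambda_1-P-b}\limsup\|h\|_{L^2}^2$.

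No new PDE content is involved; the whole proof is inherited from Section \ref{enest} with $S=0$. The one thing requiring care is the constant bookkeeping: one must carry the two regimes $\delta^2\lessgtr4(\lambda_1-P-b)$ in parallel all the way, and check in each case that the admissible $\alpha$ (and, for \eqref{utbound-lin}, the free parameter $\lambda$) can be chosen so that the resulting expressions simplify to the precise forms stated rather than to merely comparable quantities. That last simplification, especially for \eqref{utbound-lin}, is the step I expect to be the main obstacle.
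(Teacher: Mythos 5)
Your strategy is exactly the one the paper intends: Lemma \ref{boundsforlinear} is stated without proof, the preceding sentence making clear that one is to rerun the estimates of Section \ref{enest} with $S=0$, $g$ replaced by $h$, and the zero-order term $-bw$ absorbed by replacing $\lambda_1-P$ with $K:=\lambda_1-P-b$ throughout. Your coercivity remark, the modified energy $\widetilde E_\alpha$, the two regimes $\delta^2\le 4K$ and $\delta^2\ge 4K$ with $\alpha^2\le K$ in both, and the bookkeeping for \eqref{uH2bound-lin} are all correct; I checked that your two substitutions collapse exactly to the stated $H^2$ constant in each regime.

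The gap is the one you yourself flagged, and it is not closable as stated. Write $h_\infty:=\limsup_{t\to\infty}\|h(t)\|_{L^2}$. In the regime $\delta^2\le 4K$ you have $\alpha^2=\delta^2/4$ and only $\widetilde E_\alpha(\infty)\le \tfrac{2}{\delta^2}h_\infty^2$ at your disposal; annihilating the maximum forces $\tfrac{1+\lambda}{\lambda}\ge\tfrac{K}{K-\alpha^2}\ge 1$, so the best output of your scheme is $\tfrac{K}{K-\alpha^2}\cdot\tfrac{4}{\delta^2}h_\infty^2\ge\tfrac{4}{\delta^2}h_\infty^2$, which exceeds $\tfrac{2}{K}h_\infty^2$ as soon as $\delta^2<2K$, and keeping the maximum and optimizing in $\lambda$ does no better. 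This is not a defect of your method: \eqref{utbound-lin} as printed is actually false for small damping. Taking $w(\xi,t)=z(t)w_1(\xi)$ and $h(\xi,t)=h_0\sin(\omega t)\,w_1(\xi)$ with $w_1$ the first ($L^2$-normalized) eigenfunction and $\omega^2=K$, the steady state of $\ddot z+\delta\dot z+Kz=h_0\sin(\omega t)$ gives $\limsup_{t\to\infty}|\dot z(t)|^2=h_0^2/\delta^2$, which violates $\tfrac{2}{K}h_0^2$ whenever $\delta^2<K/2$. You should therefore prove \eqref{utbound-lin} with the $\delta$-dependent constant your argument actually yields, e.g. a multiple of $\max\big(\tfrac{4}{\delta^2},\tfrac1K\big)$ as in \eqref{uH2bound-lin}. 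This is harmless for the rest of the paper: both invocations of \eqref{utbound-lin} (in Lemma \ref{lemme-naze} and in the proof of Theorem \ref{smalltorsion}) are applied to a right-hand side with $\limsup_{t\to\infty}\|h(t)\|_{L^2}=0$, where only finiteness of the constant matters.
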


\section{Proof of Theorem \ref{exuniq}}\label{proof1}

Local and global existence of weak solution of \eqref{enlmg}-\eqref{initialc} are proved in \cite[Theorem 3]{FerGazMor}. Here, inspired by the work of
Ball \cite[Theorem 4]{ball}, we prove that this solution is a strong solution in case the initial data and the forcing term are slightly more regular, that is, the second part of Theorem \ref{exuniq}.

\begin{proposition}\label{regularity1-ball}
Let $u_0 \in H^4\cap H^{2}_*(\Omega)$, $v_0 \in H^2_*(\Omega)$, $T>0$ and $g \in C^1([0, T], L^2(\Omega))$. Then the unique weak solution $u$ of \eqref{enlmg}-\eqref{initialc} satisfies
\[
u \in C([0,T], H^4\cap H^{2}_*(\Omega))\cap C^1([0,T], H^2_*(\Omega))\cap C^2([0,T], L^2(\Omega)).
\]
\end{proposition}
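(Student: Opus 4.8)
The plan is to upgrade the regularity of the weak solution $u$ of \eqref{enlmg}-\eqref{initialc}, whose existence and uniqueness are already granted by the first part of Theorem~\ref{exuniq}, following the strategy of \cite[Theorem~4]{ball}: differentiate the equation in time, read off a \emph{linear} evolution problem for $u_t$, and then convert the temporal regularity so obtained into spatial regularity by elliptic estimates. To this end, set $c(t):=P-S\|u_x(t)\|_{L^2}^2$, which is a bounded continuous function of $t$ since $u\in C^0([0,T],\hs)$. Differentiating \eqref{enlmg} formally in $t$, the function $v:=u_t$ should solve
\[
v_{tt}+\delta v_t+\Delta^2 v+c(t)\,v_{xx}-2S\,(u_x(t),v_x)_{L^2}\,u_{xx}(t)=g_t\qquad\text{in }\Omega\times(0,T),
\]
with the boundary conditions of \eqref{enlmg}. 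For each $t$ the map $v\mapsto c(t)v_{xx}-2S(u_x(t),v_x)_{L^2}u_{xx}(t)$ is a bounded linear operator from $\hs$ into $L^2(\Omega)$ whose norm is bounded uniformly on $[0,T]$ (here one uses $\sup_{[0,T]}\|u(t)\|_{H^2_*}<\infty$), so this is a non-autonomous \emph{linear} plate equation with continuous-in-$t$ coefficients and right-hand side $g_t\in C^0([0,T],L^2(\Omega))$. Its initial data are $v(0)=v_0\in\hs$ and, reading \eqref{enlmg} at $t=0$,
\[
v_t(0)=g(0)-\delta v_0-\Delta^2 u_0-c(0)\,(u_0)_{xx}=:v_1\in L^2(\Omega),
\]
and it is exactly the assumption $u_0\in H^4\cap\hs$ that makes $\Delta^2 u_0\in L^2(\Omega)$, hence $v_1\in L^2(\Omega)$.

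I would then solve this linear problem for $v$ by the Galerkin method in the basis $\{w_k\}$ of \eqref{eq:eigenvalueH2L2}, with initial data well prepared in $\hs\times L^2(\Omega)$ (the $H^4$-regularity of $u_0$ being used once more, to control the initial acceleration uniformly in the Galerkin index). Testing the approximate equation with $v_t$, and using the embeddings \eqref{embedding} and \eqref{embedding-H^2_*-u_xx} together with the uniform bound $\sup_{[0,T]}\big(\|u_x(t)\|_{L^2}+\|u_{xx}(t)\|_{L^2}\big)<\infty$, both the nonlocal term $c(t)v_{xx}$ and the coupling term $2S(u_x,v_x)_{L^2}u_{xx}$ contribute at most $C\,\|v(t)\|_{H^2_*}\|v_t(t)\|_{L^2}\le C\big(\|v(t)\|_{H^2_*}^2+\|v_t(t)\|_{L^2}^2\big)$, whence
\[
\frac{d}{dt}\Big(\|v_t(t)\|_{L^2}^2+\|v(t)\|_{H^2_*}^2\Big)\le C\Big(\|v_t(t)\|_{L^2}^2+\|v(t)\|_{H^2_*}^2\Big)+\|g_t(t)\|_{L^2}^2.
\]
Gronwall's lemma yields a bound on $\|v_t(t)\|_{L^2}^2+\|v(t)\|_{H^2_*}^2$ that is uniform on $[0,T]$ and in the Galerkin index, so passing to the limit produces a weak solution with $v\in L^\infty(0,T;\hs)$ and $v_t\in L^\infty(0,T;L^2(\Omega))$; uniqueness for the linear problem, together with a difference-quotient argument, shows $v=u_t$. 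Since $v_{tt}+Av=-\delta v_t-c(t)v_{xx}+2S(u_x,v_x)_{L^2}u_{xx}+g_t\in L^2(0,T;L^2(\Omega))$, with $A$ the isomorphism associated with $(\cdot,\cdot)_{H^2_*}$, Lemma~\ref{justification} applied with $(V,H,V')=(\hs,L^2(\Omega),(\hs)')$ upgrades this to $v\in C^0([0,T],\hs)$ and $v_t\in C^0([0,T],L^2(\Omega))$; that is, $u_t\in C^0([0,T],\hs)$ and $u_{tt}\in C^0([0,T],L^2(\Omega))$.

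Finally I would recover the spatial regularity. Rewriting \eqref{enlmg} pointwise in $t$,
\[
\Delta^2 u(t)=g(t)-u_{tt}(t)-\delta u_t(t)-c(t)\,u_{xx}(t)=:F(t),
\]
the previous step together with $u\in C^0([0,T],\hs)$ gives $F\in C^0([0,T],L^2(\Omega))$. For each $t$, $u(t)\in\hs$ solves $\Delta^2 u(t)=F(t)$ weakly together with the hinged/free boundary conditions of \eqref{eq:eigenvalueH2L2}; the elliptic regularity theory for this boundary value problem (see \cite{2015ferreroDCDSA}) gives $u(t)\in H^4\cap\hs$ with $\|u(t)\|_{H^4}\le C\big(\|F(t)\|_{L^2}+\|u(t)\|_{L^2}\big)$, and the continuity of $F$ in $t$ then yields the continuity of $t\mapsto u(t)$ into $H^4(\Omega)$ (for the continuity up to $t=0$ one also uses that $u_0$ belongs to the domain of $\Delta^2$). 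Collecting the three steps, $u\in C^0([0,T],H^4\cap\hs)\cap C^1([0,T],\hs)\cap C^2([0,T],L^2(\Omega))$, and reading \eqref{enlmg} once more shows that $u$ is a strong solution.

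The hard part is the second step: making the formal time-differentiation rigorous and producing the uniform energy estimate for $v=u_t$. The two delicate points there are the preparation of the Galerkin initial data so that the initial acceleration is controlled in $L^2(\Omega)$---precisely what the hypothesis $u_0\in H^4$ provides---and the observation that the nonlocal coefficient $c(t)v_{xx}$ and the coupling term $2S(u_x,v_x)_{L^2}u_{xx}$, although of the same differential order as $\Delta^2 v$, do not spoil the estimate, their contribution in the energy identity being absorbed by the energy itself through Gronwall's lemma. The only other nonroutine ingredient is the elliptic regularity used in the last step for the biharmonic operator under the mixed hinged/free boundary conditions, which is available from \cite{2015ferreroDCDSA}.
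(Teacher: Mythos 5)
Your proposal is correct in its essential mechanism and runs parallel to the paper's proof: both rest on differentiating the equation in time, observing that the resulting terms $c(t)v_{xx}$ and $2S(u_x,v_x)_{L^2}u_{xx}$ are controlled by $C\|v\|_{H^2_*}\|v_t\|_{L^2}$ (via \eqref{embedding-H^2_*-u_xx} and the a priori $H^2_*$-bound on $u$), applying Gronwall to the energy $\|v_t\|_{L^2}^2+\|v\|_{H^2_*}^2$, and then reading $\Delta^2 u=g-u_{tt}-\delta u_t-c(t)u_{xx}\in L^2$ off the equation to recover $H^4$-regularity; the hypothesis $u_0\in H^4$ enters in both arguments only to control the initial acceleration in $L^2$. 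The one genuine organizational difference is \emph{where} the time-differentiation is performed. The paper differentiates the Galerkin approximations $u_k$ of the nonlinear problem itself, where $u_k\in C^2([0,T],E_k)$ makes the computation rigorous and the uniform bounds pass directly to the limit, so no identification step is ever needed. You instead differentiate the limiting PDE, set up an auxiliary \emph{linear} non-autonomous problem for $v$, and must then prove $v=u_t$; you dispose of this in one clause (``uniqueness for the linear problem, together with a difference-quotient argument''), but this is the only non-routine step your route adds, and it deserves to be carried out: the natural argument is to show that the difference quotients $w_h=(u(\cdot+h)-u(\cdot))/h$ satisfy a perturbed version of the $v$-equation and converge to $v$ in the energy norm, which requires the continuity in $t$ of $c$ and of $u_{xx}$ into $L^2$ that you already have. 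Your route buys a cleaner conceptual separation (a linear theory applied to $v$, then elliptic regularity applied to $u(t)$), at the price of this identification; the paper's route avoids it entirely but keeps everything inside the nonlinear Galerkin scheme. Both ultimately invoke the same elliptic-regularity input for the biharmonic operator under the hinged/free conditions on the rectangle (the paper through the uniform $H^4$-bound on $u_k$ and the limit passage of \cite{FerGazMor}, you through the boundary value problem for $u(t)$ itself).
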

\begin{proof} We label the eigenfunctions $w_j$ of \eqref{eq:eigenvalueH2L2} with a unique index $j$ and, for all integer $k\ge1$, we set
$E_k=\textrm{span}(w_1,\dots,w_k)$ and we consider the orthogonal projection $Q_k:H^{2}_*(\Omega)\to E_k$. We set up the
weak formulation restricted to test functions $v\in E_k$, namely we seek $u_k\in C^2([0,T],E_k)$ that satisfies
\begin{empheq}{align}\label{weakformwk}
\left\{
\begin{array}{r}
((u_k)_{tt},v)_{L^2} + \delta ((u_k)_t,v)_{L^2} + ({u_k,v})_{H^2_*} +[-P+S\int_\Omega (u_k)_x^2]({(u_k)_x, v_x})_{L^2} =({g ,v})_{L^2}\\
u_k(0) = Q_k u_0, \quad (u_k)_ t(0)= Q_k v_0
\end{array}\right.
\end{empheq}
for all $v\in E_k$ and all $t>0$. The coordinates of $u_k$ in the basis $(w_i)$, given by $u^k_i = (u_k,w_i)_{L^2}$, are time-dependent functions and
from \eqref{weakformwk} we see that they solve the following systems of ODE's, for $i=1, \ldots, k$:
\begin{empheq}{align}\label{pvigik}
\left\{
\begin{array}{r}
(u^k_i)_{tt}(t) + \delta (u^k_i)_ t (t)+ \lambda_i u^k_i(t) +m_i^2\left[-P+S\sum_{j=1}^k m_j^2 u^k_j(t)^2\right]u^k_i(t) = ({g(t), w_i})_{L^2}\,,\\
u^k_i(0)= (u_0, w_i)_{L^2}, \quad (u^k_i)_ t(0)=(v_0, w_i)_{L^2}\,.
\end{array}\right.
\end{empheq}

Since the nonlinearity is analytic, from the classical theory of ODEs, we know that \eqref{pvigik} has a unique solution for each $i = 1, \ldots, k$
and that it can be extended to all $[0,T]$. Therefore \eqref{weakformwk} has a unique solution $u_k\in C^2([0,T],E_k)$, given by
\begin{empheq}{align}
u_k(\xi,t)= \sum_{i=0}^ku^k_i(t) w_i(\xi)\, .
\end{empheq}

Since $v_0 \in H^2_*(\Omega)$, and from the ODE in \eqref{weakformwk} we obtain that
\begin{equation}\label{eq:ODEsecorderStep1}
 \|(u_k)_t(0)\|_{H^2_*} \ \ \text{and} \ \ \|(u_k)_{tt}(0)\|_{L^2} \ \  \text{are uniformly bounded}.
\end{equation}

Then we differentiate \eqref{weakformwk} with respect to $t$, we take $v = (u_k)_{tt}$ and we infer that
\begin{empheq}{multline}  \label{eq:ODEsecder1}
\frac{1}{2} \frac{d}{dt}\left( \| (u_k)_{tt}\|_{L^2}^2 + \|(u_k)_t\|_{H^2_*}^2 \right) + \delta \| (u_k)_{tt}\|_{L^2}^2 = \left( g_t, (u_k)_{tt} \right)_ {L^2} +\vspace{5pt}\\
+ \left( -P(u_k)_{xxt} + \left(S \int_{\Omega}((u_k)_x)^2  \right) (u_k)_{xxt} + 2 S \left((u_k)_x, (u_k)_{xt}  \right)_{L^2}(u_k)_{xx},  (u_k)_{tt} \right)_ {L^2} \vspace{5pt}\\
\leq  \|g_t\|_{L^2} \|(u_k)_{tt}\|_{L^2} + \left(P + S \|(u_k)_x\|_{L^2}^2  \right) \| (u_k)_{xxt}\|_{L^2}\|(u_k)_{tt}\|_{L^2} \vspace{5pt}\\ + 2S \|(u_k)_x\|_{L^2} \|(u_k)_{xt}\|_{L^2} \|(u_k)_{xx}\|_{L^2}\|(u_k)_{tt}\|_{L^2}
\leq \|g_t\|_{L^2} \|(u_k)_{tt}\|_{L^2}+ C \|(u_k)_t\|_{H^2_*} \|(u_k)_{tt}\|_{L^2},
\end{empheq}
where, for the last inequality we have used the Poincaré inequality $\|(u_k)_{xt}\|_{L^2} \leq C \|(u_k)_t\|_{H^2_*}$ and that $\|(u_k)_x\|_{L^2}$ and $\|(u_k)_{xx}\|_{L^2}$ are uniformly bounded with respect to $t \in [0,T]$. For the latter, it is proved in \cite[p. 6318]{FerGazMor} that $u_k$ is uniformly bounded in $C([0,T], H^2_*(\Omega))$. Then, using Young's inequality, we infer that
\begin{empheq}{multline}  \label{eq:ODEsecder2}
\frac{1}{2} \frac{d}{dt}\left( \| (u_k)_{tt}\|_{L^2}^2 + \|(u_k)_t\|_{H^2_*}^2 \right) + \frac{\delta}{2} \| (u_k)_{tt}\|_{L^2}^2 \leq \frac{1}{2\delta}\|g_t\|_{L^2}^2 + C \|(u_k)_ t\|_{H^2_*} \|(u_k)_{tt}\|_{L^2}.
\end{empheq}
Hence, from \eqref{eq:ODEsecorderStep1} and Gronwall Lemma, we infer that
\[
\| (u_k)_{tt}\|_{L^2}^2 \ \ \text{and} \ \ \|(u_k)_t\|_{H^2_*}^2 \ \ \text{are uniformly bounded for all $t \in [0,T]$}
\]
and, by the equation
\[
\Delta^2u_k = - (u_k)_{tt} - \delta (u_k)_t+ [p-S\int_{\Omega} (u_k)_x^2](u_k)_{xx} + Q_kg
\]
we obtain that $\Delta^2 u_k$ is uniformly bounded in $L^2(\Omega)$ for all $t \in [0,T]$ and then that $u_k$ is uniformly bounded in $H^4(\Omega)$ for all $t \in [0,T]$. At this point we can proceed as in the proof of \cite[Theorem 3]{FerGazMor}, starting from p.6318, to finish the proof.
\end{proof}

\section{Proof of Theorem \ref{th:peridoc}}

We look at the PDE as the infinite dimensional dynamical system \eqref{infsystem} where the coefficients $g_k$ are defined by
\eqref{coef-g}. Let
$$g^n(\xi, t)= \sum_{k=1}^n g_k(t)w_k(\xi)\,.$$
We aim first to prove the existence of a periodic solution for this finite approximation of the forcing term and therefore deal with the infinite system
\begin{equation}
\ddot{h}_k(t)+\delta\dot{h}_k(t)+\lambda_kh_k(t)+m_k^2\left[-P+S\sum_{j=1}^\infty m_j^2h_j(t)^2\right]h_k(t)  =  g_k(t) \quad \text{ for }k=1,\ldots,n
\end{equation}

\begin{equation}
\ddot{h}_k(t)+\delta\dot{h}_k(t)+\lambda_kh_k(t)+m_k^2\left[-P+S\sum_{j=1}^\infty m_j^2h_j(t)^2\right]h_k(t)  =  0 \quad \text{ for }k\ge n+1.
\end{equation}
This is equivalent to look for a weak periodic solution $u^n$ of the PDE
\begin{empheq}{align}\label{enlmg-newtheo6}
\left\{
\begin{array}{rl}
u_{tt}+\delta u_t + \Delta^2 u +\left[P-S\int_\Omega u_x^2\right]u_{xx}= g^n(\xi,t)  &\textrm{in }\Omega\times(0,\tau)\\
u = u_{xx}= 0 &\textrm{on }\{0,\pi\}\times[-\ell,\ell]\\
u_{yy}+\sigma u_{xx} = u_{yyy}+(2-\sigma)u_{xxy}= 0 &\textrm{on }[0,\pi]\times\{-\ell,\ell\}.
\end{array}
\right.
\end{empheq}
Since we are only interested in existence, we can look for a time periodic solution having all components $h_k$ identically zero for $k\ge n+1$ and this yields, for the $n$ first components of the solution, the finite system
\begin{equation}\label{finsystem-newprooftheo6-2}
\ddot{h}_k(t)+\delta\dot{h}_k(t)+\lambda_kh_k(t)+m_k^2\left[-P+S\sum_{j=1}^n m_j^2h_j(t)^2\right]h_k(t)  = g_k(t) \quad \text{ for }k=1,\ldots,n.
\end{equation}
This means that we seek a $\tau$-periodic solution $u^n$ of \eqref{enlmg-newtheo6} in the form
\begin{equation}\label{un}
u^n(\xi,t):=\sum_{k=1}^n h_k(t)w_k(\xi).
\end{equation}
The Fourier coefficients $h_k$ also depend on $n$ but we voluntarily write $h_k$ to simplify the notations.\par
We introduce the spaces $C_{\tau}^2(\R)$ and $C_{\tau}^0(\R)$ of $C^2$ and $C^0$ $\tau$-periodic scalar functions.
Then we define the linear diagonal operator $L_n: (C_{\tau}^2(\R))^n\to (C_{\tau}^0(\R))^n $ whose $k$-th component is given by
$$L_n^k(h_1,\ldots,h_n) = \ddot{h}_k(t)+\delta\dot{h}_k(t)+(\lambda_k-m_k^2P)h_k(t)\qquad(k=1,\ldots,n)$$
and the potential $G_n$ defined by
$$G_n(h_1,\ldots,h_n)  = \frac{S}4\sum_{j,k=1}^n m_j^2m_k^2h_j^2h_k^2.$$
It is also convenient to use the boldface notation $\bold{s}=(s_1,\ldots,s_n)$ for any $n$-tuple. With these notations, \eqref{finsystem-newprooftheo6-2} becomes
$$L_n(\bold{h}(t)) + \nabla G_n(\bold{h}(t)) = \bold{g}(t).$$
Since $\delta>0$, for all $\bold{q}\in  (C_{\tau}^0(\R))^n$ there exists a unique $\bold{h}\in (C_{\tau}^2(\R))^n$ such that $L_n(\bold{h})=
\bold{q}$ and $\bold{h}$ may be found explicitly by solving the diagonal system of linear
ODEs. Thanks to the compact embedding $(C_{\tau}^2(\R))^n\subset (C_{\tau}^0(\R))^n$, the inverse $L_n^{-1}:(C_{\tau}^0(\R))^n\to (C_{\tau}^0(\R))^n$ is a compact operator. Consider the nonlinear map
$\Gamma_n:(C_{\tau}^0(\R))^n\times[0,1]\to (C_{\tau}^0(\R))^n$ defined by
$$\Gamma_n(\bold{h},\nu)=L_n^{-1}\left(\bold{g}-\nu \nabla G_n(\bold{h}) \right)\qquad\forall(\bold{h},\nu)\in (C_{\tau}^0(\R))^n\times[0,1]\, .$$
The map $\Gamma_n$ is also compact and, moreover, it satisfies the following property: there exists $H_n>0$ (independent of $\nu$) such that if $\bold{h}\in (C_{\tau}^0(\R))^n$ solves
the equation $\bold{h}=\Gamma_n(\bold{h},\nu)$, then
\begin{equation}\label{Hn}
\|\bold{h}\|_{(C_{\tau}^0(\R))^n}\le H_n.
\end{equation}
Indeed, by Lemma \ref{energybound-PDE}, any periodic solution $u$ of
\begin{equation}\label{enlmg-newtheo6-new}
u_{tt}+\delta u_t + \Delta^2 u +\left[P-\nu S \int_\Omega u_x^2\right]u_{xx}= g^n(\xi,t)\  \textrm{ in }\Omega\times(0,\tau)
\end{equation}
satisfies energy bounds that do not depend on $\nu$, namely\par
$\diamondsuit$ for $\delta^2\le 4(\lambda_1-P)$
\begin{equation*}
\max_{t\in [0,\tau]}E_{\delta/2}(t)\le  \frac{2}{\delta^2} \max_{t\in[0,\tau]}\|g^n(t)\|_{L^2}^2\le \frac{2}{\delta^2} \max_{t\in[0,\tau]}\|g(t)\|_{L^2}^2\, ,
\end{equation*}

$\diamondsuit$ for $\delta^2\ge4(\lambda_1-P)$,
$$
\max_{t\in [0,\tau]}E_\mu(t)\le \frac{1}{2(\lambda_1-P)}\max_{t\in[0,\tau]}\|g(t)\|_{L^2}^2\, ,\qquad\mbox{where }\mu=\frac\delta2-\frac12\sqrt{\delta^2-4(\lambda_1-P)}.
$$
These energy bounds give $H^2_*(\Omega)$-bounds on $u$ and $L^2$-bound on $u_t$ as shown by Lemmas \ref{lem:L2bound-u_t} and \ref{lem:H2bound} (we use here the periodicity of $g$ and $u$). Back to the finite dimensional Hamiltonian system \eqref{finsystem-newprooftheo6-2}, this yields the desired $(C_{\tau}^0(\R))^n$-bound in \eqref{Hn}. Hence, since the equation $\bold{h}=\Gamma_n(\bold{h},0)$ admits a unique solution, the Leray-Schauder principle ensures the existence of a solution $\bold{h}\in (C_{\tau}^0(\R))^n$ of $\bold{h}=\Gamma_n(\bold{h},1)$.
This proves the existence of a $\tau$-periodic solution of the finite system \eqref{finsystem-newprooftheo6-2} and, equivalently, of the PDE \eqref{enlmg-newtheo6}. Let us denote this solution by $u^n$, see \eqref{un}.

To complete the proof of Theorem  \ref{th:peridoc}, we now show that the sequence $(u^n)_n$ converges to a periodic solution $u$ of \eqref{enlmg}. Since the energy bounds on $u^n$ are independent of $n$, the $H^2_*$-bounds on $u^n$ and the $L^2$-bounds on $u^n_t$ are also independent of $n$.
The equation in weak form
\begin{empheq}{align}\label{weakform-newtheo6}
\langle u^n_{tt},v \rangle + \delta (u^n_t,v)_{L^2} + (u^n,v)_{H^2_*} +\big[S\|u^n_x\|_{L^2}^2-P\big](u^n_x,v_x)_{L^2}= (g^n,v)_{L^2}\,,
\end{empheq}
for all $t\in[0,\tau]$ and all $v\in H^{2}_*(\Omega)$, then yields a $(H^{2}_*)'$ bound on $u^n_{tt}$. Up to a subsequence, we can therefore pass to the limit
in the weak formulation of \eqref{enlmg-newtheo6}:
$$
\begin{array}{l}
u^n\to u\mbox{ weakly* in }L^\infty([0,\tau],H^{2}_*(\Omega)),\\
u^n_t\to u_t\mbox{ weakly* in } L^\infty([0,\tau],L^2(\Omega)),\\
u^n_{tt}\to u_{tt}\mbox{ weakly* in } L^\infty([0,\tau],(H^{2}_*(\Omega))').
\end{array}
$$
Hence, there exists a $\tau$-periodic solution $u$ of equation \eqref{enlm}, satisfied in the sense of $L^\infty([0,\tau],(H^{2}_*(\Omega))')$.
To conclude, observe that the continuity properties of $u$ follow from Lemma \ref{justification} and therefore $u$ is also a weak solution in the sense of Definition \ref{df:weaksolution}.

\section{Proof of Theorem \ref{SCstability}}\label{proof2}

The proof of Theorem \ref{SCstability} is based on the following statement.

\begin{lemma}\label{lemme-naze}
Assume \eqref{smallginfty}. There exists $g_0=g_0(\delta,S,P,\lambda_1)>0$ such that if
\begin{equation}\label{g0}
g_\infty=\limsup_{t\to\infty}\|g(t)\|_{L^2}<g_0,
\end{equation}
then there exists $\eta>0$ such that
$$\lim_{t\to\infty} {\rm e}^{\eta t}\left(\|u_t(t)-v_t(t)\|_{L^2}+\|u(t)-v(t)\|_{H^2_*}\right) =0$$
for any two solutions $u$ and $v$ of \eqref{enlmg}.
\end{lemma}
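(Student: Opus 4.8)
The idea is to build an exponentially decaying Lyapunov functional for the difference $w:=u-v$ of two solutions of \eqref{enlmg}. Subtracting the two equations and rewriting the nonlocal coupling, $w$ solves
\[
w_{tt}+\delta w_t+\Delta^2 w+Pw_{xx}=S\|u_x\|_{L^2}^2\,w_{xx}+S(u_x+v_x,w_x)_{L^2}\,v_{xx}=:f(\xi,t),
\]
and, since $u,v\in C^0(\R_+,\hs)$, one has $f\in C^0(\R_+,L^2(\Omega))$ (the nonlocal prefactors are scalar functions of $t$, while $u_{xx},v_{xx}\in L^2(\Omega)$). The first step is to fix the ``smallness reservoir'': combining Lemma \ref{energybound-PDE} (which gives $E_\alpha(\infty)\le C g_\infty^2$ with $C=C(\delta,\lambda_1,P)$) with Lemmas \ref{lem:L2bound}, \ref{lem:L2bound-u_x}, \ref{lem:H2bound} and \eqref{embedding-H^2_*-u_xx}, one obtains a constant $M=M(g_\infty)$ with $M(g_\infty)\le C_M g_\infty^2$ ($C_M$ independent of $S$) and a time $t_0>0$ such that $\|u_x(t)\|_{L^2}^2+\|v_x(t)\|_{L^2}^2+\|v_{xx}(t)\|_{L^2}^2\le M$ for all $t\ge t_0$.

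Next, for a small parameter $\theta>0$ to be chosen, consider
\[
G_\theta(t):=\tfrac12\|w_t(t)\|_{L^2}^2+\tfrac12\big(\|w(t)\|_{H^2_*}^2-P\|w_x(t)\|_{L^2}^2\big)+\theta\into w\xit w_t\xit\,d\xi .
\]
Since $P<\lambda_1$, the bilinear form $a(\varphi,\psi):=(\varphi,\psi)_{H^2_*}-P(\varphi_x,\psi_x)_{L^2}$ is coercive on $\hs$ by \eqref{embedding}, so for $\theta$ small $G_\theta$ is equivalent to $\|w_t\|_{L^2}^2+\|w\|_{H^2_*}^2$ and, in particular, $G_\theta\ge0$. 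The differentiation of $G_\theta$ is legitimate by Lemma \ref{justification} applied with $V=\hs$, $H=L^2(\Omega)$, the form $a$, and the source $f-\delta w_t\in L^2(0,T;L^2)$ — this is what makes pairing the equation with $w_t$ and with $w$ licit for merely weak solutions. One obtains
\[
\dot G_\theta=(\theta-\delta)\|w_t\|_{L^2}^2-\theta\,a(w,w)-\theta\delta\into w\xit w_t\xit\,d\xi+(f,w_t)_{L^2}+\theta(f,w)_{L^2},
\]
and, for $\theta$ small enough depending only on $\delta$, $\lambda_1$, $P$, the first three terms are bounded above by $-\kappa\big(\|w_t\|_{L^2}^2+a(w,w)\big)$ for some $\kappa>0$, by Young's inequality and \eqref{embedding}.

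The heart of the proof is the control of the two source terms. For $t\ge t_0$, using $\|w_{xx}\|_{L^2}\le\gamma^{-1/2}\|w\|_{H^2_*}$, $\|w_x\|_{L^2},\|w\|_{L^2}\le\lambda_1^{-1/2}\|w\|_{H^2_*}$ and the bound $M$, one gets $(f,w_t)_{L^2}+\theta(f,w)_{L^2}\le C_1\,M\,\big(\|w_t\|_{L^2}^2+a(w,w)\big)$ with $C_1=C_1(\theta,S,\lambda_1,P,\gamma)$ proportional to $S$; hence $\dot G_\theta\le(C_1M-\kappa)\big(\|w_t\|_{L^2}^2+a(w,w)\big)$ for $t\ge t_0$. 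Choosing $g_0=g_0(\delta,S,P,\lambda_1)>0$ so small that $M(g_0)<\kappa/(2C_1)$ — which, because $M\asymp g_\infty^2$ while $C_1\asymp S$, forces $g_0\asymp S^{-1/2}$, matching the remark following Theorem \ref{SCstability} — we get, for $g_\infty<g_0$, $\dot G_\theta\le-\tfrac\kappa2\big(\|w_t\|_{L^2}^2+a(w,w)\big)\le-\eta_0 G_\theta$ on $[t_0,\infty)$ for some $\eta_0>0$. Thus $G_\theta(t)\le G_\theta(t_0)e^{-\eta_0(t-t_0)}$, and since $G_\theta$ bounds $\|w_t\|_{L^2}^2+\|w\|_{H^2_*}^2$ from below, $\|w_t(t)\|_{L^2}+\|w(t)\|_{H^2_*}\le Ce^{-\eta_0 t/2}$, which is the claim with any $\eta<\eta_0/2$. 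The main obstacle is precisely this last estimate: one must verify that every remainder produced by the nonlinear nonlocal coupling — the term $S\|u_x\|^2 w_{xx}$ from the nonlocal coefficient itself and $S(u_x+v_x,w_x)v_{xx}$ from the difference of the two coefficients — is genuinely quadratic in $(w_t,w)$ with prefactor governed by the asymptotic amplitude $M(g_\infty)$, so that it is absorbed by the dissipation when $g_\infty$ is small; this is exactly where the quantitative energy bounds of Section \ref{enest} enter.
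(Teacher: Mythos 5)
Your proposal is correct, and it reaches the conclusion by a mechanism that differs from the paper's in an interesting way. The paper sets $w=(u-v)e^{\eta t}$ from the outset, observes that $w$ solves the \emph{linear} auxiliary problem \eqref{perturb-eq} with source $h(\xi,t)e^{\eta t}$, $h=S(u_{xx}\|u_x\|_{L^2}^2-v_{xx}\|v_x\|_{L^2}^2)$, bounds $\limsup_t\|h e^{\eta t}\|_{L^2}^2\le K_g\|w\|_{H^2_*}^2$ with $K_g\to0$ as $g_\infty\to0$ (using exactly the Lemmas \ref{lem:L2bound-u_x} and \ref{lem:H2bound} you invoke), and then closes the argument by the absorption inequality $\limsup_t\|w\|_{H^2_*}^2\le C(\eta)K_g\limsup_t\|w\|_{H^2_*}^2$ coming from Lemma \ref{boundsforlinear}, concluding the limsup vanishes once $C(\eta)K_g<1$; the $L^2$ bound on $w_t$ then follows from \eqref{utbound-lin}. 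You instead keep $w=u-v$ unweighted, build the perturbed energy $G_\theta$, and derive the genuine differential inequality $\dot G_\theta\le-\eta_0G_\theta$ on $[t_0,\infty)$, from which exponential decay follows by Gronwall. The two routes share the decisive input — the same decomposition of the nonlinear remainder into $S\|u_x\|_{L^2}^2w_{xx}+S(u_x+v_x,w_x)_{L^2}v_{xx}$, quadratic in $w$ with coefficients of size $O(g_\infty^2)$ by Section \ref{enest} — and both yield $g_0\asymp S^{-1/2}$. What your version buys is that it avoids the delicate point implicit in the paper's absorption step, namely that one must know a priori that $\limsup_t e^{2\eta t}\|u(t)-v(t)\|_{H^2_*}^2$ is finite before dividing by it; your Gronwall argument produces the exponential rate constructively. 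What the paper's version buys is reusability: Lemma \ref{boundsforlinear} is stated once for the linear problem \eqref{perturb-eq} and is recycled verbatim in the proofs of Theorems \ref{SCstability2} and \ref{smalltorsion}. Two cosmetic remarks: the asymptotic bound $M$ should be taken as the limsup bound plus an arbitrary $\varepsilon$ to legitimately hold pointwise for $t\ge t_0$; and the differentiation of the cross term $\theta\into w\,w_t$ deserves the same justification the paper gives for $E_\alpha$ (Lemma \ref{justification} together with the argument of Lemma \ref{fubini}), but this is at the level of rigor the paper itself adopts.
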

\begin{proof} Let $\eta>0$, to be fixed later. If $u$ and $v$ are two solutions of \eqref{enlmg}, then $w=(u-v){\rm e}^{\eta t}$ is such that
$$
\langle w_{tt},\varphi \rangle  + (\delta-2\eta) (w_t,\varphi)_{L^2} + (w,\varphi)_{H^2_*}  -P(w_x,\varphi_x)_{L^2}  -  \eta(\delta-\eta) (w,\varphi)_{L^2}   = (h(\xi,t){\rm e}^{\eta t},\varphi)_{L^2}
$$
for all $t\in[0,T]$ and all $\varphi\in H^{2}_*(\Omega)$, where
$$h(\xi,t) = S\left( u_{xx}(\xi,t)\int_\Omega u_x^2(\xi,t)d\xi- v_{xx}(\xi,t)\int_\Omega v_x^2(\xi,t)d\xi\right).$$
To get estimates on $w$, we estimate first the $L^2$ norm of $h(\xi,t){\rm e}^{\eta t}$.
We write
$$h(\xi,t) {\rm e}^{\eta t}= S\left(u_{xx}(\xi,t){\rm e}^{\eta t} \int_\Omega (u_x^2-v_x^2)(\xi,t)d\xi+w_{xx}(\xi,t)\int_\Omega v_x^2(\xi,t)d\xi\right).$$
Therefore, we have
$$\|h(t) {\rm e}^{\eta t}\|_{L^2}\le S\big(\|u_{xx}(t)\|_{L^2}\|w_x(t)\|_{L^2}\|u_x(t)+v_x(t)\|_{L^2}+\|w_{xx}(t)\|_{L^2}\|v_x(t)\|_{L^2}\big)$$
so that, by combining \eqref{embedding} with Lemmas \ref{lem:L2bound-u_x} and \ref{lem:H2bound}, we deduce that there exists $K_g>0$ such that
\begin{equation}\label{fundamental}
\limsup_{t\to\infty}\|h(t) {\rm e}^{\eta t}\|_{L^2}^2\le K_g \|w(t)\|_{H^2_*}^2
\end{equation}
and, for a family of varying $g\in C^0(\R_+,L^2(\Omega))$,
\begin{equation}\label{Kg}
K_g\to0\qquad\mbox{if}\qquad g_\infty\to0.
\end{equation}

Taking into account the $H^2$-estimate \eqref{uH2bound-lin} for the linear equation \eq{perturb-eq}, we get
\begin{eqnarray*}
\limsup_{t\to\infty}\|w(t)\|_{H^2_*}^2 & \le & \frac{\lambda_1}{\lambda_1-P-\eta}\left(\max\Big(\frac4{\delta^2}, \frac{1}{\lambda_1-P-\eta}\Big)+\frac{1}{\lambda_1-P-\eta}\right)\limsup_{t\to\infty}\|h(t) {\rm e}^{\eta t}\|_{L^2}^2\\
\mbox{by \eqref{fundamental} }\ & \le & \frac{\lambda_1\, K_g}{\lambda_1-P-\eta}\left(\max\Big(\frac4{\delta^2}, \frac{1}{\lambda_1-P-\eta}\Big)+\frac{1}{\lambda_1-P-\eta}\right)\limsup_{t\to\infty}\|w(t)\|_{H^2_*}^2.
\end{eqnarray*}

Therefore we infer that there exists $\eta>0$ such that
\begin{equation}\label{limitH2}
\lim_{t\to\infty}\|w(t)\|_{H^2_*}=0
\end{equation}
as soon as
$$\frac{\lambda_1\, K_g}{\lambda_1-P-\eta}\left(\max\Big(\frac4{\delta^2}, \frac{1}{\lambda_1-P-\eta}\Big)+\frac{1}{\lambda_1-P-\eta}\right)<1.$$
In view of \eqref{Kg} this happens provided that \eqref{g0} holds for a sufficiently small $g_0=g_0(\delta,S,P,\lambda_1)>0$. Hence, if \eqref{g0} is
fulfilled, then \eqref{limitH2} holds and from \eqref{fundamental} we deduce that also
$$\limsup_{t\to\infty}\|h(t){\rm e}^{\eta t}\|_{L^2}=0\, .$$
Therefore, the estimate \eqref{utbound-lin} for the linear equation \eq{perturb-eq} gives
$$\limsup_{t\to\infty}\|w_t(t)\|_{L^2}=0$$
as well. Since $w_t = \left(\eta (u-v) + (u_t-v_t)\right){\rm e}^{\eta t}$ and $\|{\rm e}^{\eta t}(u-v)\|_{L^2}\to 0$ by \eqref{limitH2},
the proof is complete.\end{proof}

Back to the proof of Theorem \ref{SCstability}, assume first that $g$ is $\tau$-periodic for some $\tau>0$. Then Theorem \ref{exuniq} gives the
existence of a $\tau$-periodic solution $U^p$. If $V$ is another periodic solution (of any period!), then from Lemma \ref{lemme-naze} we know that
$$
\lim_{t\to\infty} {\rm e}^{\eta t}\left(\|U^p_t(t)-V_t(t)\|_{L^2}+\|U^p(t)-V(t)\|_{H^2_*}\right) =0
$$
so that the period of $V$ is also $\tau$ and $V=U^p$. This proves uniqueness of the periodic solution.\par
Finally, assume that $g$ is even with respect to $y$. Let $U$ be a solution of \eqref{enlmg}-\eqref{initialc} with initial data being
purely longitudinal, that is, $U(\xi,0),U_t(\xi,0)\in\he$, see \eqref{decomposition}. Writing $U$ in the form \eqref{Fourier}, with its Fourier components satisfying \eqref{infsystem}, we see that the torsional Fourier components $h^n$ of $U$ satisfy
$$
\ddot{h}^n(t)+\delta\dot{h}^n(t)+\nu h^n(t)+m^2\left[-P+S\sum_{j=1}^\infty m_j^2h_j(t)^2\right]h^n(t)=0\, ,\qquad \dot{h}^n(0)=h^n(0)=0
$$
since $g$ is even with respect to $y$ and its torsional Fourier components are zero. Therefore, $h^n(t)\equiv0$ for all torsional Fourier
coefficient $h^n$ and $U$ is purely longitudinal:
\begin{equation}\label{purelong}
U(t)=U^L(t)\, ,\qquad U^T(t)\equiv0.
\end{equation}

Take now any solution $V$ of \eqref{enlmg}: then, by Lemma \ref{lemme-naze}, we have
\begin{eqnarray*}
0 &=& \lim_{t\to\infty}\left(\|U_t(t)-V_t(t)\|_{L^2}^2+\|U(t)-V(t)\|_{H^2_*}^2\right)\\
\mbox{by \eqref{purelong} } &=& \lim_{t\to\infty}\left(\|U_t^L(t)-V_t(t)\|_{L^2}^2+\|U^L(t)-V(t)\|_{H^2_*}^2\right)\\
\mbox{by orthogonality \eqref{decomposition} } &=& \lim_{t\to\infty}\left(\|U_t^L(t)-V_t^L(t)\|_{L^2}^2+\|V_t^T(t)\|_{L^2}^2+
\|U^L(t)-V^L(t)\|_{H^2_*}^2+\|V^T(t)\|_{H^2_*}^2\right)\\
&\ge& \lim_{t\to\infty}\left(\|V_t^T(t)\|_{L^2}^2+\|V^T(t)\|_{H^2_*}^2\right)\, .
\end{eqnarray*}
According to Definition \ref{defstab} this implies that $g$ makes the system \eqref{enlmg} torsionally stable.

\section{Proof of Theorem \ref{th:multiple-periodic-solutions}} \label{section:multuplicity}

To prove Theorem \ref{th:multiple-periodic-solutions} we follow very closely the arguments in \cite[Section 2]{souplet}, the main
difference being the presence of $b\neq0$ in the below equation \eqref{eq:second-order-nu}.

The proof of Theorem \ref{th:multiple-periodic-solutions} is a straightforward consequence of the following statement.

\begin{proposition}\label{propSouplet}
There exist $T>0$ and a $T$-antiperiodic function $f \in C^{\infty}(\mathbb{R})$ such that the equation
\begin{equation}\label{eq:second-order-nu}
\ddot{v} +\dot{v} + b v + v^3 = f(t)
\end{equation}
admits at least two distinct $T$-antiperiodic solutions of class $C^{\infty}(\mathbb{R})$.
\end{proposition}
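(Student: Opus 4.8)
The plan is to construct the forcing term $f$ together with its two solutions at the same time, following closely the argument of \cite[Section 2]{souplet}. Write $N(v):=\ddot v+\dot v+bv+v^3$. We look for two smooth $T$-antiperiodic functions $v_1\neq v_2$ with $N(v_1)=N(v_2)$, and then simply set $f:=N(v_1)$. The key observation is that $f$ is then automatically smooth and $T$-antiperiodic: since $v_1(t+T)=-v_1(t)$, and $v\mapsto\ddot v+\dot v+bv$ is linear while $v\mapsto v^3$ is odd, one gets $N(v_1)(t+T)=-N(v_1)(t)$. It is precisely the oddness of the cubic (together with the linearity of the lower-order term $bv$) that makes this possible. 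Hence everything reduces to the ``difference equation'' obtained by subtracting two copies of \eqref{eq:second-order-nu}: with $w:=v_1-v_2\not\equiv0$,
\[
\ddot w+\dot w+bw+\big(v_1^3-v_2^3\big)=0 .
\]

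First I would pass to the mean $s:=v_1+v_2$, so that $v_1=\tfrac12(s+w)$ and $v_2=\tfrac12(s-w)$, both smooth and $T$-antiperiodic whenever $s$ and $w$ are. A direct computation gives $v_1^3-v_2^3=\tfrac14\,w\,(3s^2+w^2)$, and the difference equation becomes
\[
s^2=\frac{-4(\ddot w+\dot w+bw)-w^3}{3w}=:\Sigma_w .
\]
One checks immediately that, whenever $w$ is $T$-antiperiodic, $\Sigma_w$ is automatically $T$-periodic, and that the parameter $b$ enters only through the additive term $-4bw$ in the numerator; this is the only structural difference with \cite{souplet}, and it is harmless. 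The proof is thus reduced to producing \emph{one} smooth $T$-antiperiodic $w\not\equiv0$ for which $\Sigma_w$ is a nonnegative smooth $T$-periodic function admitting a smooth $T$-antiperiodic square root $s$. Given such a $w$, one defines $v_1,v_2$ and $f:=N(v_1)$ as above; the identity $3w\,s^2=-4(\ddot w+\dot w+bw)-w^3$ yields $N(v_2)=N(v_1)=f$, the two solutions are distinct because $w\not\equiv0$, and all regularity and antiperiodicity properties hold by construction.

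The construction of $w$ is the technical core, and here I would follow \cite{souplet} step by step. The zeros of $w$ can be taken simple, but each must satisfy the compatibility relation $\ddot w+\dot w=0$, for otherwise the numerator of $\Sigma_w$ fails to vanish where the denominator does and $\Sigma_w$ becomes singular; under this relation $\Sigma_w$ extends to a smooth $T$-periodic function. Next one needs $\Sigma_w\ge0$ everywhere -- a magnitude condition involving $w$, its first three derivatives and $b$ -- which forces a careful, small-amplitude choice of $w$. Finally, and most delicately, $\Sigma_w$ must vanish to \emph{exactly} second order at a single point of the period and be strictly positive elsewhere, so that the signed square root $s$ is smooth and genuinely $T$-antiperiodic and not merely $T$-periodic. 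Souplet builds $w$ explicitly, by gluing elementary functions on subintervals of $[0,T]$, so that all these requirements hold simultaneously; the extra term $bw$ is absorbed into the elementary estimates or made negligible by shrinking $|w|$.

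The main obstacle is precisely this last matching: a naive choice of $w$ (say a pure sine) either violates the compatibility relation and makes $\Sigma_w$ blow up at the zeros of $w$, or produces a square root that is only continuous, or only $T$-periodic -- which would give $v_1,v_2$ that are $2T$-periodic but not $T$-antiperiodic, hence useless. Controlling at once the positivity of $\Sigma_w$ and the exact order of its single zero is what dictates the delicate explicit construction of \cite[Section 2]{souplet}. Once Proposition \ref{propSouplet} is established, Theorem \ref{th:multiple-periodic-solutions} follows by restricting \eqref{enlmg} to a single longitudinal mode and rescaling time and amplitude, which turns the equation for that mode's coefficient into \eqref{eq:second-order-nu} for a suitable $b$ and a suitable choice of $\delta,P,S$.
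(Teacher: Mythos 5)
Your proposal is correct and follows essentially the same route as the paper: your $\Sigma_w$ is exactly $\tfrac43\phi$ in the paper's notation (you parametrize by the sum $s=v_1+v_2$ where the paper solves the quadratic \eqref{eq:second-order-polynomial} for $u$ and reads off the discriminant \eqref{eq:discriminant}, which is the same algebra), and both arguments then reduce to adapting Souplet's construction of $w$ so that $\phi$ is smooth, positive off $T\mathbb{Z}$, and vanishes to exactly second order at the zeros of $w$. One small caution: the $b$-term contributes the \emph{constant} $-4b/3$ to $\Sigma_w$ and so cannot be ``made negligible by shrinking $|w|$''; it must be absorbed by adjusting the local model of $w$ near its zero (this is precisely the role of the degree-$5$ polynomial with $b$-dependent coefficients in the paper's Lemma \ref{lemma:polynomial}), which is the first alternative you mention.
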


Indeed, taking Proposition \ref{propSouplet} for granted, let $v^1$ and $v^2$ be two distinct $T-$antiperiodic solutions of
\eqref{eq:second-order-nu} and set
$$
u^i(\xi,t)=v^i(t)\phi(\xi)\qquad(i=1,2)
$$
where $\phi\in C^\infty(\overline{\Omega})$ is an $L^2$-normalized eigenfunction of \eqref{eq:eigenvalueH2L2}, associated to some eigenvalue
$\lambda$. Then it is straightforward that $u^i$ satisfies
$$
u^i_{tt}+\delta u^i_t + \Delta^2 u^i + \left[P - S \int_\Omega (u^i_x)^2\right] u^i_{xx}$$
$$
=\phi(\xi)\big[\ddot{v}^i+\delta\dot{v}^i+(\lambda-Pm^2)v^i+Sm^4(v^i)^3\big]=\phi(\xi)f(t)\quad\textrm{in }\Omega\times(0,T)
\qquad(i=1,2)\, .
$$
Therefore, we have two periodic solutions of \eqref{enlmg} for $\delta=1$, $\lambda-Pm^2=b$, $S=m^{-4}$, $g(\xi,t)=\phi(\xi)f(t)$.
This completes the proof of Theorem \ref{th:multiple-periodic-solutions}, provided that Proposition \ref{propSouplet} holds.\par\smallskip

So, let us now prove Proposition \ref{propSouplet}. We suppose that $u$ and $v$ are two solutions of \eqref{eq:second-order-nu} and
we set $w=v-u$. Then
\begin{eqnarray*}
\ddot{w} +\dot{w} + b w + w^3 =\ddot{v} +\dot{v} + b v + v^3 - (\ddot{u} +\dot{u} + b u + u^3) - 3 v^2 u + 3 v u^2 = - 3 v^2 u + 3 v u^2
\end{eqnarray*}
and from the identity $3 u w^2 + 3 u^2 w = 3 u v^2 - 3 u^2v$ we infer that
\begin{equation}\label{eq:second-order-for-w}
\ddot{w} +\dot{w} + b w + 3 u w^2 + 3 u^2w + w^3 =0.
\end{equation}
So, at every point where $w \neq 0$,  $u$ is a root of a second order polynomial, namely
\begin{equation}\label{eq:second-order-polynomial}
 3 u^2 + 3 w u + \left( w^2 + \frac{\ddot{w} + \dot{w}}{w} + b \right) = 0,
\end{equation}
whose discriminant reads
\begin{equation}\label{eq:discriminant}
 9 w^2 - 12 \left( w^2 + \frac{\ddot{w} + \dot{w}}{w} + b \right) = - 12\left( \frac{w^2}{4}  + \frac{\ddot{w} + \dot{w}}{w} + b \right).
\end{equation}

To construct the appropriate source term $f(t)$, inspired by the expression \eqref{eq:discriminant}, we start with the following local result.

\begin{lemma}\label{lemma:polynomial}
There exist a real polinomial $P$ of degree $5$ and a neighborhood $V$ of $0$ such that
\[
\begin{array}{l}
P(t) t >0 \ \ \text{on} \ \ V \backslash \{0\} \ \ \text{and} \ \ \dot{P}(t) >0 \ \ \text{on} \ \ V,  \vspace{5pt}\\
\phi_P(t) :=  \left\{ \begin{array}{cl}- \left( \frac{P^2(t)}{4} + \frac{\ddot{P}(t) + \dot{P}(t)}{P(t)} + b\right)   & \text{if} \ \ t \in V \backslash\{0\}, \vspace{5pt}\\
0 &  \text{if} \ \ t =0
\end{array}
\right.  \qquad \text{is of class $C^{\infty}(V)$}, \vspace{5pt}\\
\phi_P > 0 \ \ \text{on} \ \ V\backslash\{0\},\ \  \dot{\phi}_P(0) = 0 \ \ \text{and} \ \ \ddot{\phi}_P(0)>0\,.
\end{array}
\]
\end{lemma}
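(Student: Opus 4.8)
The plan is to seek $P$ as a degree-five polynomial vanishing at the origin, $P(t)=\sum_{j=1}^{5}a_j t^j$ with $a_1>0$, and to fix its five coefficients one after the other by imposing, in this order, the smoothness of $\phi_P$ at $0$, then $\phi_P(0)=0$, then $\dot\phi_P(0)=0$, and finally $\ddot\phi_P(0)>0$; the remaining requirements ($P(t)t>0$ and $\dot P>0$ near $0$, and $\phi_P>0$ on $V\setminus\{0\}$) will then follow automatically after shrinking $V$.

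Since $P(0)=0$ I would write $P(t)=t\,Q(t)$ with $Q$ a degree-four polynomial and $Q(0)=a_1$; then $P(t)t=t^2Q(t)$ and $\dot P(0)=a_1$, so $a_1>0$ together with continuity of $Q$ already secures $P(t)t>0$ and $\dot P>0$ on a sufficiently small neighbourhood of $0$. The crucial point — and the one I expect to be the main obstacle — is that $\phi_P$ contains the quotient $(\ddot P+\dot P)/P$, which has an a priori pole at $0$: to make it (and hence $\phi_P$) smooth near $0$ I would require the numerator to vanish at $0$, i.e. $\ddot P(0)+\dot P(0)=a_1+2a_2=0$, forcing $a_2=-a_1/2$. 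With this choice $\ddot P+\dot P=t\,R(t)$ for a polynomial $R$, and $(\ddot P+\dot P)/P=R/Q=:\psi$ is of class $C^\infty$ on a neighbourhood $V_0$ of $0$ because $Q(0)=a_1\neq0$; consequently $\phi_P=-\tfrac14 P^2-\psi-b\in C^\infty(V_0)$.

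It then remains to adjust $a_3,a_4,a_5$. Using the identity $R=\psi Q$ and matching Taylor coefficients at $0$, one reads off $\psi(0)$, $\dot\psi(0)$, $\ddot\psi(0)$ as affine functions of $a_3$, then $a_4$, then $a_5$ respectively. Since $\phi_P(0)=-(\psi(0)+b)$, imposing $\psi(0)=-b$ determines $a_3$ and gives $\phi_P(0)=0$. Because $P(0)=0$ one has $\dot\phi_P(0)=-\dot\psi(0)$, so imposing $\dot\psi(0)=0$ — a linear equation in $a_4$ once $a_3$ is fixed — determines $a_4$ and gives $\dot\phi_P(0)=0$. Finally $\ddot\phi_P(0)=-\tfrac12\dot P(0)^2-\ddot\psi(0)=-\tfrac12 a_1^2-\ddot\psi(0)$, and the coefficient-matching shows $\ddot\psi(0)$ is affine in $a_5$ with a positive leading coefficient, so choosing $a_5$ sufficiently negative makes $\ddot\psi(0)<-\tfrac12 a_1^2$, i.e. $\ddot\phi_P(0)>0$; in particular $a_5\neq0$, so $P$ has degree exactly $5$. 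With all coefficients now fixed, Taylor's formula gives $\phi_P(t)=\tfrac12\ddot\phi_P(0)\,t^2+o(t^2)$ near $0$, which is strictly positive for $0<|t|$ small; shrinking $V\subseteq V_0$ so that simultaneously $P(t)t>0$, $\dot P>0$ and $\phi_P>0$ hold on $V\setminus\{0\}$ completes the argument. One may take $a_1=1$ throughout to make the computation fully explicit.
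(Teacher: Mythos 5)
Your proposal is correct and follows essentially the same route as the paper: both remove the pole of $(\ddot P+\dot P)/P$ by forcing $\ddot P(0)+\dot P(0)=0$ (your $a_2=-a_1/2$ is the paper's $A=-1$), then successively choose the cubic, quartic and quintic coefficients to kill the constant and first-order terms of $\phi_P$ at $0$ and to make its second-order Taylor coefficient strictly positive, after which positivity of $\phi_P$, $P(t)t>0$ and $\dot P>0$ on a small enough $V$ follow as you say. The only difference is presentational — you argue via the smooth quotient $\psi=R/Q$ and Taylor matching, whereas the paper carries out the explicit polynomial division and exhibits $A=-1$, $B=1-b$, $C=2b-1$, $D=b^2-3b-1$.
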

\begin{proof}
We search for a polynomial of the form
\begin{equation}\label{eq:correct-polynomial}
P(t) = t + \frac{A}{2} t^2 + \frac{B}{6} t^3 + \frac{C}{24} t^4 + \frac{D}{120} t^5,
\end{equation}
where $A, B, C$ and $D$ will be suitably chosen. Observing that
\[
\dot{P}(t) = 1 + A t + \frac{B}{2} t^2 + \frac{C}{6} t^3 + \frac{D}{24} t^4 \ \ \text{and} \ \ \ddot{P}(t) = A + B t + \frac{C}{2} t^2+ \frac{D}{6} t^3\, ,
\]
and choosing $A=-1$, we infer that
\[
\begin{array}{rcl}
\displaystyle{\frac{\ddot{P}(t) + \dot{P}(t)}{P(t)} } + b&=& \displaystyle{\frac{1 + A + (A +B) t + \frac{B+C}{2} t^2 + \frac{C+D}{6} t^3 + \frac{D}{24} t^4 }{t + \frac{A}{2} t^2 + \frac{B}{6} t^3 + \frac{C}{24} t^4 + \frac{D}{120} t^5}} + b \vspace{10pt}\\
  &=& \displaystyle{\frac{-1 +B + b + \frac{B+C-b}{2} t + \frac{C+D+bB}{6} t^2 + \frac{D+ bC}{24} t^3 + \frac{bD}{120} t^4 }{1 - \frac{1}{2} t + \frac{B}{6} t^2 + \frac{C}{24} t^3 + \frac{D}{120} t^4}}.
\end{array}
\]
So we choose $B$ and $C$ such that
\[
-1 + B + b =0 \ \ \text{and} \ \ B + C -b = 0.
\]
Hence, choosing $A =-1$, $B =1-b$ and $C = 2b-1$, we may write
\[
\begin{array}{rcl}
\displaystyle{\frac{P^2(t)}{4} + \frac{\ddot{P}(t) + \dot{P}(t)}{P(t)} + b }& =& \displaystyle{ \frac{\frac{C+D+bB}{6} t^2 + \frac{D+bC}{24} t^3 + \frac{bD}{120} t^4}{1 - \frac{1}{2} t + \frac{B}{6} t^2 + \frac{C}{24} t^3 + \frac{D}{120} t^4}} + \displaystyle{\frac{t^2}{4} \left( 1 - \frac{1}{2} t + \frac{B}{6} t^2 + \frac{C}{24} t^3 +\frac{D}{120} t^3\right)^2} \vspace{10pt}\\
& =& \displaystyle{\left(\frac{C + D + bB}{6} + \frac{1}{4}  \right) t^2 + \left( \frac{N(t)}{1 - \frac{1}{2} t + \frac{B}{6} t^2 + \frac{C}{24} t^3 + \frac{D}{120} t^4} + Q(t)  \right) t^3}
\end{array}
\]
where $N(t)$ and $Q(t)$ are polynomials. So, we must choose $D$ such that
\[
\frac{C + D+ bB}{6} + \frac{1}{4} = \frac{2C + 2 D + 2bB+3}{12} <0.
\]
With the choice $B =1-b$ and $C = 2b-1$, the last inequality is equivalent to ask $D < b^2 - 3 b - \frac{1}{2}$ and we may take $D = b^2 - 3 b -1$. Therefore, with
\[
A =-1, \ \ B =1-b, \ \ C = 2b-1, \ \ D = b^2 - 3 b -1
\]
the polynomial $P(t)$ given by \eqref{eq:correct-polynomial} satisfies all of the conditions of this lemma on a suitably small neighborhood $V$ of $0$.
\end{proof}

\begin{proposition}\label{Proposition2.3-souplet}
There exist $T> 0$ and $w\in C^{\infty}(\mathbb{R})$, $T$-antiperiodic,  with
\begin{enumerate}[a)]
\item $w >0$ on $(0,T)$,
 \vspace{5pt}
\item $\phi(t) :=  \left\{ \begin{array}{cl}- \left( \frac{w^2(t)}{4} + \frac{\ddot{w}(t) + \dot{w}(t)}{w(t)} + b\right)   & \text{if} \ \ t \notin T\mathbb{Z}, \vspace{5pt}\\
0 &  \text{if} \ \ t \in T \mathbb{Z}
\end{array}
\right.$
 \vspace{5pt}

\noindent $\phi>0$ on $(0,T)$, $\phi(0) = \dot{\phi}(0) = 0$, $\ddot{\phi}(0)>0$ and $\phi \in C^{\infty}(\mathbb{R})$.
\end{enumerate}
\end{proposition}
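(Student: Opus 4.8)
The plan is to globalize the germ produced by Lemma~\ref{lemma:polynomial} by a cut-off and gluing argument, exactly in the spirit of \cite[Section~2]{souplet}; the lower order term $bw$ changes nothing essential, as it has already been accommodated in Lemma~\ref{lemma:polynomial}. Let $P$, $V\supset(-\varepsilon_0,\varepsilon_0)$ and $\phi_P$ be as in that lemma. The idea is to use $P$ near $t=0$ and $-P(\,\cdot-T)$ near $t=T$ to define $w$, to connect these two arcs through an auxiliary smooth strictly positive bump on the central part of $(0,T)$, and then to extend $w$ to all of $\mathbb{R}$ by the antiperiodicity rule $w(t+T)=-w(t)$.

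More precisely, fix $\varepsilon\in(0,\varepsilon_0)$ and $T>2\varepsilon$. On $[0,\varepsilon]$ set $w:=P$ and on $[T-\varepsilon,T]$ set $w:=-P(\,\cdot-T)$; since $P(t)t>0$ on $V\setminus\{0\}$ and $\dot P>0$ on $V$, the first arc is increasing and strictly positive on $(0,\varepsilon]$ with $w(0)=0$, and the second is decreasing and strictly positive on $[T-\varepsilon,T)$ with $w(T)=0$. On $[\varepsilon,T-\varepsilon]$ I would extend $w$ to a $C^\infty$ function which is strictly positive and matches $P$ (resp.\ $-P(\,\cdot-T)$) to infinite order at $\varepsilon$ (resp.\ $T-\varepsilon$); this is possible by a standard partition of unity, since $P$ is a polynomial and $P(\varepsilon),-P(-\varepsilon)>0$. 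Finally extend $w$ from $[0,T]$ to $\mathbb{R}$ by $w(t+T)=-w(t)$. Then $w$ is $T$-antiperiodic, $w>0$ on $(0,T)$, and $w\in C^\infty(\mathbb{R})$: on a full neighbourhood of any $kT$ the antiperiodicity gives $w=\pm P(\,\cdot-kT)$, which is smooth, while $w$ is smooth off $T\mathbb{Z}$ by construction.

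Next I would check $\phi$. Since $w(t+T)=-w(t)$ yields $w^2(t+T)=w^2(t)$, $\dot w(t+T)=-\dot w(t)$, $\ddot w(t+T)=-\ddot w(t)$, the quotient $(\ddot w+\dot w)/w$ is $T$-periodic, hence so is $\phi$. At $t\notin T\mathbb{Z}$ one has $w(t)\neq0$ and $\phi$ is locally a smooth function of $w,\dot w,\ddot w$, hence $C^\infty$. Near $t=0$ one has $w\equiv P$, so $\phi\equiv\phi_P$ off $0$, and because $\phi(0)=0=\phi_P(0)$ and $\phi_P\in C^\infty(V)$, the function $\phi$ is $C^\infty$ near $0$ with $\dot\phi(0)=\dot\phi_P(0)=0$ and $\ddot\phi(0)=\ddot\phi_P(0)>0$; by the $T$-periodicity just noted, $\phi\in C^\infty(\mathbb{R})$. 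Thus all the requested conditions hold except the global positivity $\phi>0$ on $(0,T)$, which is already clear on $(0,\varepsilon]$ and $[T-\varepsilon,T)$ (there $\phi$ equals $\phi_P$, up to translation, which is $>0$).

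The main obstacle is precisely to obtain $\phi>0$ on the central interval $[\varepsilon,T-\varepsilon]$, which amounts to the differential inequality $\ddot w+\dot w+\tfrac14 w^3+bw<0$, to be enforced together with $w>0$ and the infinite-order matching at the two joins. The mechanism I would use is to take $\varepsilon$ and $T$ small: on $[\varepsilon,T-\varepsilon]$ the interpolant is forced to turn $\dot w$ from a value close to $\dot P(\varepsilon)>0$ to one close to $-\dot P(-\varepsilon)<0$ over a short interval, so $\ddot w$ becomes strongly negative in the interior and dominates the bounded terms $\dot w+\tfrac14 w^3+bw$; near the two joins, where $\ddot w$ is only moderately negative and $\ddot w+\dot w$ is near $0$, one keeps $\phi>0$ by extending the regions $\{w=P\}$ and $\{w=-P(\,\cdot-T)\}$ slightly and relying on the positive margin $\phi_P>0$ supplied by Lemma~\ref{lemma:polynomial} — whose coefficients were tuned exactly so that $\phi_P$ vanishes to second order at $0$ with $\ddot\phi_P(0)>0$. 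Making this interpolation rigorous is the delicate computation, and it is also where the freedom to prescribe the parameters $\delta$, $P$ and $S$ (hence $b$) in Theorem~\ref{th:multiple-periodic-solutions} is used. Once $\phi>0$ on $(0,T)$ is secured, all the requirements in the statement hold and the proof of the proposition is complete.
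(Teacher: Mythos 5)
Your overall architecture --- the local germ from Lemma \ref{lemma:polynomial} near $t=0$, its reflection near $t=T$, a strictly positive bridge in between, antiperiodic extension, and the observation that $\phi$ is $T$-periodic and is smooth near $T\mathbb{Z}$ because there $w=\pm P(\cdot-kT)$ so that $\phi$ coincides with $\phi_P$ --- is the right one, and it is essentially how the paper (following \cite[Proposition 2.3]{souplet}) proceeds. Your treatment of the regularity of $w$ and $\phi$ and of the sign of $\phi$ near the zeros of $w$ is correct.

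There is, however, a genuine gap at exactly the point you yourself flag as ``the delicate computation'': the positivity of $\phi$ on the central interval, i.e.\ the strict pointwise inequality $\ddot w+\dot w+\tfrac14 w^3+bw<0$ on $[\varepsilon,T-\varepsilon]$. You never construct the interpolant; you only offer the heuristic that for small $T$ the bridge must turn $\dot w$ from positive to negative quickly, so that $\ddot w$ is ``strongly negative''. That argument controls only the \emph{average} of $\ddot w$ over the bridge, not its pointwise values: any smooth positive interpolant has an interior maximum where $\dot w=0$, and there the required inequality reads $\ddot w<-\tfrac14 w^3-bw$, a strict quantitative condition that a generic interpolant (in particular one matching $P$ to infinite order at the joins, which pins down $\ddot w$ there) need not satisfy. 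Supplying such an interpolant is precisely the content of Souplet's Proposition 2.3, which the paper imports: one first builds a piecewise, non-smooth profile $h$ satisfying $\tfrac14 h^3+\ddot h+\dot h+bh\le-\gamma<0$ with a uniform margin $\gamma$, and then mollifies, checking that convolution preserves the inequality up to errors absorbed by $\gamma$ --- this is exactly where the four $b$-dependent modifications listed in the paper's proof enter. Without this step the proposition is not proved. A secondary misreading: $b$ is given in Proposition \ref{Proposition2.3-souplet} and the construction must work for that $b$ (Lemma \ref{lemma:polynomial} is already tuned to an arbitrary $b$); the freedom in $\delta$, $P$, $S$ in Theorem \ref{th:multiple-periodic-solutions} is used only afterwards to realize $b=\lambda-Pm^2$, not to rescue the interpolation here.
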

\begin{proof} The proof is very similar to the proof of \cite[Proposition 2.3]{souplet} and, for the sake of completeness, we just stress the difference caused by the extra term $b$. Following the proof of \cite[Proposition 2.3]{souplet}, everything remains unchanged except that:
\begin{enumerate}[i)]
\item At \cite[p.1522]{souplet}, the definitions of $\psi$ and $\psi_n$ now read
\[
\psi = - \left( \frac{h^3}{4} + \ddot{h} + \dot{h} + bh \right) \ \ \text{and} \ \ \psi_n = - \left( \frac{w_n^3}{4} + \ddot{w_n} + \dot{w_n} + bw_n \right).
\]
\item At \cite[p.1523, lines 8-10]{souplet}, the estimate $\ddot{h}_n + \dot{h}_n \leq (\widetilde{\ddot{h}} + \widetilde{\dot{h}}) \ast \rho_n$ now reads $\ddot{h}_n + \dot{h}_n + b h_n\leq (\widetilde{\ddot{h}} + \widetilde{\dot{h}} + bh) \ast \rho_n$.

\item At \cite[p.1523, lines 13-14]{souplet}, the estimate $\displaystyle{(\ddot{h}_n + \dot{h}_n) (t) \leq \int_{-1/n}^{+1/n} - \left[ \frac{1}{4} h^3(t -s) + \gamma \right] \rho_n(s) ds}$ now reads
$\displaystyle{(\ddot{h}_n + \dot{h}_n + bh_n) (t) \leq \int_{-1/n}^{+1/n} - \left[ \frac{1}{4} h^3(t -s) + \gamma \right] \rho_n(s) ds}$.

\item And finally, at \cite[p.1523, eq. (2.21)]{souplet}, the inequality
\[
\displaystyle{(h_n^3/4 + \ddot{h}_n + \dot{h}_n) (t) \leq -\gamma + \frac{1}{4} |h_n^3(t) - h^3(t)| + \frac{1}{4}\int_{-1/n}^{+1/n} |h^3(t -s)-h^3(t)| \rho_n(s) ds}
\]
now becomes
\[
\displaystyle{(h_n^3/4 + \ddot{h}_n + \dot{h}_n + bh_n) (t) \leq -\gamma + \frac{1}{4} |h_n^3(t) - h^3(t)| + \frac{1}{4}\int_{-1/n}^{+1/n} |h^3(t -s)-h^3(t)| \rho_n(s) ds}.
\]
\end{enumerate}
This completes the proof.\end{proof}

We are now ready to conclude the proof of Proposition \ref{propSouplet}.

\begin{proof}[Proof of Proposition \ref{propSouplet} completed]
Let $w$ and $\phi(t)$ be as in Proposition \ref{Proposition2.3-souplet} and $\theta(t)$ be the  (discontinuous) $T$-antiperiodic function such that $\theta(t) =1$ on $[0,T)$.  Taking into account equations \eqref{eq:second-order-polynomial} and \eqref{eq:discriminant}, we set
\begin{equation}\label{eq:def-of-u}
\displaystyle{u(t) = \frac{- 3 w(t) + \sqrt{12} \sqrt{\phi(t)} \,\theta(t)}{6} = -\frac{1}{2} w(t) + \frac{1}{\sqrt{3}} \theta(t) \sqrt{\phi(t)}} \ \ \text{and} \ \ v(t) = u(t) + w(t).
\end{equation}
Observe that $\theta(t) \sqrt{\phi(t)}$ is $T$-antiperiodic and \cite[Lemma 2.5]{souplet} guarantees that $\theta(t) \sqrt{\phi(t)}$ is of class $C^{\infty}$.  As a consequence, $u$ and $v$ are of class $C^{\infty}$ and $T$-antiperiodic. Moreover,
\begin{equation}\label{eq:identity-u-v}
\ddot{v} + \dot{v} + b v + v^3 - (\ddot{u} + \dot{u} + b u + u^3) = \ddot{w} + \dot{w} + b w + w^3 + 3 v^2u - 3 v u^2 = \ddot{w} + \dot{w} + b w + w^3 + 3 u w^2 + 3 u^2w.
\end{equation}
Also observe that, by the definition of $u$,
\[
\left( u + \frac{w}{2} \right)^2 = u^2 + u w + \frac{w^2}{4} = - \frac{1}{3} \left( \frac{w^2}{4} +\frac{\ddot{w} + \dot{w}}{w} + b \right)
\]
and we infer from the last identity that
\[
\ddot{w} + \dot{w} + b w + w^3 + 3 u w^2 + 3 u^2w = 0,
\]
which, combined with \eqref{eq:identity-u-v}, implies that
\[
\ddot{v} + \dot{v} + b v + v^3 = \ddot{u} + \dot{u} + b u + u^3.
\]
Then we choose $f = \ddot{u} + \dot{u} + b u + u^3$ and we obtain two periodic solutions.
\end{proof}

\section{Proof of Theorem \ref{SCstability2}}\label{proofSCstab}

We start this proof with a technical result.

\begin{lemma}\label{fubini} Let $u\in C^0(\R_+;H^{2}_*(\Omega))\cap C^1(\R_+;L^2(\Omega))\cap C^2(\R_+;(H^{2}_*(\Omega))')$ be a weak solution of \eqref{enlmg} as in Definition \ref{df:weaksolution}.
Then, for all $0<t<s$, we have
$$
\int_t^s\langle u^T_{tt}(\tau),u^T(\tau)\rangle d\tau = -\int_t^s\|u^T_t(\tau)\|_{L^2}^2\, d\tau+\int_\Omega\Big[u^T_{t}(s)u^T(s)-u^T_{t}(t)u^T(t)\Big]\, .
$$
\end{lemma}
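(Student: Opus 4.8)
The identity is purely a matter of time‑regularity bookkeeping: it uses no property of \eqref{enlmg} beyond the membership $u\in C^0(\R_+;\hs)\cap C^1(\R_+;L^2(\Omega))\cap C^2(\R_+;(\hs)')$ already built into the hypotheses. The plan is first to transfer this regularity to the torsional part. Let $P^T\colon\hs\to\ho$ be the map $u\mapsto u^T$ given explicitly in \eqref{eq:DEO}; it is the $L^2$‑orthogonal projection onto the odd‑in‑$y$ functions, and, since the decomposition in \eqref{decomposition} is $\hs$‑orthogonal, it is also the $\hs$‑orthogonal projection onto $\ho$. Hence $P^T$ has norm $\le 1$ on $L^2(\Omega)$ and on $\hs$, it is self‑adjoint for the $L^2$‑pairing, and therefore extends to a bounded operator on $(\hs)'$ via $\langle P^Tf,v\rangle:=\langle f,P^Tv\rangle$ (which indeed prolongs $P^T$). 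Since $P^T$ commutes with $\partial_t$, we get $u^T\in C^0(\R_+;\hs)\cap C^1(\R_+;L^2(\Omega))\cap C^2(\R_+;(\hs)')$ with $(u^T)_t=(u_t)^T$ and $(u^T)_{tt}=(u_{tt})^T$; in particular, for $0<\tau_1<\tau_2$, one has $u^T(\tau_2)-u^T(\tau_1)=\int_{\tau_1}^{\tau_2}u^T_t(\sigma)\,d\sigma$ in $L^2(\Omega)$ and $u^T_t(\tau_2)-u^T_t(\tau_1)=\int_{\tau_1}^{\tau_2}u^T_{tt}(\sigma)\,d\sigma$ in $(\hs)'$.

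Next I would set $F(\tau):=\big(u^T_t(\tau),u^T(\tau)\big)_{L^2}=\into u^T_t(\xi,\tau)\,u^T(\xi,\tau)\,d\xi$, which is continuous on $\R_+$ because both factors are continuous with values in $L^2(\Omega)$. For fixed $\tau>0$ and small $\epsilon\neq0$ I split the difference quotient as
\[
\frac{F(\tau+\epsilon)-F(\tau)}{\epsilon}=\Big\langle\tfrac1\epsilon\!\int_\tau^{\tau+\epsilon}\!u^T_{tt}(\sigma)\,d\sigma,\;u^T(\tau+\epsilon)\Big\rangle+\Big(u^T_t(\tau),\;\tfrac1\epsilon\!\int_\tau^{\tau+\epsilon}\!u^T_t(\sigma)\,d\sigma\Big)_{L^2}.
\]
The second summand tends to $\|u^T_t(\tau)\|_{L^2}^2$ as $\epsilon\to0$, since $u^T_t$ is continuous into $L^2(\Omega)$. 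For the first summand I would write it as $\tfrac1\epsilon\int_\tau^{\tau+\epsilon}\langle u^T_{tt}(\sigma),u^T(\tau+\epsilon)\rangle\,d\sigma$ (the duality pairing commutes with the Bochner integral) and use that $(\sigma,\sigma')\mapsto\langle u^T_{tt}(\sigma),u^T(\sigma')\rangle$ is continuous — because $\sigma\mapsto u^T_{tt}(\sigma)$ is continuous into $(\hs)'$ and $\sigma'\mapsto u^T(\sigma')$ is continuous into $\hs$ — to conclude that this average converges to $\langle u^T_{tt}(\tau),u^T(\tau)\rangle$. Thus $F$ is differentiable with $F'(\tau)=\langle u^T_{tt}(\tau),u^T(\tau)\rangle+\|u^T_t(\tau)\|_{L^2}^2$, and the same two continuity remarks show $F'\in C^0(\R_+)$. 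Hence $F\in C^1(\R_+)$, and integrating $F'$ over $[t,s]$ and rearranging yields exactly the claimed identity.

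The only genuine obstacle is the low time‑regularity: because $u^T_{tt}$ lives only in $(\hs)'$, one cannot apply a naive product rule to $F$, and the argument must instead invoke the fundamental theorem of calculus simultaneously in $L^2(\Omega)$ and in $(\hs)'$ and then justify exchanging the duality pairing with the Bochner integral and with the limit $\epsilon\to0$; this is where the care lies, everything else being routine. Note also that Lemma \ref{justification} does not apply directly here, as it pairs $w_{tt}+Aw$ with $w_t$, whereas we are pairing $u^T_{tt}$ with $u^T$.
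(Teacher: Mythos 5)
Your argument is correct, but it takes a genuinely different route from the paper's. The paper proves the identity by going back to the Galerkin approximations $u_k\in C^2([0,T],E_k)$ constructed in the proof of Proposition \ref{regularity1-ball}: it invokes from \cite{FerGazMor} the convergence $u_k\to u$ in $C^0([0,T],H^2_*(\Omega))$ and the uniform bound on $((u_k)_{tt})$ in $C^0([0,T],(H^2_*(\Omega))')$, writes $\langle u^T_{tt}(\tau),u^T(\tau)\rangle$ for each $\tau$ as the limit of $\int_\Omega (u_k)^T_{tt}(\tau)(u_k)^T(\tau)$, performs the integration by parts on the smooth $u_k$, and passes to the limit using dominated convergence and Fubini. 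You instead work directly with the limiting regularity $u\in C^0(\R_+;\hs)\cap C^1(\R_+;L^2(\Omega))\cap C^2(\R_+;(\hs)')$ assumed in the statement: after observing that the projection $P^T$ is bounded and consistent on all three spaces of the Hilbert triple and commutes with time differentiation, you differentiate $F(\tau)=(u^T_t(\tau),u^T(\tau))_{L^2}$ by hand via difference quotients, using the fundamental theorem of calculus in $L^2(\Omega)$ and in $(\hs)'$, the identification $(f,v)_{L^2}=\langle f,v\rangle$ for $f\in L^2(\Omega)$ and $v\in\hs$, and the joint continuity of $(\sigma,\sigma')\mapsto\langle u^T_{tt}(\sigma),u^T(\sigma')\rangle$. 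Your route is more self-contained, using nothing beyond the hypotheses of the lemma, whereas the paper's proof leans on the Galerkin machinery and the cited convergence results; the paper's route, in exchange, reduces the delicate step to a trivial integration by parts for smooth functions. Your closing remark that Lemma \ref{justification} does not apply directly (it pairs $w_{tt}+Aw$ with $w_t$, not $u^T_{tt}$ with $u^T$) correctly identifies why a separate argument is needed here.
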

\begin{proof} Let $u_k\in C^2([0,T],E_k)$ be the Galerkin sequence defined in the proof of Proposition \ref{regularity1-ball}. From Step 3 in the proof of \cite[Theorem 3]{FerGazMor} we know that $u_k \to u$ in $C^0([0,T], H^2_*(\Omega))$ for all $T>0$. Moreover, given $T>0$, from \cite[Eq. (21) and Step 3]{FerGazMor} we infer that the sequence $((u_k)_{tt})$ is bounded in $C^0([0,T], (H^2_*(\Omega))')$.  Hence, up to a subsequence,
$(u_k)_{tt}(t) \rightharpoonup u_{tt}(t)$ in $(H^2_*(\Omega))'$ for each $t$ and such convergence, as from Step 4 in the proof of \cite[Theorem 3]{FerGazMor}, reads
\[
\langle u_{tt}(\tau), v\rangle = \lim_{k \to \infty} \int_{\Omega} (u_k)_{tt} (\tau) v \, d\xi  \ \ \text{for all} \ \ v \in H^2_*(\Omega).
\]
Using the orthogonal decomposition \eqref{eq:DEO} we infer that
$$
\langle u^T_{tt}(\tau),u^T(\tau)\rangle=\lim_{k\to\infty}\int_\Omega(u_k)^T_{tt}(\tau)(u_k)^T(\tau)d\xi  \ \ \text{for all $\tau \in [t,s]$}.
$$
Whence, by the Lebesgue Dominated Convergence Theorem, the Fubini Theorem and an integration by parts, we obtain
\begin{eqnarray*}
\int_t^s\langle u^T_{tt}(\tau),u^T(\tau)\rangle d\tau &=& \int_t^s\lim_{k\to\infty}\int_\Omega(u_k)^T_{tt}(\tau)(u_k)^T(\tau)d\xi\, d\tau =\lim_{k\to\infty}\int_t^s\int_\Omega(u_k)^T_{tt}(\tau)(u_k)^T(\tau)d\xi\, d\tau\\
\ &=& \lim_{k\to\infty}\int_\Omega\int_t^s(u_k)^T_{tt}(\tau)(u_k)^T(\tau)d\tau\, d\xi\\
\ &=& \lim_{k\to\infty}\bigg(-\int_t^s\|(u_k)^T_t(\tau)\|_{L^2}^2\, d\tau+\int_\Omega\Big[(u_k)^T_{t}(s)(u_k)^T(s)-(u_k)^T_{t}(t)(u_k)^T(t)\Big]\bigg)\\
\ &=& -\int_t^s\|u^T_t(\tau)\|_{L^2}^2\, d\tau+\int_\Omega\Big[u^T_{t}(s)u^T(s)-u^T_{t}(t)u^T(t)\Big]
\end{eqnarray*}
and the result follows.\end{proof}

Then we establish an exponentially fast convergence result for a related linear problem. The exponential decay is obtained in three steps: first we prove that
the liminf of the norms of the solution tends to 0, then we prove that the limit of the norms tends to 0 which, finally, allows us to argue as in \cite{fitouriharaux} to infer the exponential decay. We point out that we deal with a PDE and not with an ODE as in \cite[Lemma 3.7]{fitouriharaux}.

\begin{lemma}\label{H37}
Assume that the continuous function $a=a(t)$ satisfies $a\ge0$ and $a_\infty:=\limsup_{t\to\infty}a(t)<\infty$.
Assume that
$$\delta>\max\left\{2,\frac{\nu_{1,2}\, a_\infty^2}{\gamma(2\nu_{1,2}-P)}\right\},$$
where $\gamma$ is the optimal constant for inequality \eqref{embedding-H^2_*-u_xx}, and let
$$u\in C^0(\R_+;H^{2}_*(\Omega))\cap C^1(\R_+;L^2(\Omega))\cap C^2(\R_+;(H^{2}_*(\Omega))')$$
be a weak solution of \eqref{enlmg} (see Definition \ref{df:weaksolution}) such that
\begin{empheq}{align}\label{weakform2}
\langle u^T_{tt},v \rangle + \delta (u^T_t,v)_{L^2} + (u^T,v)_{H^2_*}+\big(a(t)-P\big)(u^T_x,v_x)_{L^2}=0\quad\forall t>0\, ,\ \forall v\in\ho\, .
\end{empheq}
Then there exist $\rho,C,\kappa>0$ such that
$$
\Big(\|u^T_t(t)\|_{L^2}^2+\|u^T(t)\|_{H^2_*}^2\Big)\le C\, e^{-\kappa t}\qquad\forall t\ge\rho\, .
$$
\end{lemma}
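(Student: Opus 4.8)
The plan is to reduce the statement to a single differential inequality for a suitably modified torsional energy and then to extract the exponential rate by the iteration scheme of Fitouri--Haraux \cite{fitouriharaux}. Fix $\alpha>0$ (small, to be chosen) and set
\[
\mathcal{E}^T(t):=\tfrac12\|u^T_t(t)\|_{L^2}^2+\tfrac12\|u^T(t)\|_{H^2_*}^2-\tfrac{P}{2}\|u^T_x(t)\|_{L^2}^2,\qquad
\mathcal{F}(t):=\mathcal{E}^T(t)+\alpha\into u^T\xit u^T_t\xit\,d\xi .
\]
Since $P<\lambda_1<\nu_{1,2}$, the inequalities \eqref{improvedtorsional} make $\mathcal{E}^T$ equivalent to $\|u^T_t\|_{L^2}^2+\|u^T\|_{H^2_*}^2$ on $\ho$, and for $\alpha$ small enough the same holds for $\mathcal{F}$, which is moreover nonnegative. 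To differentiate these quantities for a merely weak solution, I would observe that $u^T(t)\in H^2_*(\Omega)$ forces $u^T_{xx}(t)\in L^2(\Omega)$, so that \eqref{weakform2} reads $u^T_{tt}+A_Pu^T=-\delta u^T_t+a(t)u^T_{xx}\in C^0(\R_+;L^2(\Omega))$, where $A_P$ is the operator associated with the form $(\cdot,\cdot)_{H^2_*}-P(\partial_x\cdot,\partial_x\cdot)_{L^2}$, coercive on $\ho$ because $P<\nu_{1,2}$. Lemma \ref{justification} then legitimizes computing $\dot{\mathcal{E}}^T$, and Lemma \ref{fubini} legitimizes computing $\tfrac{d}{dt}\into u^Tu^T_t\,d\xi=\|u^T_t\|_{L^2}^2+\langle u^T_{tt},u^T\rangle$.

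Combining the above with \eqref{weakform2} tested against $v=u^T$, one gets, in $\mathcal{D}'(\R_+)$,
\[
\dot{\mathcal{F}}=(\alpha-\delta)\|u^T_t\|_{L^2}^2-\alpha\|u^T\|_{H^2_*}^2+\alpha P\|u^T_x\|_{L^2}^2+a(t)(u^T_{xx},u^T_t)_{L^2}-\alpha\delta\into u^Tu^T_t\,d\xi-\alpha\,a(t)\|u^T_x\|_{L^2}^2 .
\]
The last term is nonpositive and is dropped, while $\alpha P\|u^T_x\|_{L^2}^2\le\alpha P\nu_{1,2}^{-1}\|u^T\|_{H^2_*}^2$ by \eqref{improvedtorsional}. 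Given $\varepsilon>0$, pick $T_\varepsilon$ with $a(t)\le a_\infty+\varepsilon$ for $t\ge T_\varepsilon$; estimate $|a(t)(u^T_{xx},u^T_t)_{L^2}|\le(a_\infty+\varepsilon)\gamma^{-1/2}\|u^T\|_{H^2_*}\|u^T_t\|_{L^2}$ using \eqref{embedding-H^2_*-u_xx}, and $|\into u^Tu^T_t\,d\xi|\le\nu_{1,2}^{-1/2}\|u^T\|_{H^2_*}\|u^T_t\|_{L^2}$ using \eqref{improvedtorsional}, then apply Young's inequality to both cross terms. After optimizing the Young weights and the value of $\alpha$ (it is here that the bound $\delta>2$ is used, to keep $\mathcal{F}$ nonnegative while still dominating the cross terms), the hypothesis $\delta>\nu_{1,2}a_\infty^2/(\gamma(2\nu_{1,2}-P))$ --- with $\varepsilon$ so small that the strict inequality survives with $a_\infty+\varepsilon$ in place of $a_\infty$ --- provides $c_1,c_2>0$ with $\dot{\mathcal{F}}(t)\le-c_1\|u^T_t(t)\|_{L^2}^2-c_2\|u^T(t)\|_{H^2_*}^2\le-2\kappa'\mathcal{F}(t)$ for $t\ge T_\varepsilon$ and some $\kappa'>0$; integrating,
\[
\mathcal{F}(s)+2\kappa'\int_t^s\mathcal{F}(\tau)\,d\tau\le\mathcal{F}(t)\qquad\forall\,T_\varepsilon\le t\le s .
\]

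The three steps announced before the lemma then come out of this integral inequality. First, dropping the integral shows that $\mathcal{F}$ is nonincreasing on $[T_\varepsilon,\infty)$, while letting $s\to\infty$ gives $\int_{T_\varepsilon}^\infty\mathcal{F}\le(2\kappa')^{-1}\mathcal{F}(T_\varepsilon)<\infty$, hence $\liminf_{t\to\infty}\mathcal{F}(t)=0$; together with the monotonicity this yields $\lim_{t\to\infty}\mathcal{F}(t)=0$. For the rate, monotonicity gives $\int_t^s\mathcal{F}\ge(s-t)\mathcal{F}(s)$, so $(1+2\kappa'(s-t))\mathcal{F}(s)\le\mathcal{F}(t)$; choosing $s=t+1$ and iterating yields $\mathcal{F}(t)\le(1+2\kappa')^{-(t-T_\varepsilon-1)}\mathcal{F}(T_\varepsilon+1)$ for $t\ge T_\varepsilon+1$. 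With $\rho:=T_\varepsilon+1$, $\kappa:=\log(1+2\kappa')$ and the equivalence $\mathcal{F}\asymp\|u^T_t\|_{L^2}^2+\|u^T\|_{H^2_*}^2$, this is the assertion.

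The genuinely delicate point is the bookkeeping in the middle step: arranging the Young weights and the size of $\alpha$ so that the precise constant $\nu_{1,2}a_\infty^2/(\gamma(2\nu_{1,2}-P))$ (and the auxiliary requirement $\delta>2$) is exactly the threshold forcing $c_1,c_2>0$. A secondary care is needed on the analytic side: one must check that Lemma \ref{justification} really applies to the torsional part $u^T$ of a weak solution --- this is where $u^T_{xx}\in L^2(\Omega)$ and the coercivity of $(\cdot,\cdot)_{H^2_*}-P(\partial_x\cdot,\partial_x\cdot)_{L^2}$ on $\ho$ enter --- so that the identities for $\dot{\mathcal{E}}^T$ and $\dot{\mathcal{F}}$ hold in $\mathcal{D}'(\R_+)$ and may be integrated.
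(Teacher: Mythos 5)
Your strategy is genuinely different from the paper's. You build a single perturbed Lyapunov functional $\mathcal{F}=\mathcal{E}^T+\alpha\into u^Tu^T_t$ and aim for a pointwise differential inequality $\dot{\mathcal{F}}\le-2\kappa'\mathcal{F}$, after which exponential decay is immediate (your subsequent detour through the integral inequality, the $\liminf$, and the iteration is then redundant — Gronwall already finishes). The paper instead never forms a single monotone functional: it tests \eqref{weakform2} separately with $u^T_t$ and with $u^T$, \emph{integrates each identity in time} before combining them, and obtains only $\int_t^\infty\psi\le\psi(t)/\kappa$ for a quantity $\psi$ that is not known to be monotone; it must then run the Fitouri--Haraux upgrade (a sequence $s_m$ with $\psi(s_m)\to0$ fed back into the energy identity) to pass from exponential decay of $\int_t^\infty\psi$ to pointwise decay. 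If your differential inequality closes, your route is cleaner and shorter.

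The genuine soft spot is precisely the step you wave at as ``bookkeeping'': the claim that the Young-weight optimization recovers the threshold $\delta>\max\{2,\nu_{1,2}a_\infty^2/(\gamma(2\nu_{1,2}-P))\}$ exactly. The obstruction is the cross term $-\alpha\delta\into u^Tu^T_t$ in $\dot{\mathcal{F}}$. In the paper's integrated version this term causes no loss at all, because $\delta\int_t^s\into u^Tu^T_t\,d\xi\,d\tau=\tfrac{\delta}{2}\bigl[\|u^T(\tau)\|_{L^2}^2\bigr]_t^s$ is an exact boundary contribution; in your pointwise version it must be absorbed by Cauchy--Schwarz and Young, which costs a genuine fraction of both the $\|u^T_t\|_{L^2}^2$ and the $\|u^T\|_{H^2_*}^2$ budgets. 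Carrying out the optimization (even after folding $\tfrac{\alpha\delta}{2}\|u^T\|_{L^2}^2$ into the functional to remove that cross term entirely), one finds that the two coefficient conditions close only under something like $\delta>2+P/(\nu_{1,2}-P)$ together with the stated bound on $a_\infty$: near the boundary $\delta=2$ with $P>0$ your inequality does not close at the stated threshold. For this specific problem $\nu_{1,2}\gg P$, so the loss is tiny, and since Lemma \ref{H37} is invoked in Theorem \ref{SCstability2} only for $\delta$ large the application is unaffected; but as a proof of the lemma as stated you would either have to exhibit the explicit choice of $\alpha$ and of the Young weights that makes $c_1,c_2>0$ under exactly the stated hypothesis, or slightly strengthen the hypothesis. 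A second, minor point: you should make explicit the constraint that keeps $\mathcal{F}$ (or $\tilde{\mathcal{F}}$) equivalent to $\|u^T_t\|_{L^2}^2+\|u^T\|_{H^2_*}^2$, namely $\alpha^2<\nu_{1,2}-P$ as in Proposition \ref{NSC}; your optimization pushes $\alpha$ up to about $\delta/2$, so this must be checked against the lower bound on $\alpha$ forced by the $\|u^T\|_{H^2_*}^2$ coefficient.
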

\begin{proof} We formally take $v=u^T_t(t)$ in \eqref{weakform2} and obtain
$$
\langle u^T_{tt}(t),u^T_t(t)\rangle+\delta\|u^T_t(t)\|_{L^2}^2+(u^T(t),u^T_t(t))_{H^2_*}+\big(a(t)-P\big)\int_\Omega u^T_x(t)u^T_{xt}(t)=0\quad\forall t>0\, .
$$
In fact, one cannot take $v=u^T_t(t)$ in \eqref{weakform2} since we merely have $u^T_t(t)\in L^2(\Omega)$ but this procedure is rigorously justified by
Lemma \ref{justification}. By integrating the above identity over $(t,s)$ for some $0<t<s$, we find
$$
\frac12 \Big[\|u^T_t(t)\|_{L^2}^2+\|u^T(t)\|_{H^2_*}^2-P\|u^T_x(t)\|_{L^2}^2\Big]=\frac12 \Big[\|u^T_t(s)\|_{L^2}^2+\|u^T(s)\|_{H^2_*}^2-P\|u^T_x(s)\|_{L^2}^2\Big]
$$
\begin{equation}\label{whoknows}
+\delta\int_t^s\|u^T_t(\tau)\|_{L^2}^2d\tau +\int_t^s a(\tau)\int_\Omega u^T_x(\tau)u^T_{xt}(\tau) d\tau\, .
\end{equation}
With an integration by parts and by the H\"older inequality, \eqref{whoknows} yields the estimate
$$
\delta\int_t^s\|u^T_t(\tau)\|_{L^2}^2d\tau \le -\frac12 \Big[\|u^T_t(\tau)\|_{L^2}^2+\|u^T(\tau)\|_{H^2_*}^2-P\|u^T_x(\tau)\|_{L^2}^2\Big]_t^s+
A(t)\int_t^s\|u^T_t(\tau)\|_{L^2}\|u^T_{xx}(\tau)\|_{L^2} d\tau
$$
where $A(t):=\sup_{\tau>t}a(\tau)$. By the Young inequality and \eqref{embedding-H^2_*-u_xx} we infer that
\begin{equation}\label{firstbound}
\frac{\delta}{2}\int_t^s\|u^T_t(\tau)\|_{L^2}^2d\tau \le-\frac12 \Big[\|u^T_t(\tau)\|_{L^2}^2+\|u^T(\tau)\|_{H^2_*}^2-P\|u^T_x(\tau)\|_{L^2}^2\Big]_t^s+
\frac{A(t)^2}{2\delta\gamma}\int_t^s\|u^T(\tau)\|_{H^2_*}^2 d\tau\, .
\end{equation}

Then we take $v=u^T(t)$ in \eqref{weakform2} and obtain
\begin{equation}\label{step1}
\langle u^T_{tt}(t)u^T(t)\rangle+\delta\int_\Omega u^T_t(t)u^T(t)+\|u^T(t)\|_{H^2_*}^2+\big(a(t)-P\big)\|u^T_x(t)\|_{L^2}^2=0\quad\forall t>0\, .
\end{equation}
Consider the same $0<t<s$ as above and note that, by integrating \eqref{step1} over $(t,s)$ and using Lemma \ref{fubini}, we get (recall $a\ge0$)
$$
\int_t^s\|u^T(\tau)\|_{H^2_*}^2d\tau-P\int_t^s\|u^T_x(\tau)\|_{L^2}^2d\tau
$$
\begin{equation}\label{secondbound}
\le\int_t^s\|u^T_t(\tau)\|_{L^2}^2\, d\tau+\int_\Omega\Big[u^T_t(t)u^T(t)-u^T_t(s)u^T(s)\Big]-\frac{\delta}{2}\Big[\|u^T(\tau)\|_{L^2}^2\Big]_t^s\, .
\end{equation}

By combining \eqref{firstbound} with \eqref{secondbound} we infer that
$$
\left(\frac{\delta}{2}-1\right)\int_t^s\|u^T_t(\tau)\|_{L^2}^2d\tau
+\left(1-\frac{A(t)^2}{2\delta\gamma}\right)\int_t^s\|u^T(\tau)\|_{H^2_*}^2d\tau-P\int_t^s\|u^T_x(\tau)\|_{L^2}^2d\tau
$$
\begin{equation}\label{mother}
\le\left[-\frac{\|u^T_t(s)\|_{L^2}^2}2 -\frac{\|u^T(s)\|_{H^2_*}^2}2 +\frac{P\|u^T_x(s)\|_{L^2}^2}2-\int_\Omega\Big[u^T_t(s)u^T(s)\Big]-\frac{\delta}{2}\|u^T(s)\|_{L^2}^2\right]
\end{equation}
$$
+\left[\frac{\|u^T_t(t)\|_{L^2}^2}2 +\frac{\|u^T(t)\|_{H^2_*}^2}2 -\frac{P\|u^T_x(t)\|_{L^2}^2}2+\int_\Omega\Big[u^T_t(t)u^T(t)\Big]+\frac{\delta}{2}\|u^T(t)\|_{L^2}^2\right]\, .
$$
Since $\delta>1$ the second line of \eqref{mother} is negative while the third line is upper bounded by
$$
\|u^T_t(t)\|_{L^2}^2 +\frac{\|u^T(t)\|_{H^2_*}^2}2 +\frac{\delta+1}{2}\|u^T(t)\|_{L^2}^2\, .
$$
Therefore, by recalling \eqref{improvedtorsional}, \eqref{mother} yields (for all $t>0$)
\begin{equation}\label{psifinal}
\left(\frac{\delta}{2}-1\right)\int_t^s\|u^T_t(\tau)\|_{L^2}^2d\tau
+\left(1-\frac{A(t)^2}{2\delta\gamma}-\frac{P}{2\nu_{1,2}}\right)\int_t^s\|u^T(\tau)\|_{H^2_*}^2d\tau\le
\|u^T_t(t)\|_{L^2}^2 +\frac{\nu_{1,2}+\delta+1}{2\nu_{1,2}}\|u^T(t)\|_{H^2_*}^2\, .
\end{equation}
Since $\limsup_{t\to\infty}A(t)=a_\infty$, we may take $t$ sufficiently large, say $t\ge\rho$, in such a way that
$$
1-\frac{A(t)^2}{2\delta\gamma}-\frac{P}{2\nu_{1,2}}\ge\varepsilon>0\qquad\forall t\ge\rho\, .
$$
Then, if we let $s\to\infty$ and we put
$$
\psi(t):=\left(\frac{\delta}{2}-1\right)\|u^T_t(t)\|_{L^2}^2+\varepsilon\|u^T(t)\|_{H^2_*}^2\, ,
$$
inequality \eqref{psifinal} implies that
\begin{equation}\label{crucial}
\int_t^\infty\psi(\tau)\, d\tau\le\frac{\psi(t)}{\kappa}\qquad\forall t\ge\rho\qquad\mbox{where}\quad
\frac{1}{\kappa}=\max\left\{\frac{2}{\delta-2},\frac{\nu_{1,2}+\delta+1}{2\varepsilon\nu_{1,2}}\right\}\, .
\end{equation}

This inequality has two crucial consequences. First, we remark that
\begin{equation}\label{consequence}
\liminf_{t\to\infty}\psi(t)=0
\end{equation}
since $\psi\ge0$ and the integral in \eqref{crucial} converges. Second, we see that \eqref{crucial} readily implies
\begin{equation}\label{gronwall}
\int_t^\infty\psi(\tau)\, d\tau\le\left[e^{\kappa\rho}\int_\rho^\infty\psi(\tau)\, d\tau\right]\, e^{-\kappa t}=:C_\rho\, e^{-\kappa t}\qquad\forall t\ge\rho\, .
\end{equation}

From \eqref{consequence} we infer that there exist an increasing sequence $s_m\to\infty$ ($s_m>\rho$ for all $m$) such that
$$
\varepsilon_m:=\frac12 \Big[\|u^T_t(s_m)\|_{L^2}^2+\|u^T(s_m)\|_{H^2_*}^2-P\|u^T_x(s_m)\|_{L^2}^2\Big]\to0\qquad\mbox{as }m\to\infty\, .
$$
Then, by taking $t\in(s_{m-1},s_m)$ and $s=s_m$ in \eqref{whoknows}, we get
\begin{eqnarray*}
\frac{\|u^T_t(t)\|_{L^2}^2+\|u^T(t)\|_{H^2_*}^2-P\|u^T_x(t)\|_{L^2}^2}2 &\!\!\le\!\!& \varepsilon_m
+\delta\int_t^{s_m}\!\!\|u^T_t(\tau)\|_{L^2}^2d\tau +A(t)\int_t^{s_m}\!\!\|u^T_t(\tau)\|_{L^2}\|u^T_{xx}(\tau)\|_{L^2} d\tau\\
\mbox{(by \eqref{gronwall}) }\quad &\!\!\le\!\!& \varepsilon_m+C_1\int_t^\infty\!\!\psi(\tau)\, d\tau\le\varepsilon_m+C_1C_\rho\, e^{-\kappa t}\qquad\forall t\in(s_{m-1},s_m)\, .
\end{eqnarray*}
Since the expression on the left hand side is estimated both from above and below by a constant times $\psi(t)$,
this proves that
$$
\lim_{t\to\infty}\psi(t)=0\, .
$$
Going back to \eqref{whoknows} and by letting $s\to\infty$, this shows that
$$
\frac12 \Big[\|u^T_t(t)\|_{L^2}^2+\|u^T(t)\|_{H^2_*}^2-P\|u^T_x(t)\|_{L^2}^2\Big]\le C_1C_\rho\, e^{-\kappa t}\qquad\forall t>\rho\, .
$$
We conclude by using again the fact that the left hand side can be bounded from below by $c\psi(t)$ for a suitable constant $c>0$.\end{proof}

\begin{proof}[Proof of Theorem \ref{SCstability2}]
We may assume that $\delta^2\ge4(\lambda_1-P)$. Still denoting $g_\infty=\limsup_{t\to\infty}\|g(t)\|_{L^2}$, we infer from the second estimate in Lemma \ref{energybound-PDE}
combined with Lemma \ref{lem:L2bound}
that
$$
\limsup_{t\to \infty}\|u(t)\|^2_{L^2} < \frac{g_\infty^2 }{(\lambda_1-P)^2}
$$
since $S>0$. Now we set $a(t):=\|u_x(t)\|_{L^2}^2\ge0$ and $a_\infty:=\limsup_{t\to\infty}a(t)<\infty$. Using \eqref{uxbound}, we see that $a_\infty$ is bounded from above by
a constant depending on $g_\infty,S,P$ and $\lambda_1$. Hence it is enough to apply Lemma \ref{H37} to conclude.
\end{proof}

\section{Proof of Theorem \ref{smalltorsion}}\label{proof3}

Let $u$ be a weak solution of \eqref{enlmg} and set $a(t):=S\|u_x(t)\|_{L^2}^2$. Following \eqref{decomposition}, we write $u(t)=u^{T}(t)+u^L(t)$ and,
since this decomposition is orthogonal in $H^2_*(\Omega)$ and in $L^2(\Omega)$, we get
\begin{empheq}{align}\label{weakform-decompo}
\langle u^T_{tt},v \rangle  + \delta (u^T_t,v)_{L^2} + (u^T,v)_{H^2_*}  +\big[a(t)-P\big](u^T_x,v_x)_{L^2} \ \ & \\
+\langle u^L_{tt},v \rangle + \delta (u^L_t,v)_{L^2} + (u^L,v)_{H^2_*} +\big[a(t)-P\big](u^L_x,v_x)_{L^2}  & = (g^L,v)_{L^2},
\end{empheq}
for all $t\in[0,T]$ and all $v\in H^{2}_*(\Omega)$, as $g$ is even with respect to $y$ (so that $g=g^L$). Then
\begin{equation}\label{weaktorsion}
\langle u^T_{tt},v \rangle  + \delta (u^T_t,v)_{L^2} + (u^T,v)_{H^2_*}  +\big[a(t)-P\big](u^T_x,v_x)_{L^2}  = 0
\end{equation}
for all $t\in[0,T]$ and all $v\in H^{2}_*(\Omega)$. Indeed, splitting $v=v^{T} + v^L$,  we see from \eqref{weakform-decompo} that
\begin{multline*}
\langle u^T_{tt},v \rangle + \delta (u^T_t,v)_{L^2} + (u^T,v)_{H^2_*} +\big[a(t)-P\big](u^T_x,v_x)_{L^2}
\\ = \langle u^T_{tt},v^T \rangle + \delta (u^T_t,v^T)_{L^2} + (u^T,v^T)_{H^2_*} +\big[a(t)-P\big](u^T_x,v^T_x)_{L^2} = (g,v^T)_{L^2} = 0\,,
\end{multline*}
for all $t\in[0,T]$ and all $v\in H^{2}_*(\Omega)$.\par
Setting $w(t):=u^T(t){\rm e}^{\eta t}$ for some $\eta>0$, \eqref{weaktorsion} becomes
$$
\langle w_{tt},v \rangle+(\delta-2\eta)(w_t,v)_{L^2}+(w,v)_{H^2_*}-P(w_x,v_x)_{L^2}-\eta(\delta-\eta)(w,v)_{L^2}=a(t)(w_{xx},v)_{L^2},
$$
for all $t\in[0,T]$ and all $v\in H^{2}_*(\Omega)$. This shows that $w$ weakly solves \eqref{perturb-eq} with
$$
\delta\mbox{ replaced by }\delta-2\eta\, ,\qquad b=\eta(\delta-\eta)\, ,\qquad h(\xi,t)=a(t)w_{xx}(\xi,t)\, .
$$
Take $\eta\in(0,\delta/2)$ such that $K:=\nu_{1,2}-P-\eta(\delta-\eta)>0$ and assume that
\begin{equation}\label{assumptionasmall}
\limsup_{t\to\infty}|a(t)|^2<\frac{\gamma\, K^2}{\nu_{1,2}}\left[1+\max\left\{\frac{4K}{(\delta-2\eta)^2}\, ,\, 1\right\}\right]^{-1}
\end{equation}
where $\gamma$ is as in \eqref{embedding-H^2_*-u_xx}. From \eqref{uH2bound-lin} and \eqref{embedding-H^2_*-u_xx} we infer that
\begin{equation*}
\limsup_{t\to\infty}\|w(t)\|_{H^2_*}^2\le\frac{\nu_{1,2}}{\gamma\, K}\left(\max\Big(\frac4{(\delta-2\eta)^2}, \frac{1}{K}\Big)+\frac{1}{K}\right)
\limsup_{t\to\infty}|a(t)|^2\cdot\limsup_{t\to\infty}\|w(t)\|_{H^2_*}^2
\end{equation*}
which, together with \eqref{assumptionasmall}, proves that
$$
\lim_{t\to\infty}\|w(t)\|_{H^2_*}^2=0\, .
$$
Then we deduce from \eqref{utbound-lin} that
$$
\limsup_{t\to\infty}\|w_t(t)\|_{L^2}^2\le \frac{2}{K}\limsup_{t\to\infty}|a(t)|^2 \|w_{xx}(t)\|_{L^2}^2 = 0.
$$
Finally, undoing the change of unknowns and back to $u$, we infer that
$$\lim_{t\to\infty} {\rm e}^{\eta t}\left(\|u^T(t)\|_{H^2_*}^2+\|u^T_t(t)\|_{L^2}^2\right)=0,$$
which proves the statement.

\bibliographystyle{abbrv}

\end{document}